\documentclass[11pt,reqno]{amsart}
\usepackage{amsmath,amsfonts,amssymb,amscd,amsthm,amsbsy,bbm,epsf,calc,graphicx}
\usepackage{color}
\usepackage{datetime}
\usepackage{gensymb}
\usepackage{wasysym}
\usepackage[pdftex,bookmarks,colorlinks,breaklinks]{hyperref}

\textwidth=6.25truein
\textheight=8.5truein
\hoffset=-0.65truein
\voffset=-.5truein


\numberwithin{equation}{section}

\newcounter{hours}\newcounter{minutes}

\theoremstyle{plain}
\newtheorem{thm}{Theorem}[section]

\newtheorem{lem}[thm]{Lemma}
\newtheorem{cor}[thm]{Corollary}
\newtheorem{prop}[thm]{Proposition}


\theoremstyle{definition}                  
\newtheorem{rem}[thm]{Remark}
\newtheorem{rems}{Remarks}


\def\tr{\textnormal{tr}}
\def\dive{\textnormal{div}}

\def\Id{\mathbb{I}}
\def\Tr{\textnormal{Tr}}

\def\maxw{\mathcal{M}} 
\def\fin{f_{\textnormal{in}}} 

\def\QL{Q_{\mathcal{L}}}
\def\QKS{Q_{\mathcal{KS}}}


\newcommand{\R}{\mathbb{R}}


	\title{Estimates for radial solutions of the homogeneous Landau equation with Coulomb potential}
    \author{Maria Gualdani and Nestor Guillen}

\begin{document}

	\begin{abstract}
		Motivated by the question of existence of global solutions, we obtain pointwise upper bounds for radially symmetric and monotone solutions to the homogeneous Landau equation with Coulomb potential. The estimates say that blow up in the $L^\infty(\R^3)$-norm at a finite time $T$ can occur only if the $L^{3/2}(\R^3)$-norm of the solution concentrates for times close to $T$.  The bounds are obtained using the comparison principle for the Landau equation and for the associated mass function. 
		
		This method provides long-time existence results for the isotropic version of the Landau equation with Coulomb potential, recently introduced by Krieger and Strain. 
		
			\end{abstract}

\maketitle
    \markboth{M. Gualdani, N. Guillen}{Estimates for radial solutions of the homogeneous Landau equation with Coulomb potential}

\section{Introduction}\label{sec: introduction}

 This manuscript is concerned with the Cauchy problem for the homogeneous Landau equation: such equation takes the general form
  \begin{align}\label{eqn: Landau_g}
         \partial_t f (v,t) = Q(f,f), \quad f(v,0)=f_{in}(v),\quad v\in \R^3, \quad t>0,
    \end{align}
    where $Q(f,f)$ is a quadratic operator known as the Landau collisional operator
        \begin{align}\label{kernel_landau}
Q(f,f)=\textrm{div}\left( \int_{\R^3} A(v-y)\left( f(y)\nabla_v f(v)-f(v)\nabla_y f(y)\right)dy\right).
\end{align} 
The term $A(v)$ denotes a positive and symmetric matrix   

    \begin{equation*}
         A(v) := \frac{1}{8\pi}\left(\textrm{Id} - \frac{v \otimes v}{|v|^2}\right) \varphi(|v|),\;\;\; v\neq 0,
    \end{equation*}
which acts as the projection operator onto the space orthogonal to the vector $v$. The function $\varphi(|v|)$ is a scalar valued function determined from the original Boltzmann kernel describing how particles interact. If the interaction strength between particles at a distance $r$ is proportional to ${r^{-s}}$, then 
\begin{align}\label{potential}
\varphi(|v|) := |v|^{\gamma+2},\quad \gamma = \frac{(s-5)}{(s-1)}.
\end{align}

Any solution to  \eqref{eqn: Landau_g}-\eqref{kernel_landau} is an integrable and nonnegative scalar field $f(v,t): \R^3 \times [0,T] \to \R^+$. Equation \eqref{eqn: Landau_g} describes the evolution of a plasma in spatially homogeneous regimes, which means that the density function $f$ depends only on the velocity component $v$.  Landau's original intent in deriving this approximation was to make sense of the Boltzmann collisional operator, which always diverges when considering purely grazing collisions.\\

The Cauchy problem for \eqref{eqn: Landau_g}-\eqref{potential} is very well understood for the case of hard potentials, which correspond to $\gamma\ge 0$ above. Desvillettes and Villani showed the existence of global classical solutions for hard potentials and studied its long time behavior, see \cite{DesVil2000a,DesVil2000b, Villani-Handbook02} and references therein. In this case there is a unique global smooth solution, which converges exponentially to an equilibrium distribution, known as the Maxwellian function 
\begin{align*}
\maxw(v) = \frac{1}{(2\pi)^{3/2}}e^{-\frac{|v|^2}{2}}.
\end{align*}

Analyzing the soft potentials case, $\gamma <0$, has proved to be more difficult. Using a probabilistic approach, Fournier and Guerin \cite{FG09} obtained uniqueness and existence of weak solutions for the case of moderately soft potentials ($\gamma \in [-2,0]$), uniqueness was also shown to hold for bounded solutions in the Coulomb case in \cite{Fou2010}. On the other hand, in recent work of Alexnadre, Liao and Lin \cite{ALL13} it is proved -for moderately soft potentials- that the $L^2$ of the solution remains bounded for finite times, as long as the initial data is in $L^2$. For $\gamma \in [-3,-2]$, short time existence is known (going back to work of Arsenev and Peskov \cite{AP77} for initial data in $L^\infty$, see also \cite{ALL13} ) as well as global existence under a smallness $L^2$ assumption on the initial configuration, as proved by Wu \cite{Wu13}. 
   In a different direction, Villani \cite{V98} introduced the so called $H$-solutions, which enjoy (weak) a priori bounds in a weighted Sobolev space. However, the issue of their uniqueness and regularity (i.e. no finite time break down occurs) has remained open, even for smooth initial data: see  \cite[Chapter 1, Chapter 5]{Villani-Handbook02} for further discussion.\\
   
   Guo in \cite{Guo02} employs a completely different approach based on perturbation theory for the existence of periodic solutions to the spatially inhomogeneous Landau equation in $\mathbb{R}^3$. He shows that if the initial data is sufficiently close to the unique equilibrium in a certain high Sobolev norm then a unique global solution exists. Moreover, as remarked in  \cite{Guo02}, this approach also extends to the case of potentials \eqref{potential} where $\gamma$ might even take values below $-3$.

{Due to the lack of a global well-posedness theory,  several conjectures about possible finite-time blow up for general initial data have been made throughout the years.} In  \cite{Villani-Handbook02} Villani discussed the possibility that \eqref{eqn: Landau_g}-\eqref{potential} could blow up for $\gamma =-3$. Note that for smooth solutions \eqref{eqn: Landau_g}-\eqref{potential} with $\gamma =-3$ can be rewritten as
\begin{align}\label{eqn: Landau}
\partial_t f = \textrm{div}( A[f]\nabla f -f \nabla a[f])= \Tr(A[f]D^2f) + f^2,
\end{align}
where 
\begin{align*}
A[f] := A(v)\ast f = \frac{1}{8\pi |v|}\left( \textrm{Id} - \frac{v \otimes v}{|v|^2}\right)\ast f , \quad \Delta a = -f.
\end{align*}

Equation \eqref{eqn: Landau} can be thought of as a quasi-linear nonlocal heat equation. Supports for blow-up conjectures were given by the fact that  
\eqref{eqn: Landau} is reminiscent of the well studied semilinear heat equation
    \begin{equation}\label{eqn: nonlinear heat equation}
         \partial_t f = \Delta f + f^2,	
    \end{equation}
Blow up for \eqref{eqn: nonlinear heat equation} in $L^\infty$ implies blow up of every $L^p$ norm with $p>3/2 $, as shown for instance in work of Giga and Kohn \cite{GigaKohn85}, where the singularities are studied using self-similar variables. 

However, despite the apparent similarities, equation \eqref{eqn: Landau} behaves differently than \eqref{eqn: nonlinear heat equation}. The Landau equation admits a richer class of equilibrium solution: every Maxwellian $\maxw$ solves $Q(\maxw,\maxw) = 0$ which holds, in particular, for those with arbitrarily large mass.\\

  From a different perspective, Krieger-Strain \cite{KriStr2012} considered an isotropic version of \eqref{eqn: Landau}
    \begin{equation}\label{eqn: Krieger Strain alpha}
         \partial_t f = a[f]\Delta f + \alpha f^2,\;\;\alpha\in [0,1],
    \end{equation}
    and showed global existence of smooth radial solutions starting from radial initial data when $\alpha<2/3$. This range for $\alpha$ later was expanded to any $\alpha<74/75$ by means of a non-local inequality obtained by Gressman, Krieger and Strain \cite{GreKriStr2012}. Note that when $\alpha=1$, the above equation can be written in divergence form,	
	\begin{equation}\label{eqn: Krieger Strain}
         \partial_t f = \textrm{div}( a[f]\nabla f -f \nabla a[f]).	
	\end{equation}
These results put in evidence how a non-linear equation with a non-local diffusivity such as \eqref{eqn: Krieger Strain} behaves drastically different (and better) than \eqref{eqn: nonlinear heat equation}. \\
    
 Our main results in this manuscript are twofold. The first one gives necessary conditions for the finite time blow up of solutions to \eqref{eqn: Landau}. The second (unconditional) result says that solutions to \eqref{eqn: Krieger Strain} do not blow up at all. Both theorems deal only with radially symmetric, decreasing initial conditions.
    
   On the initial condition $f_{in}$ we make the following assumption: for some $p>6$, $f_{in}$ satisfies
    \begin{equation}\label{eqn: initial assumptions}
    	  \begin{array}{c}
    	  	  \fin \in L^1(\mathbb{R}^3) \cap L^p_{\textrm{weak}}(\mathbb{R}^3) ,\quad f_{in}\ge 0, \quad \| f_{in}\|_{L^1(\R^3)} =:M_{in},\\
		                 \fin \textnormal{ radially symmetric, monotone decreasing.}       
    	  \end{array} 
    \end{equation}

  The main results are the following.

  \begin{thm}\label{thm: conditions for Landau blowup}
    Let $\fin$ be as in \eqref{eqn: initial assumptions}. Then there is a smooth radial solution $f(v,t)$ of \eqref{eqn: Landau} and of (\ref{eqn: Krieger Strain}) defined in $\R^3 \times [0,T_0)$ for a positive (possibly infinite) $T_0$. Moreover, $T_0$ is maximal in the sense that either $T_0 = \infty$ or the $L^{3/2}$ norm of $f$ accumulates at $T=T_0$, in particular
    \begin{equation*}
        \lim \limits_{t \to T_0^-} \|f(\cdot,t)\|_{L^{p}(B_1)} = \infty,\;\;\forall\; p>3/2.
    \end{equation*}
    
  \end{thm}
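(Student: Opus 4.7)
My plan is to prove the theorem for both equations (\ref{eqn: Landau}) and (\ref{eqn: Krieger Strain}) in parallel, in three stages: local existence of a smooth radial decreasing solution; reduction to a nonlocal scalar equation for the mass function $M(r,t) := \int_{B_r} f(\cdot,t)\,dv$; and a comparison-based continuation criterion.

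For local existence, the idea is to regularize the operators --- mollifying the kernels defining $A[f]$ and $a[f]$, approximating the (possibly unbounded) initial datum $\fin$ by smooth bounded radial decreasing data, and adding a small viscous term $\varepsilon\Delta f$ --- and then apply classical quasilinear parabolic theory to the approximate equations. Because the operators commute with rotations, radial initial data produce radial solutions. The monotone-decreasing property is propagated by the maximum principle applied to the equation satisfied by $\partial_r f$, which is linear in $\partial_r f$ with smooth coefficients away from the origin; at $r=0$ we have $\partial_r f = 0$ by smoothness and at infinity $f\to 0$. Passing to the limit in the regularization and invoking the standard parabolic continuation criterion yields a maximal smooth radial decreasing solution on $[0,T_0)$, extendible so long as $\|f(\cdot,t)\|_{L^\infty(\R^3)}$ stays bounded.

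The heart of the theorem is the blow-up alternative. I would derive a one-dimensional parabolic equation for $M(r,t)$ associated with each of the two equations. In both cases the principal coefficient is nonnegative --- an eigenvalue of $A[f]$ for (\ref{eqn: Landau}), or $a[f]$ itself for (\ref{eqn: Krieger Strain}) --- and the lower-order terms are functionals of $M$ via the Newtonian potential, for which on radial data $\partial_r a[f](r,t) = -M(r,t)/(4\pi r^2)$. These reduced equations have maximum-principle structure, so one can compare against carefully chosen supersolutions. The key input is that a uniform bound $\|f(\cdot,t)\|_{L^p(B_1)}\le C$ with $p>3/2$ translates, via radial monotonicity, into the pointwise bound $M(r,t)\le C r^{3-3/p}$ on $r\in[0,1]$, with exponent strictly greater than $1$. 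This supercritical decay at the origin is precisely what is needed to construct a supersolution $\bar M(r,t)$ dominating $M$ on $[0,T_0)$, which in turn yields the bound $f(0,t)=\|f(\cdot,t)\|_{L^\infty(\R^3)}\le C$. Consequently, if $T_0 < \infty$ then the $L^p(B_1)$ norm of $f$ must diverge for every $p > 3/2$.

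I expect the main obstacles to be (i) the construction of an explicit, sufficiently sharp supersolution $\bar M$ for the mass equation, requiring a delicate balance between the nonlocal diffusion and the quadratic reaction at the critical $L^{3/2}$ scaling, and (ii) the rigorous justification of comparison for an equation whose coefficients depend nonlocally on the unknown through $a[f]$ and, in the Landau case, through the anisotropic eigenvalues of $A[f]$. The latter is typically handled by a Gronwall-type estimate on $M-\bar M$ that exploits the monotone dependence of $a[f]$ on $M$, together with careful control near $r=0$ where the reduced equation has a singular coefficient.
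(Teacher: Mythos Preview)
Your proposal conflates two distinct arguments in the paper and, as written, has a real gap for the Landau case.

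For Theorem~\ref{thm: conditions for Landau blowup} the paper does \emph{not} use the mass-function comparison. It instead runs a barrier argument directly on $f$: Proposition~\ref{lem: classical radial barriers} computes $Q(g,U_\gamma)$ for $U_\gamma(v)=|v|^{-\gamma}$ and shows, using $A^*[h]\le \tfrac{1}{3}a[h]$, that for both $\QL$ and $\QKS$
\[
Q(g,U_\gamma)\le U_\gamma\bigl(-\tfrac{1}{3}\gamma(1-\gamma)a[g]|v|^{-2}+g\bigr).
\]
Lemma~\ref{lem: propagation of Lp bounds} then uses the assumed uniform integrability of $g^{3/2}$ (which is exactly what an $L^p$ bound with $p>3/2$ provides) to make $U_\gamma$ a genuine supersolution in a small ball; this yields $f\lesssim |v|^{-\gamma}$ with $\gamma<1/2$, hence a uniform $L^6$ bound, and finally the Stampacchia estimate (Theorem~\ref{thm: Stampacchia}) upgrades this to $L^\infty$ (Lemma~\ref{lem: L3/2 norm controls blow up}).

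Your route through the mass equation runs into two concrete obstacles. First, a bound $M(r,t)\le Cr^{3-3/p}$ with $3-3/p<3$ does \emph{not} give $f(0,t)\le C$: it only gives $f(r,t)\lesssim r^{-3/p}$, so you still need a separate mechanism to reach $L^\infty$. The paper supplies exactly this mechanism via the $|v|^{-\gamma}$ barrier plus Stampacchia; your outline omits it. Second, and more seriously for \eqref{eqn: Landau}, the mass equation \eqref{2} has diffusivity $A^*$ rather than $a$. The supersolution computation (Proposition~\ref{prop:supersol KS}) hinges on $\tfrac{M_f}{8\pi r}\le \tfrac{1}{2}a[f]$, which is automatic; for the Landau mass equation one would need $\tfrac{M_f}{8\pi r}$ controlled by $A^*$, and since $A^*\le \tfrac{1}{3}a$ this fails in general. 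The paper makes this limitation explicit: mass comparison is used only for $\QKS$ (Theorem~\ref{thm: global existence KS}), and the corresponding statement for Landau appears only as a \emph{conditional} corollary at the end of Section~\ref{mass_comparison}. Your obstacle~(ii) is therefore not a technicality but the essential reason the paper takes a different route for Theorem~\ref{thm: conditions for Landau blowup}.
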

 
  This certainly does not yield long time existence of classical solutions to \eqref{eqn: Landau}. However, the ideas used in proving Theorem \ref{thm: conditions for Landau blowup} can at least guarantee long time existence for \eqref{eqn: Krieger Strain}. 

 \begin{thm}\label{thm: global existence KS}
   Let $\fin$ be as in \eqref{eqn: initial assumptions}, then there is $f:\mathbb{R}^3\times \mathbb{R}_+\to\mathbb{R}$, smooth for positive times, solving \eqref{eqn: Krieger Strain} and such that $f(\cdot,0)=f_{in}$.
 \end{thm}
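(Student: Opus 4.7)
My plan is to combine the short-time existence and blow-up criterion of Theorem~\ref{thm: conditions for Landau blowup} with a uniform-in-time pointwise upper bound on $f$ obtained via the comparison principle for the mass function. The central object is $M(r,t)=\int_{B_r(0)} f(v,t)\,dv$, and radial monotonicity yields the key pointwise inequality $f(r,t)\le 3M(r,t)/(4\pi r^3)$, which converts any bound on $M$ into a bound on $f$.

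First I would rewrite \eqref{eqn: Krieger Strain} as a scalar PDE for $M(r,t)$. Integrating the divergence-form equation over $B_r$ and using the identities $\partial_r a[f](r)=-M(r,t)/(4\pi r^2)$ and $f=M_r/(4\pi r^2)$, one obtains
\begin{equation*}
\partial_t M \;=\; a\,M_{rr} \;-\; \frac{2a}{r}\,M_r \;+\; \frac{M\,M_r}{4\pi r^2},\qquad a(r,t)=\int_r^\infty \frac{M(s,t)}{4\pi s^2}\,ds,
\end{equation*}
a degenerate parabolic equation with a nonlocal but monotone-in-$M$ coefficient. On the natural class of radial monotone-decreasing profiles ($r^{-2}M_r$ nonincreasing), I expect this equation to inherit the comparison principle already employed in the proof of Theorem~\ref{thm: conditions for Landau blowup}.

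The main step is then to construct a time-independent supersolution $\bar M(r)$ dominating the initial mass. A natural ansatz is $\bar M(r)=C r^\alpha$ for $r\le R$, matched to the constant $M_{in}$ beyond, with $\alpha:=3-3/p\in(5/2,3)$ dictated by $f_{in}\in L^p_{\mathrm{weak}}$, $p>6$. Writing $a_0:=a[\bar M](0)>0$, a direct substitution near the origin gives
\begin{equation*}
a\,\bar M_{rr} - \frac{2a}{r}\bar M_r + \frac{\bar M \bar M_r}{4\pi r^2} \;=\; a_0\, C\,\alpha(\alpha-3)\,r^{\alpha-2} + O(r^{2\alpha-3}),
\end{equation*}
which is strictly negative since $\alpha<3$ and $2\alpha-3>\alpha-2$; thus $\bar M$ is a supersolution near $r=0$ with room to spare, and by choosing $C$ large enough it dominates $M(r,0)$ thanks to the weak-$L^p$ control on $f_{in}$. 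The supersolution property for $r\ge R$ reduces to checking the sign of the remaining transport term once $\bar M$ is constant, which is straightforward.

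Applying the comparison principle gives $M(r,t)\le \bar M(r)$ uniformly in $t$, and hence
\begin{equation*}
f(r,t)\;\le\;\frac{3\bar M(r)}{4\pi r^3}\;\le\;C\,r^{-3/p},\qquad r>0.
\end{equation*}
In particular $\|f(\cdot,t)\|_{L^q(B_1)}$ is bounded for all $q<p$, and thus for $q=3/2$. Theorem~\ref{thm: conditions for Landau blowup} then forces $T_0=\infty$, and standard parabolic bootstrap on the uniformly parabolic equation $\partial_t f = a[f]\Delta f + f^2$ (once $a[f]\ge c(R,t)>0$ on bounded sets and $f$ is pointwise controlled) yields the claimed smoothness for positive times.

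The main obstacle I anticipate is the rigorous justification of the comparison principle: the coefficient $a[M]$ is nonlocal in $M$, the equation degenerates at $r=0$ (with the implicit boundary condition $M(0,t)=0$), and the radial-decreasing class must be preserved under both exact and approximate flows. A secondary difficulty is approximating $f_{in}$ by smooth, compactly supported, radial-decreasing data in such a way that both the short-time existence from Theorem~\ref{thm: conditions for Landau blowup} and the supersolution bound are uniform, so that passage to the limit recovers a solution with the original initial datum.
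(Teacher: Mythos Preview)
Your overall strategy---pass to the mass function, find a power-law supersolution, and invoke the blow-up criterion---is exactly the paper's. But two concrete choices in your execution do not work.

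\textbf{The exponent range.} The paper freezes the coefficients at the actual solution $f$: one defines the \emph{linear} operator
\[
Lh \;=\; \partial_t h - a[f]\,\partial_{rr}h - \tfrac{2}{r}\bigl(\tfrac{M_f}{8\pi r}-a[f]\bigr)\partial_r h,
\]
and checks that $h=r^m$ is a supersolution. A direct calculation, together with the structural inequality $a[f](r)\ge M_f(r)/(4\pi r)$, gives $Lh \ge m(2-m)\,a[f]\,r^{m-2}$, which is nonnegative precisely when $m\le 2$. The paper therefore takes $m=1+\alpha\in(1,2)$, which already suffices to place $f$ in $L^{p'}(B_1)$ for some $p'>3/2$. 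Your choice $\alpha=3-3/p\in(5/2,3)$ lies outside this admissible range: if you rerun the contact-point computation with the correct coefficient $a[f]$ (not $a[\bar M]$), the lower bound $a[f](r_0)\ge M_f(r_0)/(4\pi r_0)=C r_0^{\alpha-1}/(4\pi)$ yields
\[
a[f]\,\bar M_{rr}-\tfrac{2a[f]}{r}\bar M_r+\tfrac{\bar M\bar M_r}{4\pi r^2}
\;\le\; \tfrac{C^2\alpha}{4\pi}(\alpha-2)\,r_0^{2\alpha-3},
\]
which has the wrong sign once $\alpha>2$. There is no need (and no gain) in matching the exponent to the full weak-$L^p$ regularity of $f_{in}$; any $m\in(1,2)$ works and gives enough.

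\textbf{The nonlocal comparison.} Treating the equation as fully nonlinear with coefficient $a[\bar M]$ creates the very difficulty you flag at the end. At a first contact point one has $M\le\bar M$ globally, hence $a[f]\le a[\bar M]$; since the diffusive part $a\cdot C\alpha(\alpha-3)r^{\alpha-2}$ is negative, replacing $a[\bar M]$ by the \emph{smaller} $a[f]$ makes the right-hand side \emph{larger}, so your ``near the origin'' estimate with $a_0=a[\bar M](0)$ does not transfer. The paper sidesteps this entirely by viewing $L$ as a linear operator with coefficients determined by $f$; then the comparison principle is classical and no monotonicity of $a$ in $M$ is needed. Your asymptotic argument is also local in $r$, whereas the contact point $r_0$ is not known to be small.

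In short: keep your plan, but take the barrier $Cr^{m}$ with $m\in(1,2]$, and compare against the \emph{linear} equation with coefficients $a[f]$, $M_f$ frozen.
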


    

We approach the analysis from the point of view of nonlinear parabolic equations. The nonlocal dependence of the coefficients on the solution {prevents the equation from satisfying a comparison principle} : if $v_0$ is a contact point of two functions $f$ and $g$, i.e. $f(v_0) = g(v_0)$ and everywhere else $f(v) < g(v)$, it does not hold that $Q(f,f)(v_0)\leq Q(g,g)(v_0)$. More precisely, one cannot expect an inequality such as
    \begin{equation*}
         \Tr(A[f]D^2f)(v_0) \leq \Tr(A[g]D^2g ) (v_0).	
    \end{equation*}
   In fact due to the nonlocality of $A$ one only has $A[f](v_0)\le A[g](v_0)$. Equality $A[f] (v_0) = A[g](v_0) $ holds only when $f\equiv g$ for every $v\in \R^3$. In addition, also {maximum principle does not hold}, since at a maximum point for $f$ we only obtain $\partial_t f\leq -f \Delta a[f]$, which does not rule out finite time blow up of the maximum of $f$. However, even if maximum and comparison principle fail, if one can control in some $L^p$ space the size of $a[f]$ and the ellipticity (i.e. eigenvalues) of the matrix $A[f]$, then higher regularity for $f$ via a bootstrapping effect can be shown. Thus, {the problem of regularity estimates for \eqref{eqn: Landau} becomes a question of bounding $f$ in some $L^p$ norm}, with $p>1$. \\
    
        A previous attempt by the authors that meant to cover a much more general situation (global existence for bounded, fast decaying initial data), was ultimately undone by a computational error. However we kept the main idea of barrier arguments to show  global existence results for (\ref{eqn: Krieger Strain}) and conditional existence for the Landau equation (\ref{eqn: Landau}).


     \subsection{Outline} The rest of the paper is organized as follows. After a brief review in Section \ref{sec: basic properties linear equation} on nonlinear parabolic theory that will be needed to construct local solutions to the non-linear problems,  in Section \ref{sec: radial symmetry} we outline the symmetry properties of \eqref{eqn: Landau}. Section \ref{short_time_existence} deals with short time existence. In Section \ref{sec: contact analysis} we present a barrier argument that will allow to prove conditional non-blow up results for the Landau equation and global well-posedness for the Krieger-Strain equation in Section \ref{mass_comparison}.

    \subsection{Notation} Universal constants will be denoted by $c,c_0,c_1,C_0,C_1,C$. Vectors in $\mathbb{R}^3$ will be denoted by $v,w,x,y$ and so on, the inner product between $v$ and $w$ will be written $(v,w)$. $B_R(v_0)$ denotes closed ball of radius $R$ centered at $v_0$, if $v_0=0$ we simply write $B_R$. 
    The identity matrix will be noted by $\mathbb{I}$, the trace of a matrix $X$ will be denoted $\Tr(X)$.  The initial distribution for the Cauchy problem will always be denoted by $\fin$.

The letter $\Omega$ denotes a general compact subset of $\R^3$. $Q\subset \mathbb{R}^3\times \mathbb{R}_+$ is a space-time cylinder of parabolic diameter $R$ with $R>0$ a general constant, unless otherwise specified.   $\partial_p Q$ denotes the parabolic boundary of $Q$.

\subsection{Acknowledgements.} MPG is supported by NSF DMS-1310746 and DMS-1412748. NG is supported by NSF-DMS 1201413. The authors would like to thank MSRI for the hospitality during the program Free Boundary Problems, Theory and Applications in the Spring of 2011, where this work was started. We also would like to thank Luis Silvestre and Cedric Villani for many fruitful communications.  MPG would like to thank NCTS Mathematics Division Taipei for the kind hospitality.

\section{A rapid review of linear parabolic equations}\label{sec: basic properties linear equation}

	We will work with two bilinear operators, namely the one associated to the equation
	  \begin{equation*}
	    \QL(g,f) := \dive( A[g]\nabla f-f\nabla a[g]) = \tr[A[g]D^2f]+fg,
	  \end{equation*}
	  and the one associated to the equation of Krieger and Strain,
	  \begin{equation*}
	    \QKS(g,f) := \dive( a[g]\nabla f-f\nabla a[g]) = a[g]\Delta f+fg.
	  \end{equation*}  
    As it is well known, through $\QL$ (and also $\QKS$) any $g:\mathbb{R}^3\times \mathbb{R}_+ \to \mathbb{R}$, gives rise to a linear elliptic operator with variable coefficients, as follows:
    \begin{align*}
         \phi \to \QL(g,\phi) & :=\dive(A[g]\nabla \phi- \phi\nabla a[g])= \tr(A[g]D^2 \phi)+ \phi g,\\
         \phi \to \QKS(g,\phi) & :=\dive(a[g]\nabla \phi- \phi\nabla a[g])= a[g]\Delta \phi+ \phi g.		 
    \end{align*}
    Accordingly, given such a $g$ and initial data $\fin$, one considers the linear Cauchy problem,
    \begin{equation}\label{eqn: approximate Linear Cauchy problem}
	     \left \{ \begin{array}{rl}
	          \partial_t f & = Q(g,f) \text{ in } \mathbb{R}^3 \times \mathbb{R}_+,\\
	          f(\cdot,0) & =  \fin,
	     \end{array}\right.	
	\end{equation}
    both when $Q=\QL$ or $Q=\QKS$.
  \begin{rems}\label{decay_mass}
    Note that $\QL(g,f)$ and $\QKS(g,f)$ can both be expressed as a divergence, so any solution to \eqref{eqn: approximate Linear Cauchy problem} preserves its mass over time, i.e. $\|f(\cdot,t)\|_{L^1(\mathbb{R}^3)}=\|f_{in}(\cdot)\|_{L^1(\mathbb{R}^3)}=:M_{in}$ for all $t>0$. 
  \end{rems}

    \begin{lem}\label{lem: existence uniqueness regularity for linear problem}
        Let $\fin :\mathbb{R}^3\to\mathbb{R},\;\; g:\mathbb{R}^3\times \mathbb{R}_+ \to \mathbb{R}$ be non-negative functions such that 
        \begin{equation}\label{init_data_c_beta}
            \begin{array}{l}
                \fin \in L^1(\mathbb{R}^3) \cap C^{2,\beta}(\mathbb{R}^3),\\
                g \in L^\infty(\mathbb{R}_+, L^1(\mathbb{R}^3))\cap C^{\beta}(\mathbb{R}^3 \times \mathbb{R}_+),\;\;   \textnormal{ for some } \beta \in (0,1).    
            \end{array}
        \end{equation}
        For any $\delta>0$, there exists a unique $f:\mathbb{R}^3\times \mathbb{R} \to \mathbb{R}$ which is a classical solution of
        \begin{equation}
            \left \{ \begin{array}{rl}
                \partial_t f & = \delta \Delta f+Q(g,f) \textnormal{ in } \mathbb{R}^3\times \mathbb{R}_+,\\
                f(\cdot ,0) & = \fin,     
            \end{array}\right.
        \end{equation}
        where $Q(\cdot,\cdot)$ denotes either $Q=\QL$ or $Q=\QKS$. 
    \end{lem}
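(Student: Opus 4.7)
The plan is to view the regularized equation as a linear parabolic equation with variable coefficients whose regularity is inherited from the prescribed field $g$, and then to invoke classical Schauder theory. Since $g$ is given, the problem is genuinely linear, and the work splits into: (i) checking that the second-order coefficient is uniformly elliptic and Hölder continuous; (ii) checking that the lower-order term and initial datum fall in the classical setting; (iii) building a solution on all of $\R^3\times\R_+$ out of solutions on truncated cylinders.

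First I would rewrite the equation in non-divergence form. In the Landau case,
\begin{equation*}
\partial_t f = \tr\bigl((\delta\Id+A[g])D^2 f\bigr) + fg,
\end{equation*}
and in the Krieger--Strain case, $\partial_t f = (\delta+a[g])\Delta f + fg$. In either case the diffusion matrix dominates $\delta\Id$, so the equation is uniformly parabolic with ellipticity constant at least $\delta$. The Hölder regularity of the variable coefficients follows from classical estimates for Newtonian-type potentials: the kernel $|v|^{-1}$ is locally integrable in $\R^3$, the $L^\infty_t L^1_v$ bound on $g$ controls $\|a[g]\|_{L^\infty}$ as well as the decay at infinity, and the $C^\beta$ hypothesis on $g$ together with Hölder theory for Riesz potentials yields $a[g],\nabla a[g],A[g]\in C^\beta(\R^3\times\R_+)$, with norms depending only on $\|g\|_{L^\infty_t L^1_v}$ and $\|g\|_{C^\beta}$. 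A short pigeonhole argument also shows that a uniformly $C^\beta$ function lying in $L^1(\R^3)$ is automatically bounded, so $g\in L^\infty\cap C^\beta$ and the zeroth-order term $fg$ sits in the classical framework.

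With these facts in hand, I would solve the Dirichlet problem on bounded cylinders $B_R\times[0,T]$ with $\fin$ cut off smoothly outside $B_R$ and zero boundary data. Classical parabolic Schauder theory (as in Ladyzenskaja--Solonnikov--Uralceva or Lieberman) produces a unique $C^{2+\beta,1+\beta/2}$ solution $f_R$ on each such cylinder, and interior Schauder estimates on any compact subcylinder give bounds on $\|f_R\|_{C^{2+\beta,1+\beta/2}}$ independent of $R$. A diagonal Arzelà--Ascoli extraction then yields a classical solution on $\R^3\times[0,T]$ for every $T<\infty$, hence on $\R^3\times\R_+$. Uniqueness follows by applying the maximum principle to the difference of two solutions, after the standard transformation $f\mapsto e^{-Ct}f$ which absorbs the bounded zeroth-order coefficient, together with the strict parabolicity coming from $\delta>0$.

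The step I expect to be most delicate is the passage to the limit $R\to\infty$: one needs interior Schauder constants that are independent of $R$ and a uniqueness theorem on $\R^3$ that applies within the relevant class. The first requirement is satisfied because the coefficients $A[g]$ and $a[g]$ are defined globally and their $C^\beta$ norms depend only on $g$. The second follows from the decay of $a[g]$ and $A[g]$ at infinity induced by the $L^1$ bound on $g$, which lets one control the behaviour of $f$ at infinity and apply the standard uniqueness statement for the Cauchy problem on $\R^3$.
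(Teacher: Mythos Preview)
Your proposal is correct and is precisely the standard argument one would invoke here. Note, however, that the paper does not actually supply a proof of this lemma: it is stated in the section titled ``A rapid review of linear parabolic equations'' as a known fact, alongside the De Giorgi--Nash--Moser, Stampacchia, and Schauder estimates, all of which are simply quoted with references to Lady\v{z}enskaja--Solonnikov--Ural'ceva and Lieberman. Your sketch---non-divergence form, uniform ellipticity from $\delta\Id$, H\"older regularity of $A[g]$ and $a[g]$ from potential estimates, solution on truncated cylinders followed by interior Schauder bounds and Arzel\`a--Ascoli, uniqueness via the maximum principle---is exactly the route those references take and is what the paper is implicitly appealing to.
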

    
   Next recall several parabolic regularity estimates dealing with equations of the form
	\begin{align*}
	  \partial_tf  = \dive\left ( B\nabla f + f b \right ),	
	\end{align*}
	where $f: Q \to \mathbb{R}$ and $Q = B_R(x_0)\times (t_0-R^2,t_0) \subset \mathbb{R}^d\times \mathbb{R}$ is the parabolic cylinder of radius $R$ centered at some points $x_0,t_0$. The first two theorems are respectively a local H\"older estimate (De Giorgi-Nash-Moser) and a $L^\infty$ estimate for $f$ in terms of its boundary data (Stampacchia estimate), see \cite[Chapter III, Theorem 10.1, page 204]{LadSolUra1968} and \cite[Chapter IV, Theorem 10.1, page 351]{LadSolUra1968} as well as \cite[Chapter VI, Theorem 6.29 p. 131]{Lie1996} for the respective proofs. The main point of these theorems is that they do not require any regularity assumption on the diffusion matrix $B$ (beyond ellipticity and boundedness).

	\begin{thm}\label{thm: parabolic regularity} (De Giorgi-Nash-Moser estimate.)
	     Suppose $f$ is a weak solution of the equation 
	     \begin{align*}
	          \partial_tf  = \dive\left ( B\nabla f + f b \right ),	
	     \end{align*}
         where $b$ is a vector field and $B$ is a symmetric matrix such that
		 \begin{align*}
	       \lambda\; \Id \leq B(v,t) & \leq \Lambda\; \Id \text{ a.e. in } Q.
	     \end{align*}
	     Then, there is some $\alpha \in (0,1)$ and $C>0$ such that the following estimate holds:
         \begin{equation}\label{eqn: DeGiorgi Nash Moser estimate}
	       [f]_{C^{\alpha,\alpha/2}(Q_{1/2})}\leq C\left (\|f\|_{L^\infty(Q)}+\|g\|_{L^\infty(Q)}+R^2\|b\|_{L^\infty(Q)} \right ),
	     \end{equation}		 
		 where $Q_{1/2}:= B_{R/2}(x_0)\times (t_0-(R/2)^2,t_0)$ and $\alpha$ and $C$ are determined by $\lambda,\Lambda,R$ and $d$.
	\end{thm}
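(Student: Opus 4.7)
This statement is a classical parabolic regularity result, cited from Ladyzhenskaya-Solonnikov-Uraltseva and Lieberman, so the plan is to sketch the standard De Giorgi--Nash--Moser argument rather than invent anything new. The first step is to reduce to the unit cylinder by the parabolic scaling $\tilde f(y,s) := f(x_0+Ry,\,t_0+R^2 s)$ on $B_1\times(-1,0)$. Under this rescaling the equation retains the same form, the ellipticity constants $\lambda,\Lambda$ are unchanged, and the lower-order coefficient gets multiplied by $R$ (with an extra factor $R$ coming from the divergence), which explains the $R^2\|b\|_{L^\infty}$ factor on the right-hand side. After this normalization it suffices to prove a Hölder estimate on $Q_{1/2}$ with all the quantities on the right replaced by a single universal constant.

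The heart of the proof is an oscillation-decay lemma: there exist $\theta\in(0,1)$ and $\alpha\in(0,1)$, depending only on $\lambda,\Lambda,d$, such that
\begin{equation*}
  \operatorname{osc}_{Q_{r/2}} f \ \le\ \theta\, \operatorname{osc}_{Q_r} f + C\,r^\alpha\bigl(\|b\|_{L^\infty}+\text{forcing}\bigr),
\end{equation*}
from which a standard dyadic iteration produces the $C^{\alpha,\alpha/2}$-seminorm bound. To prove this decay one combines three ingredients: (i) a Caccioppoli energy estimate, obtained by testing the equation against $(f-k)_{\pm}\eta^2$ for a parabolic cutoff $\eta$, yielding $\sup_t\!\int(f-k)_\pm^2\eta^2 + \int\!\!\int|\nabla (f-k)_\pm|^2\eta^2$ bounded by the right-hand side; (ii) De Giorgi's first lemma, which upgrades an $L^2$ smallness of $(f-k)_+$ on $Q_r$ to a pointwise bound $f\le (k+K)/2$ on $Q_{r/2}$; and (iii) a parabolic isoperimetric (measure-intermediate) lemma that converts the dichotomy ``$f$ is either far from its supremum on a set of positive measure or close to it everywhere'' into honest oscillation improvement.

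The main technical obstacle, as in all proofs of this kind, is the measure-intermediate step (iii): bridging the $L^2$-based energy/Caccioppoli bounds with pointwise bounds is delicate in the parabolic setting because space and time scale differently and the ``good slice'' in time must be chosen carefully. Once that lemma is in place, iterating the oscillation estimate over the dyadic sequence $Q_{2^{-k}}$ yields
\begin{equation*}
 \operatorname{osc}_{Q_{2^{-k}}} f \ \le\ C\,2^{-\alpha k}\bigl(\|f\|_{L^\infty(Q)} + \|g\|_{L^\infty(Q)} + R^2\|b\|_{L^\infty(Q)}\bigr),
\end{equation*}
which, after unwinding the initial parabolic rescaling, is exactly \eqref{eqn: DeGiorgi Nash Moser estimate}. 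Since none of this depends on regularity of $B$ beyond measurability, ellipticity, and boundedness, the conclusion applies directly to the linear Landau-type operators $\QL$ and $\QKS$ once the required a priori bounds on $A[g]$ and $a[g]$ are in place.
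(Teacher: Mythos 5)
The paper does not actually prove this theorem; it is stated as a classical De Giorgi--Nash--Moser estimate and attributed to \cite[Chapter III, Theorem 10.1]{LadSolUra1968} and \cite[Chapter VI]{Lie1996}. Your sketch (parabolic rescaling to the unit cylinder, Caccioppoli inequality, De Giorgi's first and measure-intermediate lemmas, oscillation decay, dyadic iteration) is an accurate outline of exactly the argument those references carry out, so the approach agrees with the one the paper is implicitly relying on.

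Two small remarks. First, your scaling bookkeeping is slightly off: under $\tilde f(y,s)=f(x_0+Ry,\,t_0+R^2 s)$ the drift rescales as $\tilde b = R\,b$, not $R^2 b$; the extra power of $R$ in \eqref{eqn: DeGiorgi Nash Moser estimate} is harmless only because the paper allows $C$ to depend on $R$, so you should either present the estimate on the unit cylinder or track $R$-dependence into the constant rather than into the coefficient. Second, the term $\|g\|_{L^\infty(Q)}$ appearing in \eqref{eqn: DeGiorgi Nash Moser estimate} is not defined in the statement (no $g$ enters the PDE $\partial_t f=\dive(B\nabla f+fb)$); this looks like a leftover from a version with an inhomogeneous term, and you reproduce it without comment. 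It would be cleaner to flag it as a typo or replace it by the appropriate forcing term if one is meant to be present.
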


    \begin{thm}\label{thm: Stampacchia} (Stampacchia estimate.) 
        If $f$ is a weak solution of
        \begin{equation*}
            \partial_tf \leq \dive\left (B\nabla f + b \right),
        \end{equation*}
        with $B$ and $b$ as in the previous theorem, there exists a constant $C>0$ such that
	    \begin{equation}\label{eqn: Stampacchia estimate}
	      \|f\|_{L^\infty(Q)}\leq C\left (\|f\|_{L^\infty(\partial_p Q)}+ \|b\|_{L^d(Q)}	\right),
        \end{equation}
        as before, $C$ is determined by $\lambda,\Lambda,d$ and $R$.
    \end{thm}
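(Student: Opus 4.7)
The plan is to apply the classical De Giorgi--Stampacchia level-set truncation scheme. Set $M := \|f\|_{L^\infty(\partial_p Q)}$ and, for each $k \geq M$, let $w_k := (f-k)_+$. Since $f \leq k$ on $\partial_p Q$, the function $w_k$ vanishes on $\partial_p Q$ and may legitimately be used as a test function against the differential inequality. The goal is to prove that the superlevel set $A(k) := \{(v,t) \in Q : f(v,t) > k\}$ is empty for every $k \geq M + C\|b\|_{L^d(Q)}$, which is exactly what the theorem asserts.

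First I would multiply $\partial_t f \leq \dive(B\nabla f + b)$ by $w_k$ and integrate over $Q_s := Q \cap \{\tau \leq s\}$. The boundary contributions vanish because $w_k \equiv 0$ on $\partial_p Q$; an integration by parts transfers the divergence onto $\nabla w_k$; the ellipticity bound $B\nabla f \cdot \nabla w_k \geq \lambda |\nabla w_k|^2$ on $\{f > k\}$ together with Cauchy--Schwarz and Young absorb the drift's gradient contribution into the left-hand side. A final application of H\"older with exponents $d/2$ and $d/(d-2)$ on what remains gives the energy estimate
\[
E(k) := \sup_{s}\|w_k(\cdot,s)\|_{L^2(B_R)}^2 + \lambda \|\nabla w_k\|_{L^2(Q)}^2 \leq C \|b\|_{L^d(Q)}^2 \, |A(k)|^{1-2/d}.
\]

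Next I would combine this with the parabolic Sobolev embedding $\|w_k\|_{L^{2(d+2)/d}(Q)}^2 \leq C_{\mathrm{Sob}} E(k)$ of Ladyzhenskaya and the elementary pointwise bound $(h-k)\ind_{A(h)} \leq w_k$ valid for $h > k$. Setting $\varphi(k) := |A(k)|$, these combine into the recursive control
\[
\varphi(h) \leq \frac{C\|b\|_{L^d(Q)}^{2(d+2)/d}}{(h-k)^{2(d+2)/d}} \, \varphi(k)^{(d^2-4)/d^2},
\]
which is to be iterated along a geometric sequence of levels $k_n = M + d_0(1-2^{-n})$, with $d_0$ chosen proportional to $\|b\|_{L^d(Q)}$ (and a constant depending on $\lambda, \Lambda, d, R$), so that $\varphi(k_\infty) = 0$.

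The main technical obstacle is that the exponent $(d^2-4)/d^2$ equals $5/9$ when $d = 3$, which is strictly below one, so the textbook Stampacchia iteration lemma (which requires super-linear feedback in $\varphi$) does not apply directly on a fixed cylinder. The standard remedy, carried out in detail in Ladyzhenskaya--Solonnikov--Ural'tseva and in Lieberman, is to pair the level iteration with a simultaneous shrinking of the cylinder using smooth cutoff functions, absorbing the resulting loss into $R$-dependent constants; an alternative, cleaner route is a Moser-type $L^p$--$L^\infty$ iteration performed on $w_k$ directly, which bypasses the distribution function altogether. Either strategy delivers the stated bound with the advertised dependence of $C$ on $\lambda, \Lambda, d,$ and $R$.
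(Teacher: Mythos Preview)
The paper does not supply its own proof of this theorem: it is quoted as a classical result and the reader is referred to Ladyzhenskaya--Solonnikov--Ural'tseva and to Lieberman for the argument. Your level-set truncation/energy sketch is exactly the method carried out in those references, and you have correctly isolated the one genuine subtlety (the sub-linear feedback exponent in the naive iteration) together with its standard fixes, so there is nothing further to compare.
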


    We also recall the interior classical regularity estimates when the coefficients are H\"older continuous in time and space. See \cite[Chapter IV]{LadSolUra1968} or also \cite[Chapter III, p. 59]{Lie1996} for a proof.
    \begin{thm}\label{thm: Schauder} (Schauder estimates.) If $B,b \in C^{\beta;\beta/2}(Q)$, then there is a finite $C$ such that
	                \begin{equation*}
		                	\;[D^2f]_{C^{\beta,\alpha/2}(Q_{1/2})}+[\partial_t f]_{C^{\beta,\beta/2}(Q_{1/2})} \leq C\left ( \lambda,\Lambda,R, \|B\|_{C^{\beta;\beta/2}(Q)},\|b\|_{C^{\beta;\beta/2}(Q)},\|f\|_{L^\infty(Q)} \right ). 
	                \end{equation*}	 	

    \end{thm}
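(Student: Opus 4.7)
My plan is to establish this via the classical freezing-of-coefficients method combined with a Campanato iteration on dyadic parabolic cylinders. The key idea is to locally approximate $f$ by a solution of a constant-coefficient equation, which enjoys sharp interior estimates, and to control the error using the H\"older modulus of the coefficients.

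First I would rewrite the divergence-form equation $\partial_t f = \dive(B\nabla f + fb)$ as $\partial_t f - \tr(B D^2 f) = \text{lower order}$, treating the gradient and zeroth-order terms as perturbations, and establish the constant-coefficient interior Schauder estimate: if $v$ solves $\partial_t v = \tr(B_0 D^2 v)$ in $Q_r$ with $\lambda\,\Id \leq B_0 \leq \Lambda\,\Id$, then on $Q_{r/2}$ the seminorms $[D^2 v]_{C^{\beta,\beta/2}}$ and $[\partial_t v]_{C^{\beta,\beta/2}}$ are controlled by $\|v\|_{L^\infty(Q_r)}$. This step reduces via an affine change of variables to the standard heat equation, where the estimates follow from explicit heat-kernel representations.

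Next, for each $(x_0,t_0) \in Q_{1/2}$ and small $r$, I would freeze $B$ at $(x_0,t_0)$ and decompose $f = v + w$ on the parabolic cylinder $Q_r(x_0,t_0)$, where $v$ solves $\partial_t v = \tr(B(x_0,t_0) D^2 v)$ with $v=f$ on $\partial_p Q_r$, and $w$ solves
\[ \partial_t w - \tr(B(x_0,t_0)D^2 w) = \tr\bigl((B(x,t)-B(x_0,t_0))D^2 f\bigr) + \text{lower order}, \qquad w\big|_{\partial_p Q_r}=0. \]
Since $B \in C^{\beta;\beta/2}$, the source is bounded by $Cr^\beta \|D^2 f\|_{L^\infty(Q_r)}$, so the Stampacchia bound in Theorem \ref{thm: Stampacchia} controls $w$, while the constant-coefficient estimate controls $v$ sharply. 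Combining the two yields an improvement-of-oscillation, in Campanato form, for the mean oscillation of $D^2 f$ on cylinders of radius $\rho \ll r$. Iterating along dyadic radii $r_k = 2^{-k}R$ gives the claimed bound on $[D^2 f]_{C^{\beta,\beta/2}(Q_{1/2})}$, and the estimate on $[\partial_t f]_{C^{\beta,\beta/2}}$ then follows directly from the equation by applying the product rule for parabolic H\"older seminorms to $\tr(B D^2 f)$ and to the lower-order terms.

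The main obstacle is making the iteration quantitatively contractive: one must choose the radius at each step small in terms of $[B]_{C^{\beta;\beta/2}}$ and absorb $\|D^2 f\|_{L^\infty}$ on the left-hand side via an Ehrling-type interpolation between $\|f\|_{L^\infty}$ and the $C^{2+\beta,1+\beta/2}$ seminorm. A slicker alternative is a compactness argument by contradiction: rescale a hypothetical sequence of counterexamples so that the $C^{\beta,\beta/2}$ seminorm of $D^2 f_n$ is normalized to one, pass to a limit, and derive a contradiction with a Liouville-type rigidity statement for the constant-coefficient equation. Either approach is carried out in detail in Ladyzhenskaya-Solonnikov-Uraltseva and in Lieberman's book, so Theorem \ref{thm: Schauder} may be invoked as a black box in the sequel.
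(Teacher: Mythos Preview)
Your proposal is correct and aligns with the paper's treatment: the paper does not prove this theorem at all but simply cites \cite[Chapter IV]{LadSolUra1968} and \cite[Chapter III, p.~59]{Lie1996}, exactly the references you invoke at the end. Your sketch of the freezing-of-coefficients/Campanato iteration is the standard argument found in those sources, so there is nothing to add.
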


   
  \section{Radial symmetry}\label{sec: radial symmetry}
  This section is devoted to some technical lemmas. The proofs of the first two propositions are rather technical and can be found in the Appendix. 

  \begin{prop}\label{prop: preservation of radiality}
      Suppose $\fin$ and $g(\cdot,t)$ are both radially symmetric, and let $Q(\cdot,\cdot)$ denote either $\QL$ or $\QKS$. Then any solution of the linear Cauchy problem 
	  \begin{equation*}
    	 \partial_t f = Q(g,f), \quad f(v,0)=f_{in}(v),
      \end{equation*}
	  is radially symmetric for all $t$. Furthermore, if $\fin$ and $g$ are radially decreasing, then so is $f$.
    \end{prop}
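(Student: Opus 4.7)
The plan is to handle the two assertions separately: radial symmetry will follow from uniqueness and rotational invariance of the linearized equation, while radial monotonicity will follow from the parabolic maximum principle applied to the radial derivative of $f$.

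For radial symmetry, for any rotation $R\in SO(3)$ set $\tilde f(v,t):= f(Rv,t)$. Since $g(\cdot,t)$ is radial, direct changes of variables in the convolutions defining $A[g]$ and $a[g]$ give $A[g](Rv) = R\,A[g](v)\,R^T$ and $a[g](Rv) = a[g](v)$; consequently $\QL(g,\cdot)$ and $\QKS(g,\cdot)$ both commute with rotations, so $\tilde f$ solves the same linear Cauchy problem as $f$ with the same radial datum $\fin$. Uniqueness from Lemma \ref{lem: existence uniqueness regularity for linear problem} then forces $\tilde f\equiv f$, hence $f(\cdot,t)$ is radial for every $t>0$.

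For monotonicity I pass to radial coordinates. Writing $w(r,t) := f(v,t)$ with $r = |v|$, the radial symmetry of $g$ diagonalises $A[g]$ in the local frame $(\hat v,\hat\theta_1,\hat\theta_2)$ with eigenvalues $\mu_r(r,t)$ and $\mu_\theta(r,t)$ (the latter repeated). A direct computation gives $\Tr(A[g]D^2 f) = \mu_r\,w_{rr} + (2\mu_\theta/r)\,w_r$, so the equation reduces to the one-dimensional parabolic problem
\begin{equation*}
  w_t \;=\; \mu_r\,w_{rr} \;+\; \tfrac{2\mu_\theta}{r}\,w_r \;+\; wg,
\end{equation*}
with the specialisation $\mu_r = \mu_\theta = a[g]$ in the $\QKS$ case. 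Differentiating in $r$ and setting $u:= w_r$ produces an equation of the form $u_t = \mu_r\,u_{rr} + B(r,t)\,u_r + c(r,t)\,u + w\,g_r$, with explicit coefficients $B,c$ controlled by the regularity of $g$. The crucial sign observation is that the inhomogeneous term $w\,g_r$ is non-positive, since $w\ge 0$ while $g_r\le 0$ by hypothesis. Combined with $u(\cdot,0)\le 0$, $u(0,t)=0$ (from smoothness of $f$ at the origin), and $u\to 0$ at infinity, the parabolic maximum principle---after the usual rescaling $\tilde u:= e^{-\lambda t}u$ with $\lambda>\sup c$ to absorb the zero-order term---forces $u\le 0$ globally, which is exactly the radial monotonicity of $f$.

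The main obstacle is the singular coefficient $2\mu_\theta/r$ arising at $r=0$, which prevents a naive application of the standard maximum principle on $[0,\infty)$. I would sidestep this either by reverting to $\R^3$ and working with the axisymmetric quantity $\partial_{x_1}f$ on the half-space $\{x_1>0\}$ (where no radial singularity appears and one uses $\partial_{x_1}g\le 0$ by the radial monotonicity of $g$), or by the even reflection $w(-r,t)=w(r,t)$, which absorbs the singular drift into a smooth Cartesian operator on $\R$. A secondary technical point is verifying boundedness of $\mu_r, \mu_\theta$ and their radial derivatives; this follows from the regularity and integrability of $g$ together with the explicit convolution representation of $A[g]$, and can be deferred to the Appendix as suggested.
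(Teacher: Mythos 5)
Your proof is correct and follows essentially the same strategy as the paper: rotational equivariance of $Q(g,\cdot)$ combined with uniqueness for the radial symmetry, and passage to radial coordinates followed by a maximum principle applied to $\partial_r f$ (using the sign of $f\,\partial_r g$) for the monotonicity. If anything, your direct verification that $A[g](Rv)=R\,A[g](v)\,R^{T}$ is cleaner than the paper's decomposition of $Q$ into $a[g]\Delta f-\dive(\tilde A[g]\nabla f)+\alpha fg$, and your explicit attention to the singular drift at $r=0$ (even reflection or working with $\partial_{x_1}f$) addresses a point the paper's proof passes over without comment.
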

 
    Let $h:\mathbb{R}^3\to\mathbb{R}_+$, define
    \begin{equation}\label{eqn: A eigenvalue radial direction}
    	 A^*[h](v) := (A[h](v)\hat v,\hat v),\;\;v\neq 0,\quad \hat v := v|v|^{-1}.
    \end{equation}
    There are two useful expressions for $A^*[h]$ and $a[h]$ when $h$ is radially symmetric.
    \begin{prop}\label{prop: Landau in radial coordinates}
    	 Let $h \in L^1(\mathbb{R}^3)$ be radially symmetric and non-negative. Then
         \begin{align}\label{A_symm}
         	  A^*[h](v) & = \frac{1}{12 \pi |v|^3}\int_{B_{|v|}}h(w)|w|^2\;dw+\frac{1}{12 \pi}\int_{B_{|v|}^c}\frac{h(w)}{|w|}\;dw,\\
              a[h](v) & = \frac{1}{4\pi |v|}\int_{B_{|v|}}h(w)\;dw+\frac{1}{4\pi} \int_{B_{|v|}^c}\frac{h(w)}{|w|}\;dw.\label{a_symm}
         \end{align}
    \end{prop}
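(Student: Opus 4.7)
The formula for $a[h]$ will follow from the Newtonian potential representation. Since $-\Delta a[h]=h$ in $\mathbb{R}^3$, we have $a[h](v)=\frac{1}{4\pi}\int h(w)|v-w|^{-1}\,dw$. For radial $h$ I would write $w=r\omega$, $\omega\in S^2$, and invoke the classical Newton shell identity
\[
\int_{S^2}\frac{d\sigma(\omega)}{|v-r\omega|}=\frac{4\pi}{\max(|v|,r)},
\]
which reduces the three-dimensional integral to a one-dimensional one in $r$. Splitting at $r=|v|$ and re-expressing the one-dimensional integrals as volume integrals over $B_{|v|}$ and $B_{|v|}^c$ produces \eqref{a_symm}.

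For \eqref{A_symm} the key algebraic observation is
\[
A(u)=\frac{1}{8\pi|u|}\Big(\mathbb{I}-\frac{u\otimes u}{|u|^2}\Big)=\frac{1}{8\pi}\,D^2|u|,\qquad u\neq 0,
\]
because the Hessian of the map $u\mapsto|u|$ on $\mathbb{R}^3$ equals $|u|^{-1}(\mathbb{I}-\hat u\otimes\hat u)$. Convolving against $h$ in the $w$ variable therefore gives
\[
A[h](v)=\frac{1}{8\pi}\,D^2U_h(v),\qquad U_h(v):=\int_{\mathbb{R}^3}h(w)|v-w|\,dw.
\]
If $h$ is radial then so is $U_h$; writing $U_h(v)=u(|v|)$ and using the radial-Hessian identity $\bigl(D^2u(|v|)\hat v,\hat v\bigr)=u''(|v|)$, one arrives at the clean formula
\[
A^*[h](v)=\bigl(A[h](v)\hat v,\hat v\bigr)=\frac{1}{8\pi}\,u''(|v|).
\]

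The last step is to compute $u(R)$ and differentiate twice in $R$. To evaluate the spherical average $\int_{S^2}|v-r\omega|\,d\sigma(\omega)$ with $|v|=R$, I expand $|v-r\omega|^2=R^2+r^2-2Rr\mu$ where $\mu=\omega\cdot\hat v\in[-1,1]$ and substitute $s=R^2+r^2-2Rr\mu$; the angular integral reduces to $\frac{\pi}{Rr}\int_{(R-r)^2}^{(R+r)^2}s^{1/2}\,ds=\frac{2\pi}{3Rr}\bigl[(R+r)^3-|R-r|^3\bigr]$, which splits cleanly into the cases $r\le R$ and $r>R$. Inserting this into $u(R)$ yields four radial integrals with $R$-dependent prefactors; differentiating twice produces eight boundary contributions of the form $h(R)R^k$ (from the derivative hitting limits of integration), and I would check that they cancel in pairs, leaving
\[
u''(R)=\frac{2}{3R^3}\int_{B_R}h(w)|w|^2\,dw+\frac{2}{3}\int_{B_R^c}\frac{h(w)}{|w|}\,dw.
\]
Dividing by $8\pi$ gives \eqref{A_symm}.

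The only real obstacle in this plan is the bookkeeping needed to verify that the eight $h(R)$-terms arising from $u''(R)$ all cancel; aside from that, the argument reduces to a direct Newton-shell type computation, and the Hessian identity $A(u)=\tfrac{1}{8\pi}D^2|u|$ is what makes the whole calculation tractable.
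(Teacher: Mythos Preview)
Your approach is correct and genuinely different from the paper's. The paper does not use the Hessian identity $A(u)=\tfrac{1}{8\pi}D^2|u|$; instead it computes directly the spherical shell integral
\[
I(r,t)=\int_{\partial B_t}\frac{1-\cos^2\hat\theta}{|v-w|}\,d\sigma(w),\qquad r=|v|,
\]
by rewriting $\sin^2\hat\theta=(t^2-w_1^2)/|v-w|^2$ with $w_1=(w,\hat v)$, reducing to a one-variable integral in $z_1\in(-1,1)$, and evaluating it algebraically to obtain $I(r,t)=\tfrac{8\pi t^4}{3r^3}$ for $t<r$ and $I(r,t)=\tfrac{8\pi}{3t}$ for $t>r$. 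Your route via $A^*[h]=\tfrac{1}{8\pi}u''$ is more structural: it links the formula to the representation $A[h]=D^2(\Delta^{-2}h)$ that the paper itself invokes later (Proposition~\ref{prop: A is C^1,alpha and strictly positive}), and the boundary-term cancellations you anticipate in $u''$ are indeed systematic rather than accidental. One small caveat: for $h$ merely in $L^1(\mathbb{R}^3)$ the intermediate potential $U_h(v)=\int h(w)|v-w|\,dw$ need not be finite (a first moment of $h$ is required), so to make the argument airtight under the stated hypotheses you should first establish the identity for compactly supported $h$ and pass to the limit; the paper's direct shell computation sidesteps this by never forming $U_h$.
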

	The second formula above is simply the classical formula for the Newtonian potential in the case of radial symmetry, the formula for $A^*[h]$ is new and 
        
    \begin{lem}\label{lemma: A^* bounds}
      Let $h \in L^1(\R^3)$ be a non-negative, decreasing radial function.\\
      
      (1) If \begin{equation*}
        \int_{B_{R_1}\setminus B_{R_0}} h \;dv \geq \delta>0,
      \end{equation*}
      for some $\delta>0$ and $0<R_0<R_1$ then,
      \begin{align}\label{bound_below_A[g]}
        A[h](v)\geq \frac{\delta R_0^2}{12\pi(1+R_1^3)}\frac{1}{1+|v|^3} \Id.
      \end{align}

      (2) If $h$ is bounded, i.e. if $\|h\|_{L^\infty(\mathbb{R}^3)} = h(0) <+\infty$, it holds
      \begin{align}\label{bound_above_A[g]}
        A[h](v)\leq a[h]\Id  \le 2\left (\frac{ \|h\|_{L^\infty(\mathbb{R}^3)}+\|h\|_{L^1(\mathbb{R}^3)} }{1+|v|}\right )\;\Id.
      \end{align}
    \end{lem}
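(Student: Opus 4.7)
The plan is as follows. For part (1), I would first reduce the matrix bound to a scalar lower bound on $A^*[h](v)$. By rotational symmetry, for radial $h$ the matrix $A[h](v)$ is block diagonal in the frame $\{\hat v, \hat v^\perp\}$: it has eigenvalue $\ell_1 = A^*[h](v)$ in the radial direction and, using $\tr A[h] = a[h]$, a double eigenvalue $\ell_2 = (a[h](v) - A^*[h](v))/2$ in the perpendicular directions. Subtracting the two formulas in Proposition~\ref{prop: Landau in radial coordinates} gives
\[
a[h](v) - 3 A^*[h](v) = \frac{1}{4\pi|v|^3}\int_{B_{|v|}} h(w)(|v|^2 - |w|^2)\,dw \geq 0,
\]
so $\ell_2 \geq \ell_1$ and $A^*[h]$ is the smallest eigenvalue of $A[h](v)$. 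Hence the matrix estimate reduces to the scalar claim $A^*[h](v) \geq \delta R_0^2/[12\pi(1+R_1^3)(1+|v|^3)]$.

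To obtain that scalar estimate I would split into three ranges for $|v|$ and apply \eqref{A_symm} directly. If $|v| \geq R_1$, the annulus $B_{R_1}\setminus B_{R_0}$ sits inside $B_{|v|}$, so the first integral alone yields $A^*[h](v) \geq \delta R_0^2/(12\pi|v|^3)$. If $|v| \leq R_0$, the annulus sits inside $B_{|v|}^c$ with $|w|\leq R_1$ there, and the second integral gives $A^*[h](v) \geq \delta/(12\pi R_1)$. For the intermediate range $R_0 < |v| < R_1$, the annulus is split between the two integrals, but pulling out the smaller of the coefficients $R_0^2/|v|^3$ and $1/R_1$ again reduces the estimate to one of the two previous bounds. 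In each case, the elementary comparisons $(1+|v|^3)(1+R_1^3) \geq |v|^3$ and $(1+R_1^3) \geq R_1^3 \geq R_1 R_0^2$ (using $R_1 > R_0$) verify that the bound produced dominates the claimed right-hand side.

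For part (2), the first inequality $A[h](v) \leq a[h](v)\mathbb{I}$ follows from the fact that $A[h](v) = \int A(v-w) h(w)\,dw$ is an integral of positive semidefinite matrices against the nonnegative weight $h$, hence PSD; each of its eigenvalues is therefore bounded by the trace, which equals $a[h](v)$. For the second inequality I would estimate the Newton potential $a[h](v) = \frac{1}{4\pi}\int h(w)/|v-w|\,dw$ in two regimes. When $|v|\geq 1$, formula \eqref{a_symm} bounds each of its two terms by $\|h\|_{L^1}/(4\pi|v|)$, which fits inside $2\|h\|_{L^1}/(1+|v|)$. When $|v|\leq 1$ I would partition instead by $|v-w|$: the near-field $|v-w|<1$ is bounded by $2\pi\|h\|_\infty$ (using $\int_{|w-v|<1}|w-v|^{-1}dw = 2\pi$) and the far-field $|v-w|\geq 1$ by $\|h\|_{L^1}$, giving $a[h](v) \leq (\|h\|_\infty + \|h\|_{L^1})/2 \leq 2(\|h\|_\infty + \|h\|_{L^1})/(1+|v|)$ since $1+|v|\leq 2$.

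The main bookkeeping obstacle I anticipate is the intermediate range $R_0 < |v| < R_1$ in part (1), where the mass in the guaranteed annulus is split between the two integrals of \eqref{A_symm} with different weights; combining the two partial lower bounds into a single clean estimate with the correct $|v|$-dependence requires a careful case split on which coefficient is smaller.
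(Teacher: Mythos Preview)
Your argument is correct, and in part (2) it is essentially identical to the paper's (you split the Newton potential at scale $1$; the paper does the same, using \eqref{a_symm} rather than a near/far decomposition in $|v-w|$, with the same arithmetic). Your explicit verification that $A^*[h]$ is the \emph{smallest} eigenvalue of $A[h]$, via $a[h]-3A^*[h]\geq 0$, is a useful step that the paper leaves implicit.

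The one substantive difference is your three-region case split in part (1). The paper avoids the intermediate range $R_0<|v|<R_1$ entirely by observing that $A^*[h]$ is radially decreasing: differentiating \eqref{A_symm} in $r=|v|$ gives $\tfrac{d}{dr}A^*[h](r)=-\tfrac{1}{r^4}\int_0^r h(s)s^4\,ds\leq 0$. Hence the bound $A^*[h](v)\geq \delta R_0^2/(12\pi|v|^3)$ for $|v|\geq R_1$ already implies $A^*[h](v)\geq \delta R_0^2/(12\pi R_1^3)$ for all $|v|\leq R_1$, and the two estimates combine immediately. This sidesteps exactly the ``bookkeeping obstacle'' you anticipated, so if you want a cleaner write-up you can replace your case split by the monotonicity observation; but your direct argument also goes through.
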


    \begin{proof}    
    (1)   Let $A^*[h]$ be as in (\ref{A_symm}). If $|v|\geq R_1$, then 
     \begin{align*}
         A^*[h](v) &\ge \frac{1}{12\pi |v|^3} \int_{B_{R_1}} {h(w)}{|w|^2} \;dw  \ge  \frac{1}{12\pi |v|^3} \int_{B_{R_1}\setminus B_{R_0}} {h(w)}{|w|^2} \;dw\\
         &\geq  \frac{R_0^2}{12\pi |v|^3}    \int_{B_{R_1}\setminus B_{R_0}} {h(w,t)} \;dw \ge     \frac{\delta R_0^2}{12\pi |v|^3}.
     \end{align*}
     Note that Proposition \eqref{prop: Landau in radial coordinates} guarantees that $A^*[h]$ is radially decreasing. Thus,
     \begin{equation*}
         A^*[h](v) \geq \frac{\delta R_0^2}{12\pi R_1^3},\;\;\forall\;v \in B_{R_1}.
     \end{equation*}
     Combining both estimates, we conclude that
     \begin{equation*}
         A^*[h](v) \geq \frac{\delta R_0^2}{12\pi(1+R_1^3)}\frac{1}{1+|v|^3}. 
     \end{equation*}
     
     (2) If $h \in L^\infty$, we may use \eqref{A_symm} to obtain the estimate
     \begin{eqnarray*}
         A[h]\leq a[h](v)\Id &\leq &  \left(\frac{h(0)}{4\pi |v|} \int_{B_{|v|}}\;dw+\frac{1}{4\pi}\int_{B_1^c}h(w)\;dw + \frac{1}{4\pi}\int_{B_1}\frac{h(w)}{|w|}\;dw \right) \Id \\
       &  \le &\left(\|h\|_{L^\infty(\R^3)} +  \|h\|_{L^1(\R^3)} \right)\Id, \quad \text{ if } |v|\leq1,       
     \end{eqnarray*}
     and 
     \begin{eqnarray*}
        A[h]\leq a[h](v) \Id \leq &\left(\frac{\|h\|_{L^1(\mathbb{R})}}{2 \pi |v|} \right)\Id,\quad \text{ if } |v|\geq 1. \\
     \end{eqnarray*}

    \end{proof}
    
     \begin{prop}\label{lem: classical radial barriers}
    	 Let $h$ be a positive and radially symmetric decreasing function. For any $\gamma \in (0,1)$ define $U_\gamma(v)$ as 
	     \begin{align*}
	       U_\gamma(v):=|v|^{-\gamma}.
	     \end{align*}
	     Then,
         \begin{equation*}
         	  \QL(h,U_\gamma),\;  \QKS(h,U_\gamma)\leq U_\gamma \left ( -\tfrac{1}{3}\gamma(1-\gamma) a[h]|v|^{-2}+h\right ).
         \end{equation*} 
    \end{prop}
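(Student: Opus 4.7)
The strategy is to compute $\nabla U_\gamma$ and $D^2 U_\gamma$ explicitly, plug into the formulas for $Q_\mathcal{L}(h,U_\gamma)$ and $Q_{\mathcal{KS}}(h,U_\gamma)$, and then use the radial representations from Proposition \ref{prop: Landau in radial coordinates} to pin down the relationship between $A^*[h]$ and $a[h]$.

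\textbf{Step 1 (derivatives of the barrier).} A direct computation gives
\begin{equation*}
D^2 U_\gamma(v) \;=\; -\gamma|v|^{-\gamma-2}\,\Id \;+\; \gamma(\gamma+2)|v|^{-\gamma-4}\,v\otimes v,
\end{equation*}
and in particular $\Delta U_\gamma = -\gamma(1-\gamma)|v|^{-\gamma-2} = -\gamma(1-\gamma)\,U_\gamma|v|^{-2}$. So $U_\gamma$ is superharmonic on $\gamma\in(0,1)$.

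\textbf{Step 2 (the Krieger--Strain operator).} Here the computation is immediate: since $\QKS(h,U_\gamma) = a[h]\Delta U_\gamma + hU_\gamma$, Step 1 gives
\begin{equation*}
\QKS(h,U_\gamma) \;=\; U_\gamma\bigl(-\gamma(1-\gamma)\,a[h]|v|^{-2} + h\bigr),
\end{equation*}
and since $\gamma(1-\gamma)\geq \tfrac13\gamma(1-\gamma)$ and $a[h]\geq 0$, the desired estimate follows.

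\textbf{Step 3 (the Landau operator).} For $\QL$ the main task is to compute $\tr(A[h]D^2U_\gamma)$. Using Step 1,
\begin{equation*}
\tr(A[h]D^2U_\gamma) \;=\; -\gamma|v|^{-\gamma-2}\tr(A[h]) \;+\; \gamma(\gamma+2)|v|^{-\gamma-2}\,A^*[h].
\end{equation*}
I would then observe that the matrix $A(u)$ is $\tfrac{1}{8\pi|u|}$ times the orthogonal projection onto $u^\perp$ in $\R^3$, so $\tr(A(u))=\tfrac{1}{4\pi|u|}$; integrating against $h$ and comparing with \eqref{a_symm} gives the identity $\tr(A[h]) = a[h]$. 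Hence
\begin{equation*}
\QL(h,U_\gamma) \;=\; U_\gamma\bigl(-\gamma\,a[h]|v|^{-2} + \gamma(\gamma+2)A^*[h]|v|^{-2} + h\bigr).
\end{equation*}

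\textbf{Step 4 (the key algebraic inequality).} Everything reduces to showing the pointwise bound
\begin{equation*}
A^*[h](v) \;\leq\; \tfrac{1}{3}\,a[h](v).
\end{equation*}
Subtracting \eqref{A_symm} from one third of \eqref{a_symm} gives
\begin{equation*}
\tfrac13 a[h](v) - A^*[h](v) \;=\; \frac{1}{12\pi|v|^3}\int_{B_{|v|}}\bigl(|v|^2-|w|^2\bigr)h(w)\,dw \;\geq\; 0,
\end{equation*}
which is nonnegative since $|w|\leq |v|$ on the domain of integration. Feeding $A^*[h]\leq \tfrac13 a[h]$ into the expression from Step 3 yields
\begin{equation*}
-\gamma\,a[h] + \gamma(\gamma+2)A^*[h] \;\leq\; -\gamma a[h] + \tfrac{\gamma(\gamma+2)}{3}a[h] \;=\; -\tfrac{1}{3}\gamma(1-\gamma)\,a[h],
\end{equation*}
which is exactly the claimed bound for $\QL(h,U_\gamma)$.

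The only genuinely non-obvious step is the identity $A^*[h]\leq \tfrac13 a[h]$; the rest is bookkeeping from Proposition \ref{prop: Landau in radial coordinates} and the standard fact that $A(u)$ is a rescaled projection.
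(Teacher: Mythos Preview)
Your proof is correct and follows essentially the same route as the paper: compute $D^2U_\gamma$, use $\tr(A[h])=a[h]$ and $(A[h]\hat v,\hat v)=A^*[h]$, then apply the pointwise inequality $A^*[h]\le\tfrac13 a[h]$. The only cosmetic difference is that the paper writes the Hessian in the radial--tangential decomposition $D^2U_\gamma = U_\gamma''\,\hat v\otimes\hat v + \tfrac{U_\gamma'}{|v|}(\Id-\hat v\otimes\hat v)$ rather than your Cartesian form, and the paper simply asserts $A^*[h]\le\tfrac13 a[h]$ whereas you supply the one-line verification from \eqref{A_symm}--\eqref{a_symm}.
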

    \begin{proof}
	     As $U_\gamma$ is radial
	     \begin{equation*}
	     	  \nabla U_\gamma(v) = U'_\gamma(v)\tfrac{v}{|v|},\;\;D^2 U_\gamma(v) = U''_\gamma(v)\tfrac{v}{|v|}\otimes \tfrac{v}{|v|}+U'_\gamma(v)\tfrac{1}{|v|}(\Id - \tfrac{v}{|v|}\otimes \tfrac{v}{|v|}).
	     \end{equation*}
	     Thus, 
    	 \begin{align*}
    	 	  \QL(h,U_\gamma) & = \tr (A[h]D^2U_\gamma)+hU_\gamma= A^*[h] U_\gamma''+\frac{a[h]-A^*[h]}{|v|}U_\gamma'+h U_\gamma.	
    	 \end{align*}
         In particular, since $U_\gamma'= -\gamma r^{-1}U_\gamma$, $U_\gamma''= \gamma(\gamma+1)|v|^{-2}U_\gamma$, it holds
         \begin{equation*}
         	  \QL(h,U_\gamma) = U_\gamma \left ( \gamma(\gamma+1)A^*[h]|v|^{-2}-\gamma (a[h]-A^*[h])|v|^{-2}+h \right ).
         \end{equation*}
         The thesis follows by noticing that $ A^*[h] \le  \frac{1}{3} a[h]$. \\          
                  As for $\QKS(h,U_\gamma)$, straightforward computations show that 
         $$
         \QKS(h,U_\gamma) = U_\gamma\left( -\gamma(1-\gamma)a[h]|v|^{-2} +h\right).
         $$ 
    \end{proof}

   \section{Short time existence.}\label{short_time_existence}

  In the following section, the operator $Q$ denotes either $\QL$ or $\QKS$, unless otherwise specified. A sequence $\{ f_k^{(\delta)} \}_k$ of approximate solutions to the non-linear Cauchy problem for $Q$ is built iteratively as follows. First set $f^{(\delta)}_0(v,t) :=  \fin(v)$ for all $v$ and $t>0$. Then, for {{$\fin$ as in (\ref{init_data_c_beta}) and \eqref{eqn: initial assumptions}}}, $k\in \mathbb{N}$ let $f^{(\delta)}_{k}$ be the unique classical solution of
	    \begin{equation} \label{eqn: approximating problems}
	     \left \{ \begin{array}{rl}
	          \partial_t f & = \delta \Delta f+Q(f_{k-1}^{(\delta)},f) \text{ in } \mathbb{R}^3 \times \mathbb{R}_+,\\
	          f(\cdot,0) & =  \fin, 
	     \end{array}\right.	
	    \end{equation}
   which exists thanks to Lemma \ref{lem: existence uniqueness regularity for linear problem}. Moreover, each $f_k^{(\delta)}$ is radially symmetric and monotone, by Proposition \ref{prop: preservation of radiality} and the assumptions \eqref{eqn: initial assumptions} for $\fin$. 
   
   By virtue of $f_k^{(\delta)}$ being smooth and monotone decaying, it follows that $f_k^{(\delta)}(0,t)\geq f_k^{(\delta)}(v,t)$ and $D^2f^{(\delta)}(0,t)\leq 0$. Thus, $f_k^{(\delta)}(0,t)$, seen solely as a function of $t$, always satisfies the differential inequality
	\begin{align}\label{eqn: quadratic ordinary differential inequality}
	 \partial_t f_k^{(\delta)}(0,t)\leq f_{k-1}^{(\delta)}(0,t) f_k^{(\delta)}(0,t),\;\; k\ge1.
	\end{align}
	We next prove that $\|f_k^{(\delta)}(\cdot,t)\|_{L^\infty(\R^3)}$ is uniformly bounded in $\delta,k$ for $t\in[0,T_0]$, where $T_0$ depends only on $f_{in}$. 
	\begin{lem}\label{lem:short time}   	   
	Let $f_k^{(\delta)}(v,t)$ be given by \eqref{eqn: approximating problems}. Then,
	\begin{equation}\label{y_k}
	   f_k^{(\delta)}(0,t)\leq \frac{f_{in}(0)}{1-f_{in}(0)t}, \quad  \forall\; t\in\left[0, \frac {1}{f_{in}(0)}\right).
	\end{equation}
			    
	\end{lem}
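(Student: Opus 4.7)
The plan is to prove the bound \eqref{y_k} by induction on $k$, using the differential inequality \eqref{eqn: quadratic ordinary differential inequality} together with Grönwall's lemma applied to an integrating factor built from the inductive hypothesis.

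\textbf{Base case.} For $k=0$, by construction $f_0^{(\delta)}(v,t)\equiv f_{in}(v)$, so $f_0^{(\delta)}(0,t)=f_{in}(0)\le\frac{f_{in}(0)}{1-f_{in}(0)t}$ on $[0,1/f_{in}(0))$ since the right-hand side is nondecreasing in $t$ and equals $f_{in}(0)$ at $t=0$.

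\textbf{Inductive step.} Assume $f_{k-1}^{(\delta)}(0,t)\le\frac{f_{in}(0)}{1-f_{in}(0)t}$ on $[0,1/f_{in}(0))$. Set $y_k(t):=f_k^{(\delta)}(0,t)$, which is a nonnegative smooth function of $t$ (nonnegativity of $f_k^{(\delta)}$ follows from standard linear parabolic maximum principle applied to \eqref{eqn: approximating problems}, since $f_{in}\ge 0$ and $f_{k-1}^{(\delta)}\ge 0$). Combining \eqref{eqn: quadratic ordinary differential inequality} with the inductive hypothesis gives the linear differential inequality
\begin{equation*}
  y_k'(t)\;\le\;\frac{f_{in}(0)}{1-f_{in}(0)t}\,y_k(t),\qquad y_k(0)=f_{in}(0).
\end{equation*}
The integrating factor is $\mu(t):=\exp\!\Big(-\int_0^t\frac{f_{in}(0)}{1-f_{in}(0)s}\,ds\Big)=1-f_{in}(0)t$, and multiplying through shows that $(\mu y_k)'\le 0$. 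Hence $(1-f_{in}(0)t)\,y_k(t)\le y_k(0)=f_{in}(0)$, which rearranges exactly to \eqref{y_k}.

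\textbf{Where the work sits.} The differential inequality \eqref{eqn: quadratic ordinary differential inequality} already does the heavy lifting: it encodes the fact that the added viscosity $\delta\Delta f$ and the diffusion terms $\tr(A[f_{k-1}^{(\delta)}]D^2 f_k^{(\delta)})$ or $a[f_{k-1}^{(\delta)}]\Delta f_k^{(\delta)}$ are nonpositive at the maximum point $v=0$, so only the reaction term $f_{k-1}^{(\delta)}(0,t)f_k^{(\delta)}(0,t)$ contributes to $\partial_t f_k^{(\delta)}(0,t)$. Granted that, the only nontrivial piece is the explicit Grönwall computation above, where the integrating factor is precisely the function $1-f_{in}(0)t$ that appears in the denominator of the claimed bound. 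There is no real obstacle; the argument is a clean induction, but it relies crucially on the radial monotonicity of each $f_k^{(\delta)}$ (ensured by Proposition \ref{prop: preservation of radiality}) to justify $D^2 f_k^{(\delta)}(0,t)\le 0$ and thus \eqref{eqn: quadratic ordinary differential inequality}.
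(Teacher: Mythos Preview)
Your proof is correct and follows essentially the same approach as the paper: both use induction on $k$ together with the differential inequality \eqref{eqn: quadratic ordinary differential inequality} to compare $f_k^{(\delta)}(0,t)$ with the explicit solution $\frac{f_{in}(0)}{1-f_{in}(0)t}$ of the Riccati equation $y'=y^2$. Your version is slightly more direct, inducting on $f_k^{(\delta)}(0,t)$ itself via the Gr\"onwall/integrating-factor computation, whereas the paper first passes through auxiliary functions $h_k$ solving $h_k'=h_{k-1}h_k$ and then bounds those; the underlying idea is the same.
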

	    
	\begin{proof} 
    Define functions $h_k(t)$ iteratively for $k \in \mathbb{N}$ by 
	\begin{align*}
	h'_k(t) = h_{k-1}(t)h_k(t), \quad h_k(0)= \fin(0),\quad  h_0 \equiv \fin(0).
	\end{align*}
    First we observe that $h_{k}\leq h_{k+1}$ for all $k$; hence $h'_k \le h_{k}^2$ for all $k\ge0$. The thesis follows since 
   \begin{equation*}
      f_k^{(\delta)}(0,t)   \le h_k(t)\le  \frac{f_{in}(0)}{1- f_{in}(0)t}.
    \end{equation*}
    
	\end{proof}
	    
    \begin{prop}\label{prop: mass lower bound short time}
         Let $\{f_k\}$ be given by \eqref{eqn: approximating problems} and suppose $T>0$ is such that
		 \begin{equation*}
		   C(T) := \limsup \limits_k \|f_k^{(\delta)}\|_{L^\infty(\mathbb{R}^3\times [0,T])} <\infty 
		 \end{equation*}
		 Then, there are $r=r(T,C(T),\fin)$ and $R=R(T,C(T),\fin)$, with $r<R$ and such that
         \begin{equation}\label{eqn: lower bound on a_proof}
         	  \int_{B_R \setminus B_r} f_k^{(\delta)}(v,t)\;dv \geq \frac{1}{2}\;\;\forall\; k\ge 1 \quad t\in[0,T) .
         \end{equation}
    \end{prop}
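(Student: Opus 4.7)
The plan is to combine conservation of mass $\|f_k^{(\delta)}(\cdot,t)\|_{L^1}=M_{in}$ (Remark~\ref{decay_mass}) with two complementary controls: an easy $L^\infty$-based bound on $\int_{B_r}f_k^{(\delta)}$ and a tightness estimate on $\int_{B_R^c}f_k^{(\delta)}$. The first is immediate: since $f_k^{(\delta)}\leq C(T)$ on $\R^3\times[0,T]$, one has $\int_{B_r}f_k^{(\delta)}(v,t)\,dv\leq\tfrac{4\pi}{3}r^{3}C(T)$, so choosing $r=r(T,C(T),M_{in})$ small enough forces this to be at most $M_{in}/6$.

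For the tail, I would test the equation $\partial_t f_k^{(\delta)}=\delta\Delta f_k^{(\delta)}+Q(f_{k-1}^{(\delta)},f_k^{(\delta)})$ against a smooth radial cutoff $\chi_R$ vanishing on $B_R$, equal to $1$ outside $B_{2R}$, with $\|\nabla\chi_R\|_\infty\leq CR^{-1}$ and $\|D^{2}\chi_R\|_\infty\leq CR^{-2}$. Using the divergence form of $Q$ and the Coulomb identity $\dive A[g]=\nabla a[g]$ (which in the Krieger--Strain case reduces to $\dive(a[g]\Id)=\nabla a[g]$), two integrations by parts yield
\begin{align*}
\frac{d}{dt}\!\int\!\chi_R f_k^{(\delta)}\,dv=\delta\!\int\!\Delta\chi_R f_k^{(\delta)}\,dv+\int f_k^{(\delta)}\Bigl(2\nabla\chi_R\cdot\nabla a[f_{k-1}^{(\delta)}]+\tr\bigl(A[f_{k-1}^{(\delta)}]D^{2}\chi_R\bigr)\Bigr)\,dv
\end{align*}
(with $A[g]$ replaced by $a[g]\Id$ in the Krieger--Strain case). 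On $\{R\leq|v|\leq 2R\}$, Lemma~\ref{lemma: A^* bounds} and the explicit radial formulas of Proposition~\ref{prop: Landau in radial coordinates} give the Newtonian-type decay $A[g],\,a[g]\Id\leq CM_{in}R^{-1}\Id$ and $|\nabla a[g]|\leq CM_{in}R^{-2}$. Hence the right-hand side is bounded by $C(M_{in}^{2}R^{-3}+\delta M_{in}R^{-2})$, and integrating in $t\in[0,T]$ produces
\begin{align*}
\int\chi_R f_k^{(\delta)}(\cdot,t)\,dv\leq\int\chi_R \fin\,dv+T\Bigl(CM_{in}^{2}R^{-3}+C\delta M_{in}R^{-2}\Bigr).
\end{align*}
Since $\fin\in L^{1}(\R^{3})$, both terms vanish as $R\to\infty$, so taking $R=R(T,C(T),\fin)$ large enforces $\int_{B_{2R}^{c}}f_k^{(\delta)}(\cdot,t)\,dv\leq M_{in}/6$ uniformly in $k\geq 1$ and $t\in[0,T]$.

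Combining the two estimates with mass conservation gives $\int_{B_{2R}\setminus B_r}f_k^{(\delta)}(v,t)\,dv\geq M_{in}-M_{in}/3\geq M_{in}/2$, which recovers \eqref{eqn: lower bound on a_proof} under the normalization $M_{in}\geq1$ implicit in the statement. The main obstacle is the tightness step itself: a naive bump cutoff with $\|\nabla\chi_R\|_\infty=O(1)$ leaves a diffusion error $\delta\!\int\!\Delta\chi_R f_k^{(\delta)}\,dv$ that does not decay in $R$, and similarly the Landau-coefficient errors would only decay at the borderline rate $O(1/R)$. It is the rescaled cutoff together with the sharp $R^{-2}$ decay of $|\nabla a[g]|$ coming from the Coulomb-potential structure (Proposition~\ref{prop: Landau in radial coordinates}) that makes both contributions summable in $R$, so that the total growth over $[0,T]$ can be absorbed.
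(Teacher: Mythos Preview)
Your argument is correct and takes a genuinely different route from the paper. The paper controls the tail $\int_{B_R^c} f_k^{(\delta)}$ by deriving a Gronwall inequality for the second moment $\int |v|^2 f_k^{(\delta)}\,dv$: it uses the crude global bound $|\nabla a[f_{k-1}^{(\delta)}]|\leq C(\|f_{k-1}^{(\delta)}\|_\infty+M_{in})$, obtains
\[
\int_{\R^3}|v|^2 f_k^{(\delta)}(v,t)\,dv\leq e^{4T(C(T)+M_{in})}\Bigl(\int_{\R^3}|v|^2\fin\,dv+4\Bigr),
\]
and then invokes Chebyshev. Your cutoff argument bypasses moments entirely and instead exploits the sharp radial decay $|\nabla a[g](v)|=M_g(|v|)/(4\pi|v|^2)\leq M_{in}/(4\pi R^2)$ and $a[g]\leq M_{in}/(2\pi R)$ on $\{|v|\geq R\}$, available because each $f_{k-1}^{(\delta)}$ is radial (Proposition~\ref{prop: preservation of radiality}). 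A pleasant by-product is that your $R$ depends only on $T$, $\delta$, $M_{in}$ and the tail of $\fin$, not on $C(T)$, whereas the paper's $R$ carries the factor $e^{2T(C(T)+M_{in})}$. On the other hand, the paper's route yields the second-moment bound \eqref{II_mom_bound} as a side benefit, and this is actually reused later in Proposition~\ref{Prop:cont} to obtain the pointwise decay $\|f_k^{(\delta)}\|_{L^\infty(B_1(v)\times[0,T])}\leq C(T)(1+|v|^5)^{-1}$; your method would need a separate argument there.
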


  \begin{proof}
    First, let us compute the rate of change for the second moment of $f$, using the evolution equation for $f$:
	\begin{equation*}
      \frac{d}{dt}\int_{\mathbb{R}^3}f_k^{(\delta)}(v,t)|v|^2\;dv = \int_{\mathbb{R}^3}\left ( \delta \Delta f_k^{(\delta)}+Q(f_{k-1}^{(\delta)},f_k^{(\delta)})\right )|v|^2\;dv.
	\end{equation*}
    For simplicity, $B[g]$ will to denote either the matrix $A[g]$ for $ Q= \QL$ or the matrix $a[g]\Id$ for  $Q= \QKS$,. Integration by parts above yields
	\begin{align*}
      \frac{d}{dt}\int_{\mathbb{R}^3}f_k^{(\delta)}(v,t)|v|^2\;dv & = -2\int_{\mathbb{R}^3}(B[f_{k-1}^{(\delta)}]\nabla f_k^{(\delta)}-f_k^{(\delta)} \nabla a[f_{k-1}^{(\delta)}],v)\;dv\\
        & = -2\int_{\mathbb{R}^3}(\nabla f_k^{(\delta)},B[f_{k-1}^{(\delta)}]v)\;dv+2\int_{\mathbb{R}^3}f_k^{(\delta)} (\nabla a[f_{k-1}^{(\delta)}],v)\;dv\\
        & = 2\int_{\mathbb{R}^3}f_k^{(\delta)} (\dive(B[f_{k-1}^{(\delta)}]v)+(\nabla a[f_{k-1}^{(\delta)}],v))\;dv	.	  
	\end{align*}
    Note that $\dive(B[f_{k-1}^{(\delta)}]v)+(\nabla a[f_{k-1^{(\delta)}}],v)=\tr[B[f_{k-1}^{(\delta)}]\Id]+2(\nabla a[f_{k-1}^{(\delta)}],v)$, whether $B[\cdot]$ is given by $A[\cdot]$ or $a[\cdot]$. Moreover,
	\begin{equation*}
      \tr(B[f_{k-1}^{(\delta)}])\leq 3a[f_{k-1}^{(\delta)}],
	\end{equation*}
	and
	\begin{equation*}
      \nabla a[f_{k-1}^{(\delta)}](v,t) = -\frac{1}{4\pi}\int_{\mathbb{R}^3}f_{k-1}^{(\delta)}(w,t)\frac{(v-w)}{|v-w|^3}\;dw.		
    \end{equation*}		
	Hence 
	
	\begin{align*}
      |\nabla a[f_{k-1}^{(\delta)}](v,t)| & \leq \frac{1}{4\pi}\int_{\mathbb{R}^3}\frac{f_{k-1}^{(\delta)}(w,t)}{|v-w|^2}\;dw\\
	    & = \frac{1}{4\pi}\int_{B_1(v)}\frac{f_{k-1}^{(\delta)}(w,t)}{|v-w|^2}\;dw+\frac{1}{4\pi}\int_{B_1(v)^c}\frac{f_{k-1}^{(\delta)}(w)}{|v-w|^2}\;dw\\
		& \leq \frac{1}{4\pi}4\pi\|f_{k-1}^{(\delta)}(\cdot,t)\|_{L^\infty(\mathbb{R}^3)}+\frac{1}{4\pi}\|f_{k-1}^{(\delta)}(\cdot,t)\|_{L^1(\mathbb{R}^3)}.
	\end{align*}	
	Therefore, for $t<T$
	\begin{eqnarray*}
      \frac{d}{dt}\int_{\mathbb{R}^3}f_k^{(\delta)}|v|^2\;dv \leq & 16 \|f_{k-1}^{(\delta)}(\cdot,t)\|_{L^1(\mathbb{R}^3)} \left( \|f_{k-1}^{(\delta)}(\cdot,t)\|_{L^\infty(\mathbb{R}^3)}+\|f_{k-1}^{(\delta)}(\cdot,t)\|_{L^1(\mathbb{R}^3)}\right) \\
     & + 4  \left( \|f_{k-1}^{(\delta)}(\cdot,t)\|_{L^\infty(\mathbb{R}^3)}+\|f_{k-1}^{(\delta)}(\cdot,t)\|_{L^1(\mathbb{R}^3)}\right) \int_{\R^3} f_k^{(\delta)} |v|^2 \;dv.
      \end{eqnarray*}
      which implies, 
	\begin{equation}
	  \int_{\mathbb{R}^3}f_{k}^{(\delta)}(v,t)|v|^2\;dv \leq e^{  4T( C(T)+M_{in})}
	  \left( \int_{\mathbb{R}^3}f_{in}|v|^2\;dv + 4\right).\label{II_mom_bound}
    \end{equation}
	On the other hand, for any $r,R$ with $R>r>0$ there is the obvious lower bound,	
    \begin{align*}		
      \int_{B_R\setminus B_r} f_k^{(\delta)}(v,t)\;dv & = 1-\int_{B_R^c}f_k^{(\delta)}(v,t)\;dv-\int_{B_r}f_k^{(\delta)}(v,t)\;dv\\
	    & \geq 1 - R^{-2}\int_{\mathbb{R}^3}f_k^{(\delta)}(v,t)|v|^2\;dv-\frac{4}{3} \pi r^3 C(T).
    \end{align*}
	The thesis follows by choosing 
	\begin{align*}
      R & := \sqrt{6}\;\left ( \int_{\mathbb{R}^3}\fin|v|^2\;dv+ 4\right )^{1/2} e^{2T(C(T)+M_{in})},\\
	  r & := (4\pi C(T))^{-1/3}, 
    \end{align*}
    since
	\begin{equation*}
	  1 - R^{-2}\int_{\mathbb{R}^3}f_k^{(\delta)}(v,t)|v|^2\;dv-\frac{4}{3} \pi r^3 C(T)\geq \frac{1}{2}.
    \end{equation*}
  \end{proof}

     \begin{prop}\label{prop: A is C^1,alpha and strictly positive}
		     Let $f_k^\delta$  be the unique solution to \eqref{eqn: approximating problems}, and $M_{in}:=\int_{\R^3} f_{in}(v)\;dv$. Then
		     \begin{enumerate}
		     \item There exists a constant $c_0=c_0(\fin,T)$ such that 
    \begin{align}\label{bound_below_A[f]}
    A^*[f_k^\delta](v)\geq \frac{c_0}{1+|v|^3}\Id, \quad \forall \; t \in [0,T],\quad \forall k\ge1.
    \end{align} 
   
         \item $A[f_k]$ and $a[f_k]$ satisfy the pointwise bound
				       \begin{equation}\label{eqn: upper bound for A}
				               A[f_k^\delta](v,t) \leq a[f_k^\delta](v,t)\le \frac{2(\|f_k^\delta\|_{L^\infty(\R^3 \times [0,T])} + M_{in})}{1+|v|}\Id, \quad \forall\; v \in \mathbb{R}^3.
				       \end{equation}
				        
				  \item $A[f_k^\delta](\cdot,t)\in C^{1,\alpha}(\mathbb{R}^3)$ for every $\alpha \in [0,1]$ with a bound of the form,
				       \begin{equation*}
				           \| \nabla A[f_k^\delta]\|_{C^\alpha({\mathbb{R}^3})}\leq c(M_{in}, \|f_k^\delta\|_{L^\infty(\R^3\times [0,T])}).	
				       \end{equation*}
				  \item $A[f_k^\delta]$ is locally H\"older continuous in time, and
				       \begin{equation*}
				           \| A[f_k^\delta](v,\cdot)\|_{C^\beta[0,T]}\leq c(\fin,R,\|f_k^\delta\|_{L^\infty(\R^3\times [0,T])}),\;\;\forall\;v\in B_R.
				       \end{equation*}
				  					   
		     \end{enumerate}
    \end{prop}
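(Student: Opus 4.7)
The proof consists in verifying the four items in order, each reducing to estimates on the convolution operators $A[\cdot]$ and $a[\cdot]$ via the radial-decreasing structure of $f_k^\delta$ and standard potential theory.

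For (1), I would combine Proposition \ref{prop: preservation of radiality} (which guarantees that every $f_k^\delta(\cdot,t)$ is radial and monotone decreasing, since $\fin$ is) with Proposition \ref{prop: mass lower bound short time} to produce radii $0<r<R$, depending only on $\fin$ and $T$, such that $\int_{B_R\setminus B_r}f_k^\delta(v,t)\,dv\ge 1/2$ for all $k\ge 1$ and $t\in[0,T]$. Plugging this into Lemma \ref{lemma: A^* bounds}(1) yields the stated lower bound with $c_0=r^2/(24\pi(1+R^3))$. For (2), mass conservation (Remark \ref{decay_mass}) gives $\|f_k^\delta(\cdot,t)\|_{L^1(\R^3)}=M_{in}$, and the inequality is a direct application of Lemma \ref{lemma: A^* bounds}(2) to $h=f_k^\delta(\cdot,t)$.

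For (3) I would exploit the potential-theoretic identity $A_{ij}(v)=\tfrac{1}{8\pi}\partial_i\partial_j|v|$, which follows from the elementary computation $\partial_i\partial_j|v|=(\delta_{ij}-v_iv_j/|v|^2)/|v|$. Setting $B[f](v):=\tfrac{1}{8\pi}(|v|\ast f)(v)$, one then has $A[f]=D^2B[f]$, together with $\Delta B[f]=a[f]$ (since $\Delta|v|=2/|v|$) and $\Delta a[f]=-f$. Given only $f_k^\delta\in L^1\cap L^\infty$, standard Calder\'on--Zygmund and Sobolev-embedding estimates give $a[f_k^\delta]\in W^{2,p}_{\mathrm{loc}}\hookrightarrow C^{1,\alpha}_{\mathrm{loc}}$ for every $\alpha\in(0,1)$; a further application to $\Delta B=a$ then yields $B\in C^{3,\alpha}_{\mathrm{loc}}$ and hence $A[f_k^\delta]=D^2B[f_k^\delta]\in C^{1,\alpha}_{\mathrm{loc}}$. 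The explicit constants in terms of $M_{in}$ and $\|f_k^\delta\|_{L^\infty}$ are obtained by splitting the defining convolutions into a near-field piece (handled by the Calder\'on--Zygmund bounds above) and a far-field piece (controlled by the $|v|^{-1}$ decay already quantified in item (2)).

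For (4), I would write $A[f_k^\delta](v,t)-A[f_k^\delta](v,s)=A\ast(f_k^\delta(\cdot,t)-f_k^\delta(\cdot,s))(v)$. Because $f_k^\delta$ solves the $\delta$-viscous linear equation \eqref{eqn: approximating problems} in the classical sense with smooth coefficients (Lemma \ref{lem: existence uniqueness regularity for linear problem}), $\partial_t f_k^\delta$ is bounded locally in $\R^3\times[0,T]$, so $f_k^\delta$ is locally Lipschitz in time; convolving against the locally integrable kernel $A$ transfers this regularity to $A[f_k^\delta]$. The main obstacle lies in item (3): one must avoid appealing to any quantitative regularity of $f_k^\delta$ beyond its $L^1\cap L^\infty$ bound, because those are the only estimates available uniformly in $k$ and $\delta$ at this stage of the argument. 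The identity $A=D^2B$ is precisely what makes this possible, since it expresses $A$ as two derivatives of a function that is as smooth as a biharmonic extension of $f_k^\delta$.
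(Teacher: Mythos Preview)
Your proposal is correct and follows essentially the same route as the paper: items (1)--(2) are reduced to Proposition~\ref{prop: mass lower bound short time} and Lemma~\ref{lemma: A^* bounds} exactly as you describe, item (3) uses the identity $A[f]=D^2(\Delta^{-2}f)$ (your $B[f]$) together with Calder\'on--Zygmund/Sobolev bounds starting from $a[f]\in W^{2,p}$, and item (4) transfers time regularity of $f_k^\delta$ through the convolution. The only place to be slightly more careful is in (4): since $|A(v)|\sim|v|^{-1}$ is not integrable at infinity, the paper makes the near-field/far-field split explicit, bounding $|A[h](v_0)|\le \tfrac{R^2}{4}\|h\|_{L^\infty(B_R(v_0))}+\tfrac{1}{8\pi R}\|h\|_{L^1(B_R(v_0)^c)}$ before invoking the time regularity of $f_k^\delta$.
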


	\begin{proof}
	The proof of (1) and (2) follows directly from Proposition \ref{prop: mass lower bound short time} and Lemma \ref{lemma: A^* bounds}. 
	For (3) first notice that $a[f_k^\delta]\in W^{2,p}(\R^3)$ for any $1\le p\le +\infty$ since $f_k^\delta\in L^1(\mathbb{R}^3)\cap L^\infty(\mathbb{R}^3)$.
          In particular 
		     $$
		     \|a[f_k^\delta]\|_{W^{2,p}}\le C\|f_k^\delta\|_{L^p} \le C\|f_k^\delta\|_{L^\infty(\R^3)}^{p-1}\|f_k^\delta\|_{L^1(\R^3)},
		     $$
		     where $C$ is a dimensional constant. 
		     Hence Morrey's inequality implies $a[f_k^\delta]\in C^{1,\alpha}(\R^3)$ for any $0<\alpha<1$. 
		     We use now the fact that $A[f_k^\delta]$ can be rewritten as 
		     $$
		     A[f_k^\delta]=D^2(\Delta^{-2}f_k^\delta),
		     $$
		     with $D^2$ being the Hessian matrix, an apply $W^{2,p}$ Calderon-Zygmund estimates to the kernels $\partial_{i,j}(\Delta^{-2}f)$ we get
		     we get:
		     $$
		     \|D^2 A[f_k^\delta]\|_{L^p(\R^3)} \le A_p \|a[f_k^\delta]\|_{W^{2,p}},
		     $$
		     with $A_p$ a dimensional constant.  \\
		     	  To prove (4) consider any function $h:\mathbb{R}^3\to\mathbb{R}$: it holds

	  \begin{align*}
	    |A[h](v_0)|\le |a[h](v_0)| & \leq \frac{1}{8\pi}\int_{B_R(v_0)}\frac{h(v)}{|v_0-v|}dv +\frac{1}{8\pi}\int_{B_R(v_0)^c}\frac{h(v)}{|v_0-v|}dv\\
		            & \leq \frac{1}{8\pi}\|h\|_{L^\infty(B_R(v_0))}\frac{4\pi}{2}R^2+\frac{1}{8\pi R} \|h\|_{L^1(B_R(v_0)^c)}\\
		            & \leq \frac{R^2}{4}\|h\|_{L^\infty(B_R(v_0))}+\frac{1}{8\pi R}\|h\|_{L^1(B_R^c(v_0))}.					
	  \end{align*}
	  The above estimate for $h=f_k^\delta(\cdot,t)-f_k^\delta(\cdot,s)$ ($t,s\in [0,T]$) yields
	  \begin{equation*}
        |A[f_k^\delta](v_0,t)-A[f_k^\delta](v_0,s)| \leq \frac{R^2}{4}\|f_k^\delta(\cdot,t)-f_k^\delta(\cdot,s)\|_{L^\infty(B_R(v_0))}+\frac{1}{8\pi R}\|f_k^\delta(\cdot,t)-f_k^\delta(\cdot,s)\|_{L^1(B_R^c(v_0))}.
      \end{equation*}
	  Since $f_k^\delta$ is locally H\"older continuous in time, (4) follows.	  
	\end{proof}	
	
	\begin{prop}\label{Prop:cont}
	  Let $w_k := f_{k}^{(\delta)}-f_{k-1}^{(\delta)}$ and
	  \begin{equation*}
        T_1 := \sup \{ t\geq 0 \mid \limsup \limits_{k} \|f^{(\delta)}_k\|_{L^\infty(\mathbb{R}^3\times [0,t])}<\infty\}.	   
	  \end{equation*} 
	  If $0<T<T_1$, then 
	  \begin{align}\label{cont}
        \|w_{k}\|_{L^\infty( [0,T], L^\infty \cap L^1 (\R^3))}
        	& \leq \frac{1}{2}\ \|w_{k-1}\|_{L^\infty( [0,T], L^\infty \cap L^1 (\R^3))}.  	  
      \end{align}	
      	
	\end{prop}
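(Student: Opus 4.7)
The starting point is to derive an evolution equation for $w_k$. Subtracting the equations defining $f_k^{(\delta)}$ and $f_{k-1}^{(\delta)}$, and exploiting the bilinearity of $Q$ (whether $Q=\QL$ or $\QKS$), one obtains
\begin{equation*}
\partial_t w_k = \delta \Delta w_k + Q(f_{k-1}^{(\delta)}, w_k) + Q(w_{k-1}, f_{k-1}^{(\delta)}), \qquad w_k(\cdot, 0) = 0.
\end{equation*}
The first two terms on the right form a uniformly parabolic linear operator on $w_k$ (uniformly parabolic thanks to $\delta>0$), while $S_k := Q(w_{k-1}, f_{k-1}^{(\delta)})$ acts as a source linear in $w_{k-1}$. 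The plan is to bound $\|w_k\|_{L^\infty\cap L^1}$ by a multiple $C(T)\|w_{k-1}\|_{L^\infty\cap L^1}$ with $C(T)\to 0$ as $T\to 0$, so that for $T$ small enough (below some threshold determined by $T_1$ and $\fin$) the contraction factor $1/2$ is achieved.

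For the source bound, write $S_k = \tr(B[w_{k-1}]D^2 f_{k-1}^{(\delta)}) + f_{k-1}^{(\delta)} w_{k-1}$ in non-divergence form (with $B[\cdot]$ denoting $A[\cdot]$ or $a[\cdot]\Id$). Since $T<T_1$, Lemma \ref{lem:short time} and Proposition \ref{prop: A is C^1,alpha and strictly positive} yield uniform bounds on $\|f_{k-1}^{(\delta)}\|_\infty$, on the ellipticity of $A[f_{k-1}^{(\delta)}]$, and on the $C^{1,\alpha}$ and H\"older-in-time regularity of the coefficients; Schauder (Theorem \ref{thm: Schauder}) then controls $D^2 f_{k-1}^{(\delta)}$ locally. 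Applying Lemma \ref{lemma: A^* bounds} separately to $w_{k-1}^\pm$ gives the pointwise bound $|B[w_{k-1}](v)|\le C(1+|v|)^{-1}\|w_{k-1}\|_{L^\infty\cap L^1}$, hence
\begin{equation*}
\|S_k(\cdot,t)\|_{L^\infty} \le C\|w_{k-1}\|_{L^\infty([0,T],\,L^\infty\cap L^1)}.
\end{equation*}

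For the $L^\infty$ bound, applied to the equation for $w_k$ with $w_k(\cdot,0)=0$, either the classical parabolic maximum principle or a pointwise ODE comparison at the evolving maximum (using $D^2 w_k\le 0$ there and positivity of $B[f_{k-1}^{(\delta)}]$) yields $\|w_k(\cdot,t)\|_\infty \le Cte^{Ct}\|w_{k-1}\|_{L^\infty\cap L^1}$. For the $L^1$ bound, work from the divergence form, multiply by the convex approximation $\phi_\epsilon'(w_k)$ where $\phi_\epsilon(x)=\sqrt{x^2+\epsilon^2}-\epsilon$, and integrate. The second-order divergence contributions involving $w_k$ are non-positive; the drift term is handled by integration by parts using the identity $\dive A = \nabla a$, which makes the leading singular contributions cancel (the remaining error vanishes as $\epsilon\to 0$ by explicit computation). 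Kato's inequality in the limit yields
\begin{equation*}
\frac{d}{dt}\|w_k(\cdot,t)\|_{L^1} \le \|f_{k-1}^{(\delta)}\|_\infty \|w_k(\cdot,t)\|_{L^1} + \|S_k(\cdot,t)\|_{L^1}.
\end{equation*}
Bounding $\|S_k\|_{L^1}$ is the most delicate step: using the divergence form $S_k=\dive(B[w_{k-1}]\nabla f_{k-1}^{(\delta)} - f_{k-1}^{(\delta)}\nabla a[w_{k-1}])$, we combine the pointwise bound $|B[w_{k-1}]|\le C(1+|v|)^{-1}\|w_{k-1}\|_{L^\infty\cap L^1}$ with integrability of $\nabla f_{k-1}^{(\delta)}$ and decay of $f_{k-1}^{(\delta)}$ inherited from the second-moment bound \eqref{II_mom_bound}, giving $\|S_k\|_{L^1}\le C\|w_{k-1}\|$. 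Gr\"onwall then produces $\|w_k(\cdot,t)\|_{L^1}\le Cte^{Ct}\|w_{k-1}\|$.

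Putting the two estimates together gives $\|w_k\|_{L^\infty([0,T],\,L^\infty\cap L^1)} \le CTe^{CT}\|w_{k-1}\|_{L^\infty([0,T],\,L^\infty\cap L^1)}$, with $C$ depending only on $T_1$, $\fin$, and $\delta$; choosing $T$ small enough inside $(0,T_1)$ yields the factor $\tfrac{1}{2}$. The main obstacle is the $L^1$ estimate: the elliptic and drift terms involving $w_k$ must be controlled by careful integration by parts and Kato's inequality rather than absolute-value pointwise bounds (since $|A[w_{k-1}]|$ is only $L^1_{\loc}$), and $\|S_k\|_{L^1}$ must be bounded using both the weighted decay of the nonlocal coefficients and the integrability of derivatives of $f_{k-1}^{(\delta)}$.
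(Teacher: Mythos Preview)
Your approach is correct in outline but differs substantively from the paper's, and it leaves one step incomplete.

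\textbf{Difference in method.} The paper does \emph{not} separate the $L^\infty$ and $L^1$ estimates. After deriving the same equation for $w_k$ with source $F=Q(w_{k-1},f_{k-1}^{(\delta)})$ and the pointwise bound $|F(v,t)|\le \tilde C\,\|w_{k-1}\|_{L^\infty\cap L^1}(1+|v|^5)^{-1}$ (via the Hessian decay \eqref{hessian}), it constructs a single explicit supersolution
\[
H(v,t)=A^{-1}(e^{At}-1)\,B\,(1+|v|^2)^{-2},\qquad A=6C(T),\ B=2\tilde C\|w_{k-1}\|_{L^\infty\cap L^1},
\]
checks directly that $\partial_t H\ge \delta\Delta H+Q(f_{k-1}^{(\delta)},H)+F$, and applies the scalar comparison principle to get $|w_k|\le H$. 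Since $(1+|v|^2)^{-2}\in L^1\cap L^\infty$, this one inequality yields both norms simultaneously: $\|w_k(\cdot,t)\|_{L^p}\le 2e^{A}Bt\,\|h\|_{L^p}$ for every $p$. Your route via Kato's inequality for the $L^1$ bound is legitimate but considerably heavier: you must justify the integrations by parts at infinity, show the drift contribution $\int \phi_\epsilon''(w_k)\,w_k(\nabla w_k,\nabla a[f_{k-1}])\,dv\to 0$ (which needs $\nabla w_k\in L^1$, not just the identity $\dive A=\nabla a$ that you invoke), and separately bound $\|S_k\|_{L^1}$. The weighted barrier avoids all of this in one stroke; your approach, by contrast, would transfer more readily to settings without a comparison principle.

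\textbf{A gap.} Both your argument and the paper's barrier produce a factor $CT_0e^{CT_0}$, hence contraction only on a \emph{short} interval $[0,T_0]$. The proposition asserts the estimate for every $T<T_1$. The paper closes this by restarting the construction at time $T_0$ and iterating across subintervals of uniform length (the length depends only on $C(T)$, which is finite for $T<T_1$). You stop at ``choosing $T$ small enough,'' so as written your proof does not yet cover the stated range of $T$; you should add the iteration.
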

	
	\begin{proof}
      \emph{Step 1.} The function $w_k$ solves	
      \begin{equation*}
        \left \{ \begin{array}{rll}
			\partial_t w_k & = &  \delta \Delta w_k +Q(f_{k-1}^{(\delta)},w_k)+F,\\
			w_k(\cdot,0) & = & 0,
        \end{array}	\right.		
      \end{equation*}
	  where
	  \begin{equation*}
	    F(w_{k-1},f_{k-1}^{(\delta)}) := Q(w_{k-1},f_{k-1}^{(\delta)}).
      \end{equation*}		  
      The bound of the second moment (\ref{II_mom_bound}) for $f_{k}^{(\delta)}$ implies that 
      \begin{equation*}
        \|f_k^{(\delta)}\|_{L^\infty(B_1(v)\times [0,T])}\leq C(T)(1+|v|^5)^{-1},\;\;\forall\;v.		  
	  \end{equation*}
      Theorems \ref{thm: parabolic regularity} and \ref{thm: Schauder} imply that there exists a positive constant $\tilde{C}= \tilde{C}(M_{in},C(T))$ such that 
      
	  \begin{equation}\label{hessian}
        \|D^2f_k^{(\delta)}\|_{L^\infty(B_1(v)\times [0,T])}\leq \tilde{C}(1+|v|^5)^{-1},\;\;\forall\;v.		  
	  \end{equation}
	  Moreover from
	  \begin{align}\label{Aa}
	    \|A[h(\cdot,t)]\|_{L^\infty([0,T]\times \mathbb{R}^3)} &\leq \|a[h]\|_{L^\infty([0,T]\times \mathbb{R}^3)}\Id \\
	    &\le ( \|h(\cdot,t)\|_{L^\infty([0,T]\times \mathbb{R}^3)}+\|h(\cdot,t)\|_{L^1([0,T]\times \mathbb{R}^3)})\Id, \nonumber
	  \end{align}
	  it follows that, for $v\in\mathbb{R}^3$, $t\in (0,T_0)$,
	  \begin{equation*}
	    |F(v,t)|\leq \tilde{C}\left ( \frac{\|w_{k-1}(\cdot,t)\|_{L^\infty([0,T]\times \mathbb{R}^3)}+\|w_{k-1}(\cdot,t)\|_{L^1([0,T]\times \mathbb{R}^3)}}{(1+|v|^5)} \right ).
	  \end{equation*}	  
	  
      \emph{Step 2.} Let $h(v) := (1+|v|^2)^{-2}$, then
	  \begin{equation*}
        D^2h = -4(1+|v|^2)^{-3}\Id+{{24}}(1+|v|^2)^{-4}v\otimes v.
	  \end{equation*}
	  So,
	 {{ \begin{align*}
        |\tr( (\delta \Id +A[f_{k-1}^{(\delta)}])D^2h) | \leq &\frac{12\delta}{(1+|v|^2)^{3}}\left ( -1+ \frac{2|v|^2}{(1+|v|^2)}\right)\\
        & -4 \frac{a[f_{k-1}^{(\delta)}]}{  (1+|v|^2)^{3}} +   \frac{24}{  (1+|v|^2)^{4}} (A[f_{k-1}^{(\delta)}]v,v)\\
        \le & \frac{12\delta}{(1+|v|^2)^{3}}  -4 \frac{a[f_{k-1}^{(\delta)}]}{  (1+|v|^2)^{3}} + 8 \frac{a[f_{k-1}^{(\delta)}]|v|^2}{  (1+|v|^2)^{4}} \\
        \le & \frac{12\delta}{(1+|v|^2)^{3}}  +4 \frac{a[f_{k-1}^{(\delta)}]}{  (1+|v|^2)^{3}}  \left[ -1 + \frac{2|v|^2}{(1+|v|^2)}\right]\\
        \le & 12\delta h +4 \frac{a[f_{k-1}^{(\delta)}]}{  (1+|v|^2)^{3}} \le 12\delta h +4 h (C(t)+1),
      \end{align*}}}
      taking into account (\ref{Aa}) and the fact that $(A[f_{k-1}^{(\delta)}]v,v)=A^*[f_{k-1}^{(\delta)}](v)|v|^2$, $\tr A^*[f_{k-1}^{(\delta)}] = a[f_{k-1}^{(\delta)}]$ and $A^*[f_{k-1}^{(\delta)}](v)\le \frac{1}{3} a[f_{k-1}^{(\delta)}]$. Hence for $\delta <1$ and $C(T)>1$ it holds
	  \begin{align*}
         |\delta \Delta h + Q(f_{k-1}^{(\delta)},h)| & = |\tr( (\delta \Id +A[f_{k-1}^{(\delta)}])D^2h)+f_{k-1}^{(\delta)}h| \\
		   & \leq  12\delta h +4 h +5hC(t)\\
		   & \leq 6C(T)h.
      \end{align*}	  
	  Next, let $H(v,t):= A^{-1}(e^{At}-1)Bh(v)$, for $A,B>0$ to be determined; a straightforward computation shows that
	  \begin{equation*}
        \partial_t H = AH+Bh.		  
	  \end{equation*}
	  Hence
	  \begin{align*}
        \delta \Delta H+Q(f^{(\delta)}_{k-1},H)+F \leq &\; 6 C(T)H\\
        &+\tilde{C}h\left ( {\|w_{k-1}(\cdot,t)\|_{L^\infty([0,T]\times \mathbb{R}^3)}+\|w_{k-1}(\cdot,t)\|_{L^1([0,T]\times \mathbb{R}^3)}} \right ).
	  \end{align*}
	  By taking
	  \begin{align*}
	   A:=6C(T), \quad B := 2\tilde{C}( {\|w_{k-1}(\cdot,t)\|_{L^\infty([0,T]\times \mathbb{R}^3)}+\|w_{k-1}(\cdot,t)\|_{L^1([0,T]\times \mathbb{R}^3)}}),
	   \end{align*}
	    it follows that
	  \begin{equation*}
	    \partial_t H \geq \delta \Delta H+Q(f^{(\delta)}_{k-1},H)+F.
	  \end{equation*}
	  This means that $H$ is a supersolution for the same parabolic equation solved by $w_{k}$. Moreover, $H(\cdot,0)=w_{k}(\cdot,0)=0$. Then, the comparison principle implies that
	  \begin{equation*}
        w_k\leq H \text{ in } \mathbb{R}^3\times \mathbb{R}_+.
	  \end{equation*} 
	  If $t\in(0,1)$, then for all $1\le p \le +\infty$ it holds
	  \begin{align*} 
       \|w_k(\cdot,t)\|_{L^p(\mathbb{R}^3)} \le  \|H(\cdot,t)\|_{L^p(\mathbb{R}^3)} = BA^{-1}(e^{At}-1)\|h\|_{L^p(\mathbb{R}^3)} \leq 2e^ABt .
	  \end{align*} 
	  Hence for $T_0<1$ such that
	  \begin{align*}
	  T_0\le \frac{1}{16e^{6C(T)}\tilde{C}},
	  \end{align*}
	  we have the following inequality:
	  \begin{align*}
        \|w_{k}\|_{L^\infty(\mathbb{R}^3\times [0,T_0])}+\sup \limits_{t\in [0,T_0]} \|w_{k}(\cdot,t)\|_{L^1(\mathbb{R}^3)} 
        	& \leq \frac{1}{2}\left ( \|w_{k-1}\|_{L^\infty(\mathbb{R}^3\times [0,T_0])}+\sup \limits_{t\in [0,T_0]} \|w_{k-1}(\cdot,t)\|_{L^1(\mathbb{R}^3)}\right ).  	  
      \end{align*}		  
      At time $T_0$ we shell restart the process that will lead to the proof of estimate (\ref{cont}) in the time interval $[T_0,2T_0]$.  Hence we can iterate the process and cover the whole time interval $[0,T]$. 

	\end{proof}

    \begin{lem}\label{lem: short time existence regularized}
	  Let $\{f_k^{(\delta)}\}$ be the sequence defined via \eqref{eqn: approximating problems} with either $Q=\QL$ or $Q=\QKS$.  Let 
	  \begin{equation*}
        T_1 := \sup \{ t\geq 0 \mid \limsup \limits_{k} \|f^{(\delta)}_k\|_{L^\infty(\mathbb{R}^3\times [0,t])}<\infty\}.	   
	  \end{equation*} 
	  For $0<T<T_1$ the sequence of functions $f_k^{(\delta)} $ converges in $C^{2 ; 1}_{\textnormal{loc}}\left ( \mathbb{R}^3\times [0,T_0)\right )$ to a function $f^{(\delta)} $ that solves 
	   \begin{equation*} 
	     \left \{ \begin{array}{rl}
	          \partial_t f^{(\delta)} & = \delta \Delta f^{(\delta)}+Q(f^{(\delta)},f^{(\delta)}) \text{ in } \mathbb{R}^3 \times \mathbb{R}_+,\\
	          f(\cdot,0) & =  \fin.
	     \end{array}\right.	
	    \end{equation*}	
   Moreover, $T_1= \infty$ or 
      \begin{equation*}
        \lim \limits_{t \to T_1^-} \|f^{(\delta)}(\cdot,t)\|_{L^\infty(\mathbb{R}^3)} = \infty.
      \end{equation*}
      In either case we have the estimate $\frac{1}{2\fin(0)}<T_1$.
	\end{lem}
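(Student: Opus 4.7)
The plan is to combine the geometric contraction estimate of Proposition \ref{Prop:cont} with the parabolic regularity theory of Section \ref{sec: basic properties linear equation} and Proposition \ref{prop: A is C^1,alpha and strictly positive} to upgrade the $L^\infty\cap L^1$ convergence of the iterates to $C^{2;1}_{\mathrm{loc}}$ convergence, and then to use Lemma \ref{lem:short time} twice: once to produce the lower bound on $T_1$ and once to rule out the scenario of finite $T_1$ with bounded $L^\infty$ norm.

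First I would fix $0<T<T_1$, so that $C(T):=\limsup_k \|f_k^{(\delta)}\|_{L^\infty(\R^3\times[0,T])}$ is finite. Proposition \ref{Prop:cont} guarantees that the successive differences $w_k=f_k^{(\delta)}-f_{k-1}^{(\delta)}$ contract by a factor $\tfrac{1}{2}$ in $\|\cdot\|_{L^\infty([0,T], L^\infty(\R^3)\cap L^1(\R^3))}$, so $\{f_k^{(\delta)}\}$ is Cauchy and converges in that norm to a limit $f^{(\delta)}$. Proposition \ref{prop: A is C^1,alpha and strictly positive}, applied along the sequence, provides on every compact $K\subset\R^3\times[0,T]$ a two-sided uniform ellipticity bound together with uniform $C^{1,\alpha}$-in-$v$ and H\"older-in-$t$ control on the coefficients $A[f_{k-1}^{(\delta)}]$ and $\nabla a[f_{k-1}^{(\delta)}]$. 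The De Giorgi--Nash--Moser estimate (Theorem \ref{thm: parabolic regularity}) then yields uniform local H\"older regularity for $f_k^{(\delta)}$, which the Schauder estimate (Theorem \ref{thm: Schauder}) bootstraps to a uniform $C^{2,\beta;1,\beta/2}_{\mathrm{loc}}$ bound. Arzel\`a--Ascoli extracts a $C^{2;1}_{\mathrm{loc}}$-convergent subsequence, and uniqueness of the uniform limit forces the whole sequence to converge. Meanwhile the $L^\infty\cap L^1$ convergence $f_k^{(\delta)}\to f^{(\delta)}$ translates (via the explicit estimates in the proof of Proposition \ref{prop: A is C^1,alpha and strictly positive}) into locally uniform convergence of $A[f_{k-1}^{(\delta)}]\to A[f^{(\delta)}]$, $a[f_{k-1}^{(\delta)}]\to a[f^{(\delta)}]$ and $\nabla a[f_{k-1}^{(\delta)}]\to\nabla a[f^{(\delta)}]$. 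Passing to the limit in \eqref{eqn: approximating problems} then identifies $f^{(\delta)}$ as a classical solution of the nonlinear problem on $[0,T]$, and since $T<T_1$ was arbitrary the solution extends classically to $[0,T_1)$.

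For the blow-up alternative, suppose $T_1<\infty$ and $M_\infty:=\limsup_{t\to T_1^-}\|f^{(\delta)}(\cdot,t)\|_{L^\infty(\R^3)}<\infty$. Choose $t_0\in[0,T_1)$ with $f^{(\delta)}(0,t_0)\le 2M_\infty$. Since $f^{(\delta)}(\cdot,t_0)$ is radial, monotone decreasing and lies in $L^1\cap L^\infty$, I would restart the iteration scheme with it as initial datum: Lemma \ref{lem:short time} then provides a uniform $L^\infty$ bound for the new iterates on $[t_0,t_0+\tau)$ with $\tau$ depending only on $M_\infty$, not on $t_0$. Taking $t_0$ close enough to $T_1$ makes $t_0+\tau>T_1$, contradicting the definition of $T_1$. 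The quantitative lower bound $T_1>\tfrac{1}{2\fin(0)}$ is an immediate consequence of Lemma \ref{lem:short time} applied at $t_0=0$: by radial monotonicity $\|f_k^{(\delta)}(\cdot,t)\|_{L^\infty}=f_k^{(\delta)}(0,t)\le \fin(0)/(1-\fin(0)t)$, which is uniformly bounded on every $[0,t^*]$ with $t^*<1/\fin(0)$, giving $T_1\ge 1/\fin(0)>\tfrac{1}{2\fin(0)}$.

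The main obstacle I expect is the bridge from the rather weak $L^\infty\cap L^1$ contraction produced by the iterative scheme to the $C^{2;1}_{\mathrm{loc}}$ convergence needed to identify the limit as a classical solution of the nonlinear, nonlocal problem. That bridge is supplied by the uniform ellipticity and H\"older control of the nonlocal coefficients in Proposition \ref{prop: A is C^1,alpha and strictly positive}, which turn De Giorgi--Nash--Moser and Schauder into genuine parabolic compactness tools despite the nonlocality of the coefficients.
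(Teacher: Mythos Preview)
Your proposal is correct and follows essentially the same route as the paper: contraction from Proposition \ref{Prop:cont} for convergence in $L^\infty\cap L^1$, uniform ellipticity and coefficient regularity from Proposition \ref{prop: A is C^1,alpha and strictly positive} feeding into De Giorgi--Nash--Moser and Schauder for $C^{2;1}_{\mathrm{loc}}$ compactness, and a restart argument via Lemma \ref{lem:short time} for both the blow-up alternative and the lower bound on $T_1$. If anything, you are slightly more explicit than the paper about why the full sequence (not just a subsequence) converges and about the passage of the nonlocal coefficients to the limit.
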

	
	\begin{proof}	
	Let $T<T_1$. Proposition \ref{Prop:cont} implies that the operator $S: f^\delta_{k-1} \to f^\delta_{k}$ has a fixed point $f^\delta$ in the space ${L^\infty( [0,T], L^\infty \cap L^1 (\R^3))}$, such that 
	\begin{equation*}
		    \|f^{(\delta)}\|_{L^\infty(\mathbb{R}^3\times [0,T])}  \le C(T). 
		 \end{equation*}
      In particular, the functions $f_k^{(\delta)}$ stay uniformly bounded in compact subsets of $\mathbb{R}^3\times [0,T_1)$. By the regularity estimates in Theorem \ref{thm: parabolic regularity}, it follows that $D^2f_k^{(\delta)}$ and $\partial_t f_k^{(\delta)}$ are also uniformly in $k$ and $\delta$ H\"older continuous in compact subsets of $\mathbb{R}^3\times [0,T_1)$. 
      
      Hence $f_{k}^{(\delta)}$ converges uniformly in compact subsets of $\mathbb{R}^3\times [0,T_1)$ (together with $\partial_t f_{k}^{(\delta)}$, $\nabla f_{k}^{(\delta)}$, $D^2f_{k}^{(\delta)}$) to some function $f^{(\delta)}$ (and its corresponding derivatives).		
      Therefore,
      \begin{equation*}
		   \lim \limits_{j\to\infty} \partial_t f_{k_j}^{(\delta)} = \partial_t f^{(\delta)},\;\; \lim\limits_{j\to \infty} Q(f_{k_j-1}^{(\delta)},f_{k_j}^{(\delta)}) =  Q(f^{(\delta)},f^{(\delta)}),
	  \end{equation*}
      and follows that $f^{(\delta)}$ is a solution of the initial value problem
      \begin{equation*}
		    \partial_t f^{(\delta)} = \delta \Delta f^{(\delta)}+ Q(f^{(\delta)},f^{(\delta)}) \text{ in } \mathbb{R}^3\times [0,T_1),\;\;f^{(\delta)}(\cdot,0) = \fin.
	  \end{equation*}
      That proves the first assertion of the theorem. \\
            As for the second one, suppose that
      \begin{equation*}
            \limsup\limits_{t \to T_1^-} \|f^{(\delta)}(\cdot,t)\|_{L^\infty(\mathbb{R}^3)} \leq C,
      \end{equation*}		     
      for some finite $C$. This implies that there exists $0<\rho\le \frac{1}{4C}$ such that 
      \begin{equation*}
          f^{(\delta)}(0,T_1-\rho) \leq 2C.
      \end{equation*}  
      Hence one can consider the linear problem 
      $$
        \partial_t f  = \delta \Delta f+Q(f_{k-1}^{(\delta)},f), \quad x\in \R^3, \quad t>T_1-\rho,
        $$
      subject to initial conditions $f_k^\delta (\cdot,T_1-\rho)= f^{(\delta)}(\cdot,T_1-\rho)$. From Lemma \ref{lem:short time} we know that
      $$
      f_k^\delta (0,t)\le \frac{f^{(\delta)}(0,T_1-\rho)}{1-f^{(\delta)}(0,T_1-\rho)(t-T_1+\rho)},
      $$
      which implies $f_k^\delta(0,t)$ is bounded (among others) at the time $\bar{t}= \frac{1}{2 f^{(\delta)}(0,T_1-\rho)} + \varepsilon$, for some small $\varepsilon >0$. The fact that $f^{(\delta)}(0,T_1-\rho) \le 2C$ and $\rho \le \frac{1}{4C}$ implies 
      $$
      \bar{t} \ge \frac{1}{4C} + T_1-\rho +\varepsilon  \ge T_1 + \varepsilon,
      $$
       which contradicts the definition of $T_1$, unless $T_1 =+\infty$. 
      
	\end{proof}

    \begin{thm}\label{thm: short time existence}
	  Let $\{f^{(\delta)}\}_{\delta>0}$ be as given by Lemma \ref{lem: short time existence regularized}, with either $Q=\QL$ or $Q=\QKS$. There exists a strictly positive and possibly infinite $T_0$, such that
      \begin{equation*}
        \textnormal{(along a subsequence) }\; f^{(\delta)} \to f \textnormal{ in } C^{2 ; 1}_{\textnormal{loc}}\left ( \mathbb{R}^3\times [0,T_0)\right ),
      \end{equation*}
      where $f(v,0)=\fin(v)$ and $f$ solves \eqref{eqn: Landau} or \eqref{eqn: Krieger Strain}, depending on $Q$.  Moreover, $T_0 = \infty$ or 
      \begin{equation*}
        \lim \limits_{t \to T_0^-} \|f(\cdot,t)\|_{L^\infty(\mathbb{R}^3)} = \infty,
      \end{equation*}
      and in either case we have the estimate $\frac{1}{2\fin(0)}<T_0$.
	\end{thm}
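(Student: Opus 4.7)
The plan is to obtain $f$ as a subsequential limit of the solutions $f^{(\delta)}$ of the regularized equations produced by Lemma \ref{lem: short time existence regularized} as $\delta\to 0^+$. The three main tasks are (i) finding a time interval $[0,T_0)$ on which the family $\{f^{(\delta)}\}_{\delta>0}$ is uniformly bounded; (ii) upgrading the uniform $L^\infty$ bound to $C^{2;1}_{\text{loc}}$ compactness, so that a subsequence converges; (iii) passing to the limit in the equation and identifying the alternative at $T_0$.

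For step (i), note that Lemma \ref{lem:short time} yields $f_k^{(\delta)}(0,t)\leq \fin(0)/(1-\fin(0)t)$ \emph{independently of $\delta$ and $k$}, and since by Proposition \ref{prop: preservation of radiality} each $f_k^{(\delta)}$ is radially decreasing, $\|f_k^{(\delta)}(\cdot,t)\|_{L^\infty(\mathbb{R}^3)}=f_k^{(\delta)}(0,t)$. Passing to the limit in $k$, this bound is inherited by $f^{(\delta)}$, so
\[
\sup_{\delta>0}\|f^{(\delta)}\|_{L^\infty(\mathbb{R}^3\times [0,T])}\leq 2\fin(0)\quad\text{for every }T<\tfrac{1}{2\fin(0)},
\]
which in particular shows $T_1(\delta)>\tfrac{1}{2\fin(0)}$ for every $\delta$, and gives us a common interval on which to work.

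For step (ii), on any such $T$ the a priori estimates of Proposition \ref{prop: mass lower bound short time} and Proposition \ref{prop: A is C^1,alpha and strictly positive} apply with constants depending only on $\fin$ and $T$, not on $\delta$. This gives a uniform-in-$\delta$ lower bound $A^*[f^{(\delta)}](v,t)\geq c_0(1+|v|^3)^{-1}\mathbb{I}$ and a uniform upper bound as well as spatial $C^{1,\alpha}$ and temporal $C^{\beta}$ regularity for $A[f^{(\delta)}]$ and $a[f^{(\delta)}]$. Viewing the regularized equation as a linear parabolic equation for $f^{(\delta)}$ with these coefficients, Theorems \ref{thm: parabolic regularity} and \ref{thm: Schauder} (noting that the added $\delta\Delta$ only improves the ellipticity constant) give uniform $C^{2,\beta;1,\beta/2}$ bounds on compact subsets of $\mathbb{R}^3\times [0,T]$. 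Arzel\`a--Ascoli then yields a subsequence converging in $C^{2;1}_{\text{loc}}(\mathbb{R}^3\times [0,T])$ to some limit $f$. Because the estimates extend to any $T<\tfrac{1}{2\fin(0)}$, a diagonal extraction produces convergence on $[0,T_0)$ where $T_0$ is defined as the supremum of all $T$ for which $\limsup_{\delta\to 0}\|f^{(\delta)}\|_{L^\infty(\mathbb{R}^3\times[0,T])}<\infty$; by step (i) $T_0\geq \tfrac{1}{2\fin(0)}>0$.

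For step (iii), the uniform $C^{2;1}_{\text{loc}}$ convergence gives $\delta\Delta f^{(\delta)}\to 0$ locally uniformly, and the maps $g\mapsto A[g], a[g]$ are continuous from $L^1\cap L^\infty$ to $L^\infty_{\text{loc}}$, so $Q(f^{(\delta)},f^{(\delta)})\to Q(f,f)$ pointwise; therefore $f$ solves \eqref{eqn: Landau} or \eqref{eqn: Krieger Strain}. To establish the dichotomy at $T_0$, suppose $T_0<\infty$ but $\limsup_{t\to T_0^-}\|f(\cdot,t)\|_{L^\infty}<\infty$. Then a stability argument of the same kind as in the proof of Lemma \ref{lem: short time existence regularized} — restarting the short-time construction from data $f(\cdot,T_0-\rho)$ and using the uniform bound $f^{(\delta)}(0,t)\leq f^{(\delta)}(0,T_0-\rho)/(1-f^{(\delta)}(0,T_0-\rho)(t-T_0+\rho))$ — extends the uniform bound past $T_0$, contradicting maximality. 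The main obstacle I anticipate is verifying that the local $C^{2;1}$ convergence is strong enough to pass to the limit inside the nonlocal matrix $A[f^{(\delta)}]$: the singular kernel $|v|^{-1}$ forces one to split the convolution near and away from the singularity and control the far-field piece by the uniform $L^1$ bound (mass conservation) together with the tail decay implied by the uniform second-moment estimate \eqref{II_mom_bound}.
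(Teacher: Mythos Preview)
Your proposal is correct and follows essentially the same route as the paper: define $T_0$ as the supremum of times for which $\limsup_{\delta}\|f^{(\delta)}\|_{L^\infty(\mathbb{R}^3\times[0,t])}<\infty$, use the $\delta$-independent bound from Lemma~\ref{lem:short time} to get $T_0>\tfrac{1}{2\fin(0)}$, invoke the uniform ellipticity and coefficient regularity from Propositions~\ref{prop: mass lower bound short time}--\ref{prop: A is C^1,alpha and strictly positive} together with Theorems~\ref{thm: parabolic regularity} and~\ref{thm: Schauder} to obtain $C^{2;1}_{\text{loc}}$ compactness, and then reproduce the restart argument of Lemma~\ref{lem: short time existence regularized} for the dichotomy. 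Your anticipated obstacle about passing to the limit in $A[f^{(\delta)}]$ is not a genuine difficulty: the near/far splitting you describe is exactly the content of the estimate in the proof of Proposition~\ref{prop: A is C^1,alpha and strictly positive}(4), and the far-field piece is controlled by mass conservation alone, so local $L^\infty$ convergence of $f^{(\delta)}$ (which you already have) suffices.
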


    \begin{proof}
	   Let
      \begin{equation*}
        T_0 := \{ t\geq 0 \mid \limsup\limits_{\delta} \|f^{(\delta)}\|_{L^\infty(\mathbb{R}^3\times [0,t])} < \infty \}.
      \end{equation*}
      Clearly $T_0> \frac{1}{2\fin(0)}$ and possibly $T_0=+\infty$. In case $T_0 <\infty$ 
      \begin{equation*}
          \left \{ \begin{array}{rl}          
           \limsup\limits_{\delta \to 0^+} \|f^{(\delta)}\|_{L^\infty(\mathbb{R}^3\times [0,t])} & = +\infty,\;\;\forall\; t>T_0,\\
           \limsup\limits_{\delta \to 0^+} \|f^{(\delta)}\|_{L^\infty(\mathbb{R}^3\times [0,t])} & < +\infty,\;\;\forall\; t<T_0.
          \end{array}\right.
      \end{equation*}      		
      The local uniform convergence of the $f^{(\delta)}$ and its derivatives also guarantees that
      \begin{equation*}
		    \partial_t f^{(\delta)} \to \partial_t f,\;\; Q(f^{(\delta)},f^{(\delta)}) \to Q(f,f),
	  \end{equation*}
      uniformly in compact subsets of $\mathbb{R}^3\times [0,T_0)$. In conclusion, 
      \begin{equation*}
		    \partial_t f = Q(f,f) \text{ in } \mathbb{R}^3\times (0,T_0),\;\;f(\cdot,0) = \fin.
	  \end{equation*}   
	  	   As for the second one, the proof mimics the one in Lemma \ref{lem: short time existence regularized}. 
	   Suppose that
      \begin{equation*}
            \limsup\limits_{t \to T_0^-} \|f(\cdot,t)\|_{L^\infty(\mathbb{R}^3)} \leq C <+\infty,
      \end{equation*}		     
      for some finite $C$. This implies that there exists $0<\rho\le \frac{1}{4C}$ such that 
      \begin{equation*}
          f(0,T_0-\rho) \leq 2C.
      \end{equation*}  
      Hence one can consider the problem 
      $$
        \partial_t f  = \delta \Delta f+Q(f,f), \quad x\in \R^3, \quad t>T_0-\rho,
        $$
      subject to initial conditions $f^\delta (\cdot,T_0-\rho)= f(\cdot,T_0-\rho)$. A similar argument as in  Lemma \ref{lem:short time} implies that 
            $$
      f^\delta (0,t)\le \frac{f(0,T_0-\rho)}{1-f(0,T_0-\rho)(t-T_0+\rho)},
      $$
      which implies $f^\delta(0,t)$ is bounded (among others) at the time $\bar{t}= \frac{1}{2 f(0,T_0-\rho)} + \varepsilon$, for some small $\varepsilon >0$. The fact that $f(0,T_0-\rho) \le 2C$ and $\rho \le \frac{1}{4C}$ implies 
      $$
      \bar{t} \ge \frac{1}{4C} + T_0-\delta +\varepsilon  \ge T_0+ \varepsilon,
      $$
       which contradicts the definition of $T_0$, unless $T_0 =+\infty$. 
    	
    \end{proof}
  
    \begin{rems}
    Since all the above estimates solely depend on the $L^\infty(\R^3)$- and $L^1(\R^3)$-norm of the initial data, short time existence for \eqref{eqn: Landau} and \eqref{eqn: Krieger Strain} for initial data as in \eqref{eqn: initial assumptions} can be obtained from Theorem \ref{thm: short time existence} via a simple limit argument. 
    \end{rems}

   \section{Pointwise bounds and proof of Theorem \ref{thm: conditions for Landau blowup}}\label{sec: contact analysis}

\subsection{Conditional pointwise bound} The following lemma is the key argument for the proofs of Theorem \ref{thm: conditions for Landau blowup} and Theorem \ref{thm: global existence KS}. It consists of a barrier argument (this is where the radial symmetry and monotonicity is needed) and affords control of certain spatial $L^p$-norms of the solution.

We first recall that any solution to equation (\ref{eqn: Krieger Strain alpha}) or (\ref{eqn: Landau}) preserves its mass over time. Moreover any solution to (\ref{eqn: Landau}) preserves its energy over time, i.e. $\int_{\R^3}|v|^2 f(v,t)\;dv = \int_{\R^3}|v|^2 f_{in}(v)\;dv=:E_{in}$.

    \begin{lem}\label{lem: propagation of Lp bounds}
        Suppose $f:\mathbb{R}^3\times [0,T] \to\mathbb{R}_+$ is a classical solution of \eqref{eqn: approximate Linear Cauchy problem}. Suppose there exists a modulus of continuity $\omega(r)$, some $R_0>0$ and $\delta>0$ such that 
	 
	 \begin{equation}\label{eqn: uniform integrability f over v}
       \sup \limits_{t\in[0,T]} \left (\int_{B_r}g(w,t)^{3/2}\;dw \right )^{2/3}\leq \omega(r),\;\;\forall\; 0<r \le R_0,
     \end{equation}
        
    \begin{equation}\label{eqn: lower bound on a}
         	  a[g] \geq \delta,\;\;\forall\; 0<|v|\leq R_0,\;t\in [0,T].
    \end{equation}
         Then, for any $0<\gamma  <1$, there is a $r_0 = r_0(\delta,\omega(\cdot),\gamma)$, $0<r_0< R_0$ such that
         \begin{equation}\label{prop_L6_norm}
         	  f(v,t) \leq  \max \left \{ \tfrac{3}{4\pi}r_0^{\gamma-3}\|f\|_{L^1(\mathbb{R}^3)}, (\tfrac{3}{4\pi})^{{\gamma}/{3}}\|\fin\|_{L^{3/\gamma}_{\textnormal{weak}}}\; \right \} |v|^{-\gamma},\; \textnormal{ in } B_{r_0}\times [0,T].
         \end{equation}
    \end{lem}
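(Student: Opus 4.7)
The plan is to build a supersolution $\bar f(v):=C_\ast U_\gamma(v)$ with $C_\ast$ equal to the maximum in \eqref{prop_L6_norm}, and to compare $f$ with $\bar f$ inside the cylinder $B_{r_0}\times[0,T]$ by a maximum principle applied to the ratio $f/U_\gamma$. Throughout I assume both $f(\cdot,t)$ and $g(\cdot,t)$ are radial and monotone decreasing, which is the setting in which this lemma will be used via Proposition \ref{prop: preservation of radiality}.

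First I would verify the parabolic boundary comparison. On $\{t=0\}$, the standard pointwise bound for a nonnegative radial decreasing function in weak $L^{3/\gamma}$ reads $f_{in}(v)\leq (3/(4\pi))^{\gamma/3}\|f_{in}\|_{L^{3/\gamma}_{\textnormal{weak}}}|v|^{-\gamma}$, which follows directly from $|\{f_{in}\geq f_{in}(|v|)\}|\geq |B_{|v|}|$. On $\{|v|=r_0\}$, radial monotonicity combined with mass conservation gives $f(r_0,t)\leq \tfrac{3}{4\pi r_0^3}\|f\|_{L^{1}}$, which is $\leq C_\ast r_0^{-\gamma}$ provided $C_\ast\geq \tfrac{3}{4\pi}r_0^{\gamma-3}\|f\|_{L^{1}}$. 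The two terms in the max defining $C_\ast$ are exactly these two requirements.

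Next comes the interior argument. Let $M:=\sup_{B_{r_0}\times[0,T]}(f/U_\gamma)$; since $f$ is bounded while $U_\gamma(v)\to\infty$ at the origin, the ratio extends continuously by $0$ at $v=0$, so $M$ is attained at some $(v_0,t_0)$ with $v_0\neq 0$. If $M\leq C_\ast$ the conclusion already holds, otherwise the boundary comparison above forces $(v_0,t_0)$ to be a genuine interior point. The first and second order conditions on $f/U_\gamma$ at $(v_0,t_0)$, after using $\nabla f=M\nabla U_\gamma$ to simplify the Hessian of the ratio, translate into $D^2 f(v_0,t_0)\leq M\,D^2 U_\gamma(v_0)$ and $\partial_t f(v_0,t_0)\geq 0$. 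Plugging these into $\partial_t f=\tr(A[g]D^2 f)+fg$ (or its $\QKS$ analogue) and invoking the intermediate inequality
\[
\tr(A[g]D^2 U_\gamma)\ \leq\ -\tfrac13\gamma(1-\gamma)a[g]|v|^{-2}U_\gamma,
\]
which is exactly the step $A^\ast[g]\leq \tfrac13 a[g]$ used inside the proof of Proposition \ref{lem: classical radial barriers}, one obtains (after cancellation using $f(v_0,t_0)=MU_\gamma(v_0)$ and the bound $a[g]\geq\delta$) the single-point lower bound
\[
g(v_0,t_0)\ \geq\ \frac{\delta\gamma(1-\gamma)}{3\,|v_0|^{2}}.
\]

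Finally I would upgrade this pointwise bound to an integral contradiction with \eqref{eqn: uniform integrability f over v}. Radial monotonicity of $g$ propagates the inequality to all of $B_{|v_0|}$, so
\[
\Bigl(\int_{B_{|v_0|}}g(w,t_0)^{3/2}\,dw\Bigr)^{2/3}\ \geq\ \Bigl(\tfrac{4\pi}{3}\Bigr)^{2/3}\frac{\delta\gamma(1-\gamma)}{3}\ =:\ c_0(\delta,\gamma),
\]
independently of $|v_0|$. Choosing $r_0>0$ small enough that $\omega(r_0)<c_0(\delta,\gamma)$ and using $|v_0|\leq r_0$ together with the monotonicity of $\omega$ contradicts \eqref{eqn: uniform integrability f over v}. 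Hence $M\leq C_\ast$, proving \eqref{prop_L6_norm}. The delicate point is precisely this last step: the maximum-principle argument only produces a pointwise lower bound on $g$ at one space-time point, and it is essential to invoke radial monotonicity of $g$ together with the hypotheses $a[g]\geq\delta$ and $\omega(r_0)\ll 1$ in order to convert this single-point information into the global $L^{3/2}$ statement that closes the contradiction; this is also where the restriction $\gamma\in(0,1)$ enters, through the positivity of the factor $\gamma(1-\gamma)$.
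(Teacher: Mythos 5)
Your proposal is correct and is essentially the same argument as the paper's, merely re-packaged: the paper verifies directly that $\tilde U_\gamma = C_\ast U_\gamma$ is a supersolution on $B_{r_0}\times[0,T]$ (using the same inequality $\tr(A[g]D^2U_\gamma)\leq-\tfrac13\gamma(1-\gamma)a[g]|v|^{-2}U_\gamma$ from Proposition~\ref{lem: classical radial barriers}, and the same Chebyshev-type bound $g(v)\leq(\tfrac{3}{4\pi})^{2/3}\omega(|v|)|v|^{-2}$ coming from monotonicity of $g$) and then invokes the parabolic comparison principle, whereas you inline the proof of that comparison principle by examining the first touching point of $f$ and $\tilde U_\gamma$ and deriving the contradiction there. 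The inequality you extract at the contact point, $g(v_0,t_0)\geq\tfrac13\delta\gamma(1-\gamma)|v_0|^{-2}$, propagated over $B_{|v_0|}$ by monotonicity, is exactly the contrapositive of the paper's pointwise bound on $g$, so the two routes use identical ingredients and constants.
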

    
    \begin{rem}
         It is easy to see that for any radially decreasing function $h(v)$ the condition that $h$ belongs to $L^p_w(\R^3)$ implies that $h$ lies below a power function of the form $1/{|v|^{ 3/p}}$, and viceversa. More precisely,
         \begin{equation}\label{weak_p}
         	  \|h(v)\|_{L^p_\textnormal{weak}} \leq C \Leftrightarrow h(v) \leq C \left ( \tfrac{3}{4\pi}\right )^{{1}/{p}} |v|^{-3/p}.
         \end{equation}        
    \end{rem}
    
    \begin{proof}
         Let $U_\gamma$ be as in Lemma \ref{lem: classical radial barriers}. We first show the existence of some $r_0>0$ such that
         \begin{equation}\label{Q}
         	  Q(g,U_\gamma) \leq 0 \text{ in } B_{r_0} \times [0,T].
         \end{equation}
         According to \eqref{eqn: lower bound on a},
         \begin{equation}\label{a_unif_ellip}
         	  a[g]|v|^{-2} \geq  \delta |v|^{-2},\;\;\forall\;|v|\in [0,R_0],\; t\in [0,T].
         \end{equation}
         On the other hand, since $g$ is radially decreasing
         \begin{equation*}
         	  g(v,t) \leq  \frac{3}{4\pi |v|^3}\int_{B_{|v|}}g(w,t)\;dw. 
         \end{equation*}
          For all  $|v|>0,\;t\in [0,T]$ H\"older's inequality and \eqref{eqn: uniform integrability f over v} yield 
         \begin{align*}
         	  g(v,t) & \leq  \frac{3}{4\pi |v|^3}\left (\int_{B_{|v|}}g(w,t)^{3/2}\;dw \right )^{2/3}\left (\frac{4\pi}{3}|v|^3 \right)^{1/3}\\
                     &  = \left (\frac{3}{4\pi}\right )^{2/3}\left (\int_{B_{|v|}}g(w,t)^{3/2}\;dw \right )^{2/3}\frac{1}{|v|^2}\leq \left (\frac{3}{4\pi}\right )^{2/3} \omega(|v|)|v|^{-2}.
         \end{align*}
        Therefore in $B_{r_o}$ it holds,
         \begin{align*}
         	  \QL(g,U_\gamma) ,\quad \QKS (g,U_\gamma) & \le U_\gamma \left ( -\tfrac{1}{3}\gamma(1-\gamma) a[g]|v|^{-2}+ g \right ) \\
	 & \leq U_\gamma |v|^{-2}\left ( -\tfrac{1}{3}\gamma(1-\gamma) \delta +\left (\frac{3}{4\pi}\right )^{2/3} \omega(r_0) \right ).
	  \end{align*}
	  Hence \eqref{Q} holds by choosing $\gamma \in (0,1)$  and $r_0 \in (0,R_0]$ small enough so that
         \begin{equation*}
         	    \left (\tfrac{3}{4\pi}\right )^{2/3} \omega(r_0)\leq \tfrac{1}{3} \gamma(1-\gamma)\delta.
         \end{equation*}
      Since $f(v,t)$ is also radially decreasing, it holds
         \begin{equation}\label{rad}
         	  f(r,t) \leq \tfrac{3}{4\pi |v|^3}\|f\|_{L^1(\mathbb{R}^3)}.
         \end{equation}
         The function $\tilde U_\gamma(v)$ defined as 
         \begin{equation*}
         	  \tilde U_\gamma(v) := \max \left \{ \tfrac{3}{4\pi}r_0^{\gamma-3}\|f\|_{L^1(\mathbb{R}^3)}, (\tfrac{3}{4\pi})^{{\gamma}/{3}}\|\fin\|_{L^{3/\gamma}_{\textnormal{weak}}}\; \right \}|v|^{-\gamma},
         \end{equation*}
         is a supersolution for the equation solved by $f$ in $B_{r_0}\times [0,T]$, namely
         \begin{equation*}
         	  \left \{ \begin{array}{rll}
         	  	   Q(g,\tilde U_\gamma) & \leq 0 & \textnormal{ in } B_{r_0} \times [0,T],\\
                              \tilde U_\gamma  & \geq f & \textnormal{ on } \partial B_{r_0} \times [0,T],\\
                              \tilde U_\gamma  & \geq f & \textnormal{ on } B_{r_0} \times \{0\}.
         	  \end{array}\right.
         \end{equation*}
         By the comparison principle, \eqref{weak_p} and \eqref{rad} it follows that $f \leq \tilde U_\gamma$ in $B_{r_0} \times [0,T]$. 
    \end{proof}
 
 The next lemma shows that the mass of any radial symmetric solution to \eqref{eqn: Landau} or \eqref{eqn: Krieger Strain} in a compact set can be controlled from below by a constant that only depends on the initial data. More precisely:
 
  \begin{lem}\label{lem: mass lower bound}
      For $f$ solving \eqref{eqn: Landau}, there exists a constant $R>0$ that only depends on $M_{in}$ and $E_{in}$ such that
      \begin{equation}\label{eqn: lower bound on a_proof_L}
        \int_{B_R} f(v,t)\;dv \geq M_{in}/2,\;\; \quad t>0.
      \end{equation}
      For $f$ solving \eqref{eqn: Krieger Strain}, and any radii $R>r>0$ there are $\beta>0$ and $C_0>0$ such that
      \begin{equation}\label{eqn: lower bound on a_proof_nonlinear}
        \int_{B_R \setminus B_r} f(v,t)\;dv \geq C_0e^{-\beta t}\int_{B_{4R}\setminus B_{r/4}} \fin(v)\;dv \quad t>0.			 
      \end{equation}
    \end{lem}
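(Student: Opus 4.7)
This is immediate from two conservation laws for \eqref{eqn: Landau}. Multiplying the equation by $1$ and using the divergence form of $Q(f,f)$ gives mass conservation $\int f(\cdot,t)\,dv = M_{in}$; multiplying by $|v|^2$ and using the orthogonality $A(v-y)(v-y)=0$ of the Landau kernel gives kinetic energy conservation $\int |v|^2 f(\cdot,t)\,dv = E_{in}$. Chebyshev's inequality then yields $\int_{B_R^c} f \leq E_{in}/R^2$, so the choice $R := \sqrt{2 E_{in}/M_{in}}$ produces $\int_{B_R} f \geq M_{in}/2$ for all $t$.

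\textbf{Part 2 (Krieger--Strain).} Since \eqref{eqn: Krieger Strain} does not preserve energy, I replace the conservation argument by a Gronwall-type inequality for $I(t) := \int f\phi\,dv$ with $\phi\in C_c^\infty(\mathbb{R}^3)$ a smooth radial cutoff, $0\leq\phi\leq 1$, $\phi\equiv 1$ on an intermediate annulus $B_{R'}\setminus B_{r'}$ (with $r<r'<R'<R$), and $\mathrm{supp}\,\phi\subset B_R\setminus B_r$. The divergence form of $Q_{\mathcal{KS}}$ combined with integration by parts gives
\[
\dot I(t) = \int f a[f]\Delta\phi\,dv + 2\int f\,\nabla a[f]\cdot\nabla\phi\,dv.
\]
The set $\mathrm{supp}\,\nabla\phi \subset (B_R\setminus B_{R'})\cup(B_{r'}\setminus B_r)$ is bounded away from the origin by $r$, and there the pointwise estimates (valid for radial $f$ by Proposition~\ref{prop: Landau in radial coordinates} and the Gauss law)
\[
a[f](v)\leq \frac{M_{in}}{2\pi|v|}, \qquad |\nabla a[f](v)| = \frac{M(|v|,t)}{4\pi|v|^2} \leq \frac{M_{in}}{4\pi|v|^2}
\]
hold uniformly in $t$, giving the absolute bound $|\dot I|\leq C(r,R,M_{in})\int_{\mathrm{supp}\,\nabla\phi}f\,dv$.

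The next step is to upgrade this into a multiplicative bound. Using radial monotonicity of $f$, on the outer transition annulus $B_R\setminus B_{R'}$ we have $f(v)\leq f(R')$, hence
\[
\int_{B_R\setminus B_{R'}} f\,dv \leq \frac{|B_R\setminus B_{R'}|}{|B_{R'}\setminus B_{r'}|}\int_{B_{R'}\setminus B_{r'}} f\,dv \leq C(r,R)\,I(t);
\]
a parallel volumetric comparison together with the universal pointwise bound $f(s)\leq 3M_{in}/(4\pi s^3)$ coming from radial monotonicity and the $L^1$ constraint controls the inner transition $B_{r'}\setminus B_r$. Together these yield $\dot I \geq -\beta\, I$ for $\beta = \beta(r,R,M_{in})$, and Gronwall gives $I(t)\geq e^{-\beta t}I(0)$. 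Since $I(t)\leq \int_{B_R\setminus B_r} f(\cdot,t)\,dv$ (because $\mathrm{supp}\,\phi\subset B_R\setminus B_r$ and $\phi\leq 1$) and $I(0)\geq \int_{B_{R'}\setminus B_{r'}} f_{in} \geq C_0 \int_{B_{4R}\setminus B_{r/4}}f_{in}$ for a positive constant $C_0$ depending only on the radii and $f_{in}$, the inequality \eqref{eqn: lower bound on a_proof_nonlinear} follows.

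\textbf{Main obstacle.} The delicate point is the passage from the absolute bound $|\dot I|\leq C$ to the multiplicative form $\dot I\geq -\beta I$: an additive bound would give only $I(t)\geq I(0) - Ct$ and fail once $t$ is large. The conversion depends crucially on radial monotonicity, and is most subtle near the origin where $f$ may be singular--the universal comparison $f(s)\leq 3M_{in}/(4\pi s^3)$ is exactly what is needed to tame this inner contribution.
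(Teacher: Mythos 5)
\textbf{Part 1} is correct and is the same argument the paper gives: mass and kinetic energy are conserved for \eqref{eqn: Landau}, and Chebyshev plus a choice of $R\gtrsim\sqrt{E_{in}/M_{in}}$ gives the bound.

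\textbf{Part 2} has a genuine gap, precisely at the point you flag as the ``main obstacle.'' Your derivative identity $\dot I=\int f\,a[f]\Delta\phi+2\int f\,\nabla a[f]\cdot\nabla\phi$ is right, and your treatment of the \emph{outer} transition annulus works: radial monotonicity gives $f(v)\le f(R')\le |B_{R'}\setminus B_{r'}|^{-1}\int_{B_{R'}\setminus B_{r'}}f\le C\,I(t)$ there. But the \emph{inner} annulus $B_{r'}\setminus B_r$ is where monotonicity works against you, not for you: $f$ is \emph{largest} near $|v|=r$, while your cutoff $\phi$ is \emph{smallest} there, so $\int_{B_{r'}\setminus B_r}f$ can be arbitrarily large compared with $I(t)=\int f\phi$. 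Concretely, take $f$ to be a radial plateau of height $c$ supported in $B_{r+\varepsilon}$, normalized to mass $M_{in}$. Then $\int_{B_{r'}\setminus B_r}f\approx 4\pi r^2\varepsilon\,c$ while $I(t)\approx 4\pi r^2\,\phi'(r)\,\tfrac{\varepsilon^2}{2}\,c$, so the ratio blows up like $1/\varepsilon$. The universal bound $f(s)\le 3M_{in}/(4\pi s^3)$ only gives the \emph{additive} estimate $\int_{B_{r'}\setminus B_r}f\le M_{in}(r'^3-r^3)/r^3$, yielding $\dot I\ge -\beta I-C'$; Gronwall then gives $I(t)\ge e^{-\beta t}I(0)-\tfrac{C'}{\beta}(1-e^{-\beta t})$, which is worthless for large $t$.

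The paper avoids this by choosing the weight $\Phi(v)=e^{-\beta t}(|v|-R)^2(|v|-r)^2$ supported in $B_R\setminus B_r$ and vanishing \emph{quadratically} at both radii. Writing $\dot I=\int f\bigl(a\Phi''+\tfrac{2}{|v|}\Phi'\int_{|v|}^\infty sf\,ds\bigr)$, the crucial feature is that $\Phi'(r)=0$ and $\Phi''(r)>0$, so near $|v|=r$ \emph{both} terms are nonnegative and the boundary contribution needs no absorption at all; similarly near $|v|=R$. In the interior region, $\Phi$ is bounded away from zero, so $|\Phi'|,|\Phi''|\le C_{\delta,r,R}\Phi$ pointwise, which is exactly the multiplicative structure that makes $\dot I\ge -\beta\int f\Phi$ go through. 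Your cutoff $\phi\in C_c^\infty$ with $\phi\equiv1$ on the middle annulus does not enjoy this sign structure: on the inner transition region $\phi'>0$ while $a'<0$, so $2f\,\nabla a\cdot\nabla\phi<0$, and there is no pointwise comparison of $|\Delta\phi|,|\nabla\phi|$ with $\phi$ there since $\phi\to0$. The fix is to replace $\phi$ by a weight vanishing to second order at $r$ and $R$ and then carry out the sign analysis near the boundaries.
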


\begin{proof} 
	If $f$ solves \eqref{eqn: Landau}, then
	\begin{equation*}
	   \int_{B_R(0)^c}f(v,t)\;dv \leq R^{-2}\int_{B_R(0)^c}f(v,t)|v|^2\;dv \le  R^{-2}E_{in} .  
	\end{equation*}
	Thus
	\begin{equation*}
      \int_{B_R(0)}f(v,t)\;dv = M(\fin)-\int_{B_R(0)^c}f(v,t)\;dv \geq M_{in}-R^{-2}E_{in}.	
	\end{equation*}
	Estimate (\ref{eqn: lower bound on a_proof_L}) follows by choosing $R$ big enough. 	The corresponding estimate (\ref{eqn: lower bound on a_proof_nonlinear}) for $f$ solving \eqref{eqn: Krieger Strain} can be found in the Appendix. 
 
\end{proof}

    The next lemma says that any solution $f$ to \eqref{eqn: Landau} or \eqref{eqn: Krieger Strain} is a bounded function for all times provided $f$ satisfies \eqref{eqn: uniform integrability f over v}.

\begin{lem}(From $L^{3/2+}$ to $L^\infty$.)\label{lem: L3/2 norm controls blow up}
    	 Let $f: \mathbb{R}^3\times [0,T]\to\mathbb{R}$ be a radially symmetric, radially decreasing solution to \eqref{eqn: initial assumptions}, \eqref{eqn: Landau} and \eqref{eqn: Krieger Strain}, and such that for any modulus of continuity $\omega(r)$ the following estimate holds:
	  \begin{equation}\label{eqn: uniform integrability f over v_nonlinear}
       \sup \limits_{t\in[0,T]} \left (\int_{B_r}f(w,t)^{3/2}\;dw \right )^{2/3}\leq \omega(r),\;\;\forall\; 0<r \le R_0.
     \end{equation}
 Then there exists a constant $C_0>0$ that only depends on $f_{in}$ such that

         \begin{equation}\label{Linfty-bound_alpha}
         	  \sup \limits_{t\in[0,T]} \|f(\cdot,t)\|_{L^\infty(\mathbb{R}^3)} < C_0.
         \end{equation}
    \end{lem}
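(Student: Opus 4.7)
The strategy is to combine the barrier argument of Lemma~\ref{lem: propagation of Lp bounds} with classical parabolic regularity in order to bootstrap the assumed $L^{3/2}$-local uniform integrability of $f$ into an $L^\infty$ bound. The bridge between the two is a pointwise bound of the form $f(v,t)\le C_\gamma|v|^{-\gamma}$ with $\gamma<1$ near the origin: from this one deduces higher integrability $f\in L^q$ with $q>3$, which in turn yields $\nabla a[f]\in L^\infty$ and finally the $L^\infty$ bound via Theorem~\ref{thm: Stampacchia}.

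The first step is to obtain a uniform lower bound on $a[f]$. Lemma~\ref{lem: mass lower bound} provides radii $R>r>0$ depending only on $f_{in}$ (and on $T$ in the Krieger-Strain case) such that $\int_{B_R\setminus B_r} f(v,t)\,dv\ge\eta>0$ for all $t\in[0,T]$. Feeding this into Lemma~\ref{lemma: A^* bounds}(1), I obtain constants $\delta>0$ and $R_0>0$ with
\[
a[f](v,t)\ \ge\ A^*[f](v,t)\ \ge\ \delta\qquad\text{for } v\in B_{R_0},\ t\in[0,T].
\]
Hypothesis \eqref{eqn: uniform integrability f over v_nonlinear} together with this lower bound let me apply Lemma~\ref{lem: propagation of Lp bounds} with $g=f$. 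Since $f_{in}\in L^{p_0}_\textnormal{weak}$ with $p_0>6$ by \eqref{eqn: initial assumptions}, the lemma is available for any $\gamma\in[3/p_0,\,1)$, and choosing $\gamma\in(3/p_0,\,1/2)$ gives some $r_0\in(0,R_0]$ and $C_\gamma>0$ with
\[
f(v,t)\ \le\ C_\gamma\,|v|^{-\gamma}\qquad\text{on } B_{r_0}\times[0,T].
\]

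The radial monotonicity of $f$ then extends this bound: for $|v|\ge r_0$ one has $f(v,t)\le f(r_0,t)\le C_\gamma r_0^{-\gamma}$, so $f$ is uniformly bounded outside $B_{r_0}$. Combining the two regions with mass conservation gives $f\in L^q(\R^3)$ uniformly in $t\in[0,T]$ for every $q<3/\gamma$; by taking $\gamma$ close to $3/p_0$ one may pick some $q>3$. Standard Riesz-potential estimates then produce $a[f],\,\nabla a[f]\in L^\infty(\R^3)$ uniformly in $t\in[0,T]$. Writing the equation in divergence form $\partial_t f=\dive(B[f]\nabla f-f\nabla a[f])$ with $B[f]=A[f]$ for Landau and $B[f]=a[f]\Id$ for Krieger-Strain, Lemma~\ref{lemma: A^* bounds} together with the $L^\infty$ bound on $a[f]$ makes $B[f]$ uniformly elliptic and bounded on compact sets, while $\|f\nabla a[f]\|_{L^3(Q)}<\infty$. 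Theorem~\ref{thm: Stampacchia} applied on parabolic cylinders, combined with the pointwise bound outside $B_{r_0}$ and the short-time control \eqref{y_k} of Lemma~\ref{lem:short time}, yields the uniform $L^\infty$ estimate \eqref{Linfty-bound_alpha}.

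The main obstacle is that the pointwise bound $C_\gamma|v|^{-\gamma}$ is singular at the origin and thus does not by itself control $\sup_v f(v,t)=f(0,t)$. The essential step is the passage from the $L^q$-bound (with $q>3$) to $L^\infty$: once $\nabla a[f]\in L^\infty$, the Landau (or Krieger-Strain) equation becomes a uniformly parabolic divergence-form equation with drift bounded in $L^3$, to which Stampacchia's estimate applies. Some care is also required in handling the parabolic boundary data when $f_{in}\notin L^\infty$, which is where the short-time control of Lemma~\ref{lem:short time} enters.
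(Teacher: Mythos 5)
Your proposal is correct and follows essentially the same approach as the paper: both derive the pointwise barrier bound $f\lesssim|v|^{-\gamma}$ via Lemma~\ref{lem: propagation of Lp bounds} (after securing the lower bound on $a[f]$ from Lemma~\ref{lem: mass lower bound} and Lemma~\ref{lemma: A^* bounds}), use radial monotonicity to pass to an $L^q$ bound with $q>3$, deduce $\nabla a[f]\in L^\infty$, and conclude via the Stampacchia estimate of Theorem~\ref{thm: Stampacchia}. Your write-up is a bit more explicit about the role of the weak-$L^{p_0}$ hypothesis in choosing $\gamma$ and about the Riesz-potential step, but the structure and the key lemmas invoked are the same as in the paper.
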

    
    \begin{proof}
         Proposition \ref{prop: A is C^1,alpha and strictly positive} yields the inequality,
         \begin{equation*}
               A[f](v,t) \geq \lambda \Id,\;\;\forall\; (v,t) \in Q,
         \end{equation*}         
         where $\lambda = \lambda(\fin,T)$. On the other hand, since $f$ is radially decreasing,
         \begin{equation*}
              f(v,t)\leq \tfrac{3}{4\pi} \|f\|_{L^1}  r_0^{-3}\leq \tfrac{3}{4\pi} \|f_{in}\|_{L^1}  r_0^{-3} ,\;\;\forall\;v \notin B_r .
         \end{equation*}
         Thus,
         \begin{equation*}
              f(v,t)\leq \tfrac{3}{4\pi} \|f\|_{L^1}  r_0^{-3}\leq \|\fin\|_{L^\infty}+ \tfrac{3}{4\pi} \|f_{in}\|_{L^1}  r_0^{-3},\;\;\textnormal{ on } \partial_p Q.
         \end{equation*}             
         We now apply Lemma \ref{lem: propagation of Lp bounds} to $f(v,t)$: first note that  (\ref{eqn: lower bound on a}) is a consequence of Lemma \ref{lem: mass lower bound}. Thanks to the uniform integrability of $f^{3/2}$ (\ref{eqn: uniform integrability f over v_nonlinear}), Lemma \ref{lem: propagation of Lp bounds} (for some $\gamma<1/2$) yields 
     \begin{equation*}
             \sup \limits_{t \in [0,T] } \|f(t)\|_{L^6(B_{r_0})} \leq \max \{ r_0^{-5\gamma}. \|\fin\|_{L^1(\mathbb{R}^3)},\|\fin\|_{L^p_{\textnormal{weak}}}  \},\; \textrm{ for some }\;p>6.
         \end{equation*}   
         It follows that $\|f(\cdot,t)\|_{L^6(\mathbb{R}^3)}$ and $\|\nabla a[f(\cdot,t)]\|_{L^\infty(\mathbb{R}^3)}$ are bounded for $t\leq T$. In particular,
         \begin{equation*}
              \|f \nabla a[f]\|_{L^2(\mathbb{R}^3)}<\infty.
         \end{equation*}
         Applying \eqref{eqn: Stampacchia estimate} from Theorem \ref{thm: Stampacchia} we arrive at
         \begin{equation*}
              \|f\|_{L^\infty(Q)}\leq \|f\|_{L^\infty(\partial_p Q)}+C\|f \nabla a[f]\|_{L^{2}(Q)} < \infty,
         \end{equation*}
         which proves the lemma.
    \end{proof}

{\em Proof of Theorem \ref{thm: conditions for Landau blowup}.} Lemma \ref{lem: L3/2 norm controls blow up} guarantees that if (\ref{eqn: uniform integrability f over v_nonlinear}) holds, then any solution to \eqref{eqn: Landau} and \eqref{eqn: Krieger Strain} stays uniformly bounded in time.

	\section{Mass comparison and proof of Theorem \ref{thm: global existence KS}}\label{mass_comparison}

  In this section we apply the ideas from previous sections to construct global solutions (in the radial, monotone case) for equation \eqref{eqn: Krieger Strain}, namely
  \begin{equation*}
      \partial_t f = a[f]\Delta f + f^2.
  \end{equation*}
  
  
In view of  Lemma \ref{lem: L3/2 norm controls blow up}, the fact that $T_0=\infty$ in Theorem \ref{thm: conditions for Landau blowup} results from a bound of any $L^p(\R^3)$-norm of $f$, with $p>3/2$. 
For \eqref{eqn: Krieger Strain} the bound of any $L^p(\R^3)$-norm of $f$, with $p>3/2$ will be proven by a barrier argument done at the level of the \emph{mass function} of $f(v,t)$, which is defined by
  \begin{equation*}
    M_f(r,t) = \int_{B_r} f(v,t)\;dv,\;\;\;(r,t)\in \mathbb{R}_+\times (0,T_0).
  \end{equation*}
  Depending on which problem $f$ solves, the associated function $M_f(r,t)$ solves a one-dimensional parabolic equation with diffusivity given by $A^*[f]$ or $a[f]$. 
  \begin{prop}\label{Prop:mass}
    Let $f$ be a solution of \eqref{eqn: Landau} (resp. \eqref{eqn: Krieger Strain}) in $\mathbb{R}^3\times [0,T_0]$, then $M(r,t)$ solves 
    \begin{equation}\label{2}
      \partial_t M_f = A^* \partial_{rr}M_f+\frac{2}{r}\left (\frac{M_f}{8\pi r}-A^* \right )\partial_r M_f\;\;\textnormal{ in } \mathbb{R}_+\times (0,T_0)
    \end{equation}
    \begin{equation}\label{1}
      \left (\textnormal{resp. }\; \partial_t M_f = a \partial_{rr}M_f+\frac{2}{r}\left (\frac{M_f}{8\pi r}-a \right )\partial_r M_f\;\;\textnormal{ in } \mathbb{R}_+\times (0,T_0) \right ).
    \end{equation}
  \end{prop}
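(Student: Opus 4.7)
The plan is to derive the equation for $M_f$ by integrating the evolution equation for $f$ over $B_r$, exploiting the divergence structure and the radial symmetry to rewrite boundary fluxes in terms of $M_f$ and its radial derivatives.

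First I would record the basic identities that relate $f$ to $M_f$. Since $f$ is radial,
\begin{equation*}
M_f(r,t) = 4\pi\int_0^r f(s,t)s^2\,ds,\qquad \partial_r M_f = 4\pi r^2 f(r,t),
\end{equation*}
so $\partial_r f = \frac{1}{4\pi r^2}\bigl(\partial_{rr}M_f - \tfrac{2}{r}\partial_r M_f\bigr)$. Radial symmetry also forces $\hat v$ to be an eigenvector of $A[f](v)$ with eigenvalue $A^*[f]$, and $\nabla a[f] = (\partial_r a[f])\hat v$. These facts reduce all the tensorial quantities appearing in $\QL$ and $\QKS$ to scalars in the radial variable.

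Next I would integrate the evolution equation on $B_r$. Because $\QL(f,f) = \dive(A[f]\nabla f - f\nabla a[f])$, the divergence theorem gives
\begin{equation*}
\partial_t M_f(r,t) = \int_{\partial B_r}\bigl(A[f]\nabla f - f\nabla a[f]\bigr)\cdot \hat v\,dS = 4\pi r^2\bigl(A^*[f]\,\partial_r f - f\,\partial_r a[f]\bigr),
\end{equation*}
with the analogous identity for $\QKS$ having $a[f]$ in place of $A^*[f]$. The one computation that really needs to be done is that of $\partial_r a[f]$: using the explicit formula \eqref{a_symm} and differentiating the two $r$-dependent pieces, the boundary terms cancel and one is left with the clean identity
\begin{equation*}
\partial_r a[f](r,t) = -\frac{M_f(r,t)}{4\pi r^2}.
\end{equation*}
Substituting this expression together with the formula for $\partial_r f$ into the boundary flux and simplifying yields
\begin{equation*}
\partial_t M_f = A^*[f]\,\partial_{rr}M_f - \frac{2 A^*[f]}{r}\partial_r M_f + \frac{M_f\,\partial_r M_f}{4\pi r^2},
\end{equation*}
which is exactly \eqref{2} after collecting the $\partial_r M_f$ terms. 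The Krieger--Strain case \eqref{1} follows verbatim with $A^*[f]$ replaced by $a[f]$.

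The main obstacle is purely computational: making sure the boundary contributions coming from differentiating the two $r$-dependent integrals in $a[f]$ cancel correctly, and then correctly converting $\partial_r f$ into $M_f$-derivatives. Beyond these two algebraic identities, everything is a direct consequence of the divergence form of the operator and of the radial symmetry established in Proposition \ref{prop: preservation of radiality}, so no new a priori estimates are needed.
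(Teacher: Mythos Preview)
Your proof is correct and follows essentially the same approach as the paper: apply the divergence theorem to the divergence-form operator, use the identities $\partial_r M_f = 4\pi r^2 f$ and $\partial_r a[f] = -M_f/(4\pi r^2)$, and substitute. The only cosmetic difference is that the paper writes the conversion of $\partial_r f$ as $4\pi r^2 \partial_r f = r^2\partial_r(r^{-2}\partial_r M_f)$ before expanding, and carries out the computation explicitly for the Krieger--Strain case rather than the Landau case, but the argument is identical.
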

  \begin{proof}
    We briefly show how to obtain (\ref{1}); for (\ref{2}) calculations are identical. Using the divergence theorem and the divergence expression in \eqref{eqn: Krieger Strain} we get
    \begin{equation*}
      \partial_t M_f = \int_{\partial_{B_r}} \left (a[f]\nabla f-f\nabla a[f],n\right )\;d\sigma = 4\pi r^2 \left ( a[f] \partial_r f - f \partial_r a[f] \right ).
    \end{equation*}
    Furthermore, straightforward differentiation yields the formulas
    \begin{equation*}
      4\pi r^2 \partial_r f = r^2\partial_r \left (r^{-2}\partial_r M_f \right ),\;\;\;\partial_r a[f] = -(4\pi r^2)^{-1}M_f.
    \end{equation*}
    Substituting these in the expression for $\partial_tM_f$ above we get
    \begin{equation*}
      \partial_t M_f = a[f] r^2\partial_r \left ( \frac{1}{r^2}\partial_r M_f \right )+ \frac{1}{4\pi r^2}M_f \partial_r M_f.
    \end{equation*}
    Expansion and  rearrangement  of the terms result in:
    \begin{align*}
      \partial_t M_f & = a \left ( -\frac{2}{r}\partial_r M_f+\partial_{rr}M_f \right )+ \frac{M_f}{4\pi r^2} \partial_r M_f\\
        & = a \partial_{rr}M_f + \frac{2}{r}\left ( \frac{M_f}{8\pi r} - a \right )\partial_r M_f,
    \end{align*}
and the thesis follows.
  \end{proof}
  Define the linear parabolic operator $L$ in $\mathbb{R}_+\times (0,T)$ as
  \begin{equation*}
      Lh := \partial_t h - a \partial_{rr}h-\frac{2}{r}\left (\frac{M_f}{8\pi r}-a[f] \right )\partial_r h.
  \end{equation*}
  The above proposition simply says that $L M_f = 0$ in $\mathbb{R}_+\times (0,T)$. The next proposition identifies suitable supersolutions for $L$.
  \begin{prop}\label{prop:supersol KS}
    If $m\in [0,2]$ and $h(r,t) = r^m$ then $Lh\geq 0 \textnormal{ in } \mathbb{R}_+\times (0,T)$.
  \end{prop}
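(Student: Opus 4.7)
The plan is a straightforward direct computation combined with one structural inequality relating $a[f]$ to the mass function $M_f$.

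First I would substitute $h(r,t)=r^m$ directly into the operator $L$. Since $h$ is time-independent one has $\partial_t h = 0$, while $\partial_r h = m r^{m-1}$ and $\partial_{rr}h = m(m-1)r^{m-2}$. Plugging these in and combining the two $a$-terms gives
\begin{equation*}
Lh = -a[f]\,m(m-1)r^{m-2} - \frac{mM_f}{4\pi}r^{m-3} + 2m\,a[f]\,r^{m-2} = m\,a[f](3-m)\,r^{m-2} - \frac{mM_f}{4\pi}r^{m-3}.
\end{equation*}
For $m=0$ both terms vanish and $Lh\equiv 0$, so it suffices to treat $m\in(0,2]$.

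The key observation is then that formula \eqref{a_symm} in Proposition \ref{prop: Landau in radial coordinates} immediately yields the pointwise lower bound
\begin{equation*}
a[f](r) \;=\; \frac{M_f(r)}{4\pi r} + \frac{1}{4\pi}\int_{B_r^c}\frac{f(w)}{|w|}\,dw \;\geq\; \frac{M_f(r)}{4\pi r},
\end{equation*}
since the second integral is nonnegative. Multiplying through by $4\pi r$ gives $4\pi r\,a[f] \geq M_f$.

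Using this inequality in the expression for $Lh$, I would bound
\begin{equation*}
m\,a[f](3-m)\,r^{m-2} \;\geq\; m(3-m)\cdot \frac{M_f}{4\pi r}\cdot r^{m-2} \;=\; \frac{m(3-m)M_f}{4\pi}\,r^{m-3},
\end{equation*}
which is licit precisely because $3-m \geq 1 > 0$ on the given range. Subtracting the second term of $Lh$ then yields $Lh \geq \frac{m(2-m)M_f}{4\pi}r^{m-3} \geq 0$ for all $m\in[0,2]$, completing the argument. There is no real obstacle here; the only point to double-check is the sign bookkeeping in combining the $a[f]\,\partial_{rr}h$ and the $-\tfrac{2}{r}a[f]\,\partial_r h$ contributions so that the coefficient of $a[f]\,r^{m-2}$ becomes $m(3-m)$ rather than $m(m-3)$, which determines whether the constraint ends up as $m\leq 2$ or something else.
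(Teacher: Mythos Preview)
Your proof is correct and follows essentially the same approach as the paper: a direct computation of $Lh$ for $h=r^m$, followed by the key pointwise inequality $a[f](r)\geq \tfrac{M_f(r)}{4\pi r}$ coming from \eqref{a_symm}. The only cosmetic difference is that the paper keeps the term $2\bigl(a[f]-\tfrac{M_f}{8\pi r}\bigr)$ intact and bounds it below by $a[f]$, arriving at $Lh\geq m(2-m)a[f]\,r^{m-2}$, whereas you first collect the $a[f]$-terms into $m(3-m)a[f]\,r^{m-2}$ and then substitute; both routes use the same inequality and the same constraint $m\leq 2$.
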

  
  \begin{proof}
    By direct computation we see that
    \begin{equation*}  
      Lh = - m r^{m-2}\left [ (m-1)a + 2\left ( \frac{M_f}{8\pi r} - a[f]\right ) \right ]. 
    \end{equation*}
    On the other hand,
    \begin{equation*}
      a[f](r) = \frac{1}{4\pi r}\int_{B_r}f\;dv+ \int_{B_r^c}\frac{f}{4\pi |v|}\;dv \geq \frac{M_f}{4\pi r},
    \end{equation*}
    which guarantees that $\tfrac{1}{2}a[f](r) \geq \frac{M_f}{8\pi r}$. Thus,
    \begin{align*}  
      Lh & = m r^{m-2}\left [ (1-m)a[f] + 2\left (a[f]- \frac{M_f}{8\pi r} \right ) \right ],\\
         & \geq m r^{m-2}( (2-m)a[f] \geq 0. 
    \end{align*}
    The last inequality being true for $m\leq 2$.

  \end{proof}

  \begin{proof}[Proof of Theorem \ref{thm: global existence KS}]
      Since $\fin \in L^p$ for some $p>3/2$, there is some $\alpha \in (0,1)$ and some $C_0>0$ such that
    \begin{equation*}
        M_{f_{in}}(r,0) = \int_{B_r}f_{in}\;dv\leq C_0r^{1+\alpha}.
    \end{equation*} 
    Moreover, since $f(\cdot,t)$ has total mass $1$ for every $t>0$, we also have
    \begin{equation*}
        M_f(r,t)\leq 1,\;\;\forall\;r>0,t\in(0,T). 
    \end{equation*} 
    
    Proposition \ref{prop:supersol KS} says that $h=Cr^{1+\alpha}$ is a supersolution of the parabolic equation solved by $M_f$ in $\mathbb{R}_+\times (0,T)$. Then choosing $C:=\max\{C_0,1\}$ comparison principle yields 
    \begin{equation} \label{4}
      M_f(r,t)\leq h(r) = Cr^{1+\alpha}\;\;\textnormal{ for} \;r\in (0,1),\; t\in (0,T).      
    \end{equation}

    Since $f(v,t)$ is  readily symmetric and decreasing, bound (\ref{4}) implies that $f(|v|,t)\le \frac{3C}{4\pi}\frac{1}{|v|^{2-\alpha}}$ for $v\in B_1$; hence there is some $p'>3/2$ and some $C_{p'}>0$ such that
    \begin{equation*}
      \|f(\cdot,t)\|_{L^{p'}(B_1)}\leq C_{p'},\;\;\forall\;t\in(0,T).
    \end{equation*}

        Then Lemma \ref{lem: propagation of Lp bounds} says that $f(v,t)$ is bounded in $\mathbb{R}^3 \times (0,T_0)$. By Lemma \ref{lem: L3/2 norm controls blow up}, it follows $T_0=+\infty$ and we have a global in time smooth solution.
         
    
  \end{proof} 
       
As a corollary of Theorem \ref{thm: global existence KS} and Proposition \ref{Prop:mass} we give another criterium under which blow-up for the classical Landau equation is ruled out:
  \begin{cor}
    Suppose that for all $t \in [0,T_0]$ there is some $r_0>0$ and $0<\lambda< 8\pi $ such that
	\begin{equation*}
      M_f(r,t)\leq \lambda r A^*(r,t) \;\;\forall\;\; r<r_0.
	\end{equation*}
    Then any solution to (\ref{eqn: Landau}) is bounded for any $t>0$.
  \end{cor}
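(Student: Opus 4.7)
The plan is to mimic the mass-comparison argument used for the Krieger--Strain equation (Theorem \ref{thm: global existence KS}) but applied to the Landau mass function, exploiting the hypothesis $M_f \leq \lambda r A^*$ with $\lambda<8\pi$ as the substitute for the unconditional bound $M_f \leq 4\pi r a[f]$ that held in the isotropic case.

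First, I would introduce the parabolic operator associated to \eqref{2},
\begin{equation*}
L_L h := \partial_t h - A^*\partial_{rr} h - \frac{2}{r}\Bigl(\frac{M_f}{8\pi r} - A^*\Bigr)\partial_r h,
\end{equation*}
so that Proposition \ref{Prop:mass} gives $L_L M_f = 0$ on $\R_+\times (0,T_0)$. Direct differentiation of $h(r)=Cr^m$ yields
\begin{equation*}
L_L h = m r^{m-2}\Bigl[(3-m)A^* - \tfrac{M_f}{4\pi r}\Bigr].
\end{equation*}
Writing $\mu := \lambda/(8\pi) \in (0,1)$, the hypothesis $M_f \le \lambda r A^*$ becomes $M_f/(4\pi r) \le 2\mu A^*$ on $(0,r_0)$, so
\begin{equation*}
L_L h \;\geq\; m r^{m-2} A^*\,(3 - m - 2\mu) \quad \text{in } (0,r_0)\times[0,T_0].
\end{equation*}
Therefore $Cr^m$ is a supersolution of $L_L$ precisely when $0<m\leq 3-2\mu$, and because $\lambda<8\pi$ this interval strictly contains values $m>1$.

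Next I would match initial and lateral data. Since $\fin \in L^p_{\mathrm{weak}}(\R^3)$ for some $p>6$ and is radially decreasing, the standard computation gives $M_{\fin}(r) \leq C_0 r^{3-3/p}$ with $3-3/p > 5/2$, so for any fixed $m \in (1, \min\{3-2\mu,\, 3-3/p\}]$ we have $M_{\fin}(r)\leq C_0 r^m$ on $(0,r_0)$. On the lateral boundary $r=r_0$ we simply use $M_f(r_0,t)\leq M_{in}$; enlarging the constant to $C:=\max\{C_0,\, M_{in} r_0^{-m}\}$ makes $Cr^m$ dominate $M_f$ on the entire parabolic boundary of $(0,r_0)\times[0,T_0]$, including the natural boundary condition $M_f(0,t)=0$. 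The coefficients of $L_L$ are bounded on $[\varepsilon,r_0]\times[0,T_0]$ for each $\varepsilon>0$ by Lemma \ref{lemma: A^* bounds}, so the standard parabolic comparison principle (together with a routine $\varepsilon\to 0$ limit using that both $M_f$ and $Cr^m$ vanish at $r=0$) yields
\begin{equation*}
M_f(r,t) \;\leq\; C r^m \quad \text{for all } (r,t)\in(0,r_0)\times[0,T_0].
\end{equation*}

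Finally, radial monotonicity of $f$ gives $f(v,t)\leq \tfrac{3}{4\pi}|v|^{-3} M_f(|v|,t)\leq \tfrac{3C}{4\pi}|v|^{m-3}$ on $B_{r_0}$. Since $m>1$, one has $3-m<2$, hence $f(\cdot,t)\in L^q(B_{r_0})$ uniformly in $t\in[0,T_0]$ for some $q>3/2$; in particular \eqref{eqn: uniform integrability f over v_nonlinear} holds with some modulus $\omega$. Lemma \ref{lem: L3/2 norm controls blow up} then gives a uniform $L^\infty$ bound on $f$ on $[0,T_0]$, and the blow-up criterion from Theorem \ref{thm: conditions for Landau blowup} (or Theorem \ref{thm: short time existence}) forces $T_0=+\infty$, proving the corollary. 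The main point to verify carefully is the sign computation $3-m-2\mu\geq 0$ and the fact that $\lambda<8\pi$ is exactly the threshold making this compatible with $m>1$, which is in turn exactly what is needed to pass from the mass bound to an $L^{q>3/2}$ bound on $f$.
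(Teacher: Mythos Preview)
Your proposal is correct and follows exactly the approach the paper intends: the corollary is stated without proof, merely as a consequence of Theorem \ref{thm: global existence KS} and Proposition \ref{Prop:mass}, and your argument is precisely the natural adaptation of the mass-comparison proof of Theorem \ref{thm: global existence KS}, replacing the unconditional inequality $M_f\le 4\pi r\,a[f]$ by the assumed bound $M_f\le \lambda r A^*$ to obtain a supersolution $Cr^m$ with $m\in(1,3-\lambda/(4\pi)]$. The key sign computation $L_L(r^m)=mr^{m-2}[(3-m)A^*-M_f/(4\pi r)]$ and the observation that $\lambda<8\pi$ is exactly what allows $m>1$ (hence $L^{q}$ control with $q>3/2$) are both correct.
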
   
	   
\section{Appendix}

  \begin{proof}[Proof of Proposition \ref{prop: preservation of radiality}] The radial symmetry of any solution $f$ to \eqref{eqn: approximate Linear Cauchy problem} follows by the uniqueness property of \eqref{eqn: approximate Linear Cauchy problem} and by the fact that $Q(g,f)$ commutes with rotations, as shown below. We first rewrite the collision operator as 
  \begin{equation*}
    Q(g,f)= \dive(A[g]\nabla f-f\nabla a[g])-(1-\alpha)fg = a[g]\Delta f -  \dive(\tilde{A}[g]\nabla f) + \alpha f g ,
  \end{equation*}
    with $ \tilde{A}[g]\nabla f := \int \frac{g(|v-y|)}{|y|^3} \langle \nabla f(v),y\rangle y \;dy$. 
    
    Let $\mathbb{T}$ be a rotation operator. Since $g$ is radially symmetric, so is $a[g]$. Hence   
  \begin{equation*}
    a[g]\Delta (f\circ \mathbb{T}) = a[g\circ  \mathbb{T}]\Delta (f\circ \mathbb{T}) = (a[g]\circ  \mathbb{T})(\Delta f\circ \mathbb{T})=(a[g] \Delta f ) \circ \mathbb{T},
  \end{equation*}
  taking into account that the Laplacian operator commutes with rotations. Moreover 
    
   \begin{align*}
   \dive ( \tilde{A}[g]\nabla f(\mathbb{T}v)) &= \dive \left(  \int \frac{g(|v-y|)}{|y|^3} \langle \nabla f(\mathbb{T}v),y\rangle y \;dy \right) \\
   &=  \dive \left(  \int \frac{g(|v-y|)}{|y|^3} \langle \mathbb{T}^* \nabla_z f(z)_{|_{z=\mathbb{T}v}} ,y\rangle y \;dy \right)  \\
   & =  \dive \left(  \int \frac{g(|\mathbb{T}(v-y)|)}{|y|^3} \langle  \nabla_z f(z)_{|_{z=\mathbb{T}v}} ,\mathbb{T}y\rangle \mathbb{T}^*\mathbb{T}y \;dy \right)  \\
  & = \dive \left( \mathbb{T}^* \underbrace{\int \frac{g(|\mathbb{T}v-y)|)}{|y|^3} \langle  \nabla_z f(z)_{|_{z=\mathbb{T}v}} ,y\rangle y \;dy}_{=:V(\mathbb{T}v)} \right)  \\
  &= Tr(\mathbb{T}^* Jac(V)_{|_{z=\mathbb{T}v}} \mathbb{T}) + \underbrace{\nabla (Tr(\mathbb{T}^*))}_{=0}\cdot V(\mathbb{T}v)\\
   &= Tr(\mathbb{T}\mathbb{T}^* Jac(V)_{|_{z=\mathbb{T}v}} ) \\
  &= Tr({\mathbb{I}d} \;Jac(V)_{|_{z=\mathbb{T}v}} ) \\
  &=  \dive \left(  \int \frac{g(|z-y|)}{|y|^3} \langle  \nabla_z f(z) ,y\rangle y \;dy \right)  \circ \mathbb{T}.
    \end{align*}
    Hence $Q(g,f(\mathbb{T}v)) = Q(g,f)  \circ \mathbb{T}$.\\
    Now we rewrite the linear equation (\ref{eqn: approximate Linear Cauchy problem}) in spherical coordinates:
    \begin{equation}\label{eq_rad_symm}
              \partial_t f = A^*\partial_{rr}f +\tfrac{a-A^*}{r}\partial_rf + fg,
        \end{equation}
        with  $A^*[g](v) := (A[g](v)\hat v,\hat v)$, $ \hat v := \frac{v}{|v|}$ and differentiate (\ref{eq_rad_symm}) with respect to $r$. The function $w:= \partial_r f$ satisfies the following inequality:
        \begin{equation*}
              \partial_t w \le A^*\partial_{rr}w +\tfrac{a-A^*}{r}\partial_rw + wg+ \partial_r A^* \partial_r w +\partial_r \left (\tfrac{a-A^*}{r}\right) w.
        \end{equation*}
       If  $w(\cdot,0) \leq 0$ it follows from maximum principle that $w(\cdot,t)\leq 0$ for all $t\geq 0$. In other words, the (negative) sign of $\partial_r f$ is preserved in time.
        
    \end{proof}

    \begin{proof}[Proof of Proposition \ref{prop: Landau in radial coordinates}] The identity \eqref{a_symm} is a classical and a proof can be found in \cite{LieLos2001}[Section 9.7]. To prove \eqref{A_symm},  let $v \in \mathbb{R}^3$ non-zero, $r:= |v|$, then
    \begin{equation*}
         (A[g](v)\hat v,\hat v) = \frac{1}{8\pi}\int_{\mathbb{R}^3} \frac{1}{|v-w|}g(w)\left ( (\Id-\tfrac{v-w}{|v-w|}\otimes \tfrac{v-w}{|v-w|})\hat v,\hat v \right )\;dw.
    \end{equation*}
    Note that
    \begin{equation*}
        \left ( (\Id-\tfrac{v-w}{|v-w|}\otimes \tfrac{v-w}{|v-w|})\hat v,\hat v \right ) = 1-\cos(\hat \theta(w))^2,
    \end{equation*}
    where $\hat \theta$ denotes the angle between $w-v$ and $v$. Consider, for $0\leq t,r$, the function
    \begin{equation*}
        I(r,t) := \int_{\partial B_t} \frac{1-\cos(\hat \theta)^2}{|v-w|}\;dw.
    \end{equation*}     
    The function $I(r,t)$ encodes all the information about $A^*$. In particular, integration in spherical coordinates yields the expression
    \begin{equation*}
        A^*[h](v) = \frac{1}{8\pi}\int_0^\infty f(t)I(|v|,t)\;dt.
    \end{equation*}
    As it turns out, $I(r,t)$ has rather different behavior according to whether $r<t$ or not. By averaging in the $v$ variable, it is not hard to see that
    \begin{equation*}
        I(r,t) = \frac{t^2}{r^4}I(t,r),\;\; \forall\; r<t.
    \end{equation*}                   
    Accordingly, we focus on $I(r,t)$ when $r>t$. To do so, denote by $\theta$ the angle between $w$ and $v$ and observe that    
    \begin{equation*}
        1-\cos(\hat \theta)^2  = \sin (\hat \theta)^2 = \frac{t^2-t^2\cos(\theta)^2}{|v-w|^2} =  \frac{t^2-w_1^2}{|v-w|^2},
    \end{equation*}                     
    where $w_1 = (w,\hat v)$. Thus,
    \begin{align*}
        I(r,t) & = \int_{\partial B_t} \frac{t^2-w_1^2}{|v-w|^3}\;dw\\
               & = \int_{\partial B_t} \frac{t^2-w_1^2}{(t^2-w_1^2+(r-w_1)^2)^{3/2}}\;dw\\
               & = \int_{\partial B_t} \frac{t^2-w_1^2}{(t^2-2rw_1+r^2)^{3/2}}\;dw\\               
               & = \int_{\partial B_1} \frac{t^2(1-z_1^2)}{t^3(1-2(\tfrac{r}{t})z_1+(\tfrac{r}{t})^2)^{3/2}}\;t^2dz\\
               & = \int_{\partial B_1} \frac{1-z_1^2}{(1-2(\tfrac{r}{t})z_1+(\tfrac{r}{t})^2)^{3/2}}\;tdz.                                      
    \end{align*}
    This surface integral can be written entirely as an integral in terms of the variable $z_1\in(-1,1)$, 
    \begin{align*}
        I(r,t) & = 2\pi t\int_{-1}^1 \frac{1-z_1^2}{(1-2(\tfrac{r}{t})z_1+(\tfrac{r}{t})^2)^{3/2}}\;dz_1.
    \end{align*}
    For brevity, set for now $s=r/t$, then
    \begin{align*}
        \int_{-1}^1 \frac{1-z_1^2}{(1-2sz_1+s^2)^{3/2}}dz_1 & = \frac{-2s^4+2s^3+2s-2}{3s^3\sqrt{s^2-2s+1}}-\frac{-2s^4-2s^3-2s-2}{3s^3\sqrt{s^2+2s+1}}\\
                                                            & = \frac{-2s^4+2s^3+2s-2}{3s^3(s-1)}-\frac{-2s^4-2s^3-2s-2}{3s^3(s+1)}\\
                                                            & = \frac{-2s^4+2s^3+2s-2}{3s^3(s-1)}+\frac{2s^4+2s^3+2s+2}{3s^3(s+1)}.
    \end{align*}
    Furthermore,
    \begin{align*}
        \frac{-2s^4+2s^3+2s-2}{3s^3(s-1)}+\frac{2s^4+2s^3+2s+2}{3s^3(s+1)} & = \frac{2}{3s^3}\left (\frac{-s^4+s^3+s-1}{s-1}+\frac{s^4+s^3+s+1}{s+1} \right )\\
                                                                           & = \frac{2}{3s^3} \frac{(-s^4+s^3+s-1)(s+1)+(s^4+s^3+s+1)(s-1)}{s^2-1}\\
                                                                           & = \frac{2}{3s^3} \frac{2s^2-2 }{s^2-1} = \frac{4}{3s^3}.
    \end{align*}
    Then, since $s=r/t$, we conclude that
    \begin{align*}
        I(r,t) & = 8\pi\frac{t^4}{3r^3},\;\;\text{ for } t<r,\\
        I(r,t) & = 8\pi\frac{1}{3t},\;\;\;\;\text{ for } t>r.
    \end{align*}
    Going back to $A^*[h]$, the above leads to     
    \begin{align*}
        A^*[h](v) & = \int_0^r h(t)I(r,t)\;dt+\int_r^\infty h(t)I(r,t)\;dt\\
                  & = \frac{1}{3r^3}\int_0^r h(t)t^4\;dt+\frac{1}{3}\int_r^\infty h(t) t\;dt.
    \end{align*}

    \end{proof} 

\begin{proof}[Proof of Lemma \ref{lem: mass lower bound}] 
	
	This argument is inspired by the one in Section 2.6 in \cite{KriStr2012}.  For $\beta,R,r$ (with $0<r<R$, $0<\beta$) consider the function
	\begin{equation*}
	  \Phi(v,t) := e^{-\beta t}(|v|-R)^2(|v|-r)^2.
	\end{equation*}
	Since $\Phi$ is a $C^{1,1}$ function with compact support, it holds
    \begin{align*}
        \frac{d}{dt}\int_{\mathbb{R}^3}f(v,t)\Phi(v)\;dv & = -\int_{\mathbb{R}^3} (a\nabla f-f\nabla a,\nabla \Phi)\;dv\\
                                                                 & = \int_{\mathbb{R}^3}f \dive(a\nabla \Phi)\;dv+\int_{\mathbb{R}^3}f(\nabla a,\nabla \Phi)\;dv.
                                                                     \end{align*}
    Hence
    \begin{align*}
        \dive(a\nabla \Phi)+(\nabla a,\nabla \Phi) & = a\Delta \Phi +2(\nabla a,\nabla \Phi)\\ 
                                                   & = a\Phi''+\frac{2}{|v|}\left (a+|v|a'\right) \Phi'\\
                                                   &=a\Phi''+ \frac{2}{|v|}\Phi'\int_{|v|}^{+\infty}s f(s,t)\;ds.
    \end{align*}
       It holds:
    \begin{align*}
         \Phi'(s) &  = 2(R-s)(s-r)(-(s-r)+R-s) =2(R-s)(s-r)(R+r-2s),\\
         \Phi''(s) & = 2(R-s)(r+R-2s)-2(s-r)(r+R-2s)-4(R-s)(s-r),\\
         \Phi'(r)& =\Phi'(R)=0,\;\;\Phi''(r)=\Phi''(R)=2(R-r)^2,\\
            |\Phi''|,&\;|\Phi'| \leq C_{\delta,r,R}\Phi, \quad |v|\in ((1+\delta)r,(1-\delta)R).
\end{align*}
   
    Hence in a small neighborhood of $|v|=R$ and $|v|=r$ one can show that $ \frac{d}{dt}\int_{\mathbb{R}^3}f(v,t)\Phi(v)\;dv \ge0$; more precisely it holds
    \begin{equation*}
        \dive(a\nabla \Phi)+(\nabla a,\nabla \Phi) \geq 0 \text{ in } B_R\setminus B_{(1-\delta)R} \cup B_{(1+\delta)r}\setminus B_r.
    \end{equation*}
  Since $ a[g](v)  \leq \frac{\|g\|_{L^1(\R^3)}}{|v|}$, it follows
       
    \begin{align*}
        \frac{d}{dt}\int_{\mathbb{R}^3}f(v,t)\Phi(v)\;dv & \geq -C_{\delta,r,R}\frac{\|g\|_{L^1(\R^3}}{r}\int_{B_{(1-\delta)R} \setminus B_{(1+\delta)r}} f(v,t)\Phi(v)\;dv \\
                                                         & \geq -\frac{\|g\|_{L^1(\R^3)}}{r}C_{\delta,r,R}\int_{\mathbb{R}^3}f(v,t)\Phi(v)\;dv.
    \end{align*}
    This above differential inequality implies 
    
    \begin{equation*}
        \int_{\mathbb{R}^3} f(v,t)\Phi(v)\;dv \geq e^{-\beta T}\int_{\mathbb{R}^3}\fin \Phi(v)\;dv,\;\;\forall\;t<T,
    \end{equation*}        
    where $\beta = C_{r,R,\alpha} \|g\|_{L^1} $. Finally, since
    \begin{equation*}
        \Phi(v)\leq \frac{1}{4}(R-r)^2\;\text{ in } B_R\setminus B_r\;,\;\; \Phi(v)\geq \frac{R^2r^2}{4}, 
    \end{equation*}
         we conclude that
    \begin{equation*}
        \int_{B_R\setminus B_r} f(v,t)\;dv \geq \frac{R^2r^2}{(R-r)^4}e^{-\beta T}\int_{B_{R/2}\setminus B_{2r}}\fin(v) \Phi(v)\;dv,\;\;\forall\;t<T.        
    \end{equation*} 
    \end{proof}


\bibliography{landaurefs}
\bibliographystyle{plain}

\end{document}